\newtheorem{theorem}{Theorem}
\newtheorem{theorem*}{Theorem}
\newtheorem{lemma}{Lemma}
\newtheorem{proposition}[theorem]{Proposition}
\newtheorem{definition}{Definition}
\newtheorem{corollary}{Corollary}
\newtheorem{remark}{Remark}
\newcommand{\RR}{\mathbb{R}}
\newcommand{\CC}{\mathbb{C}}
\newcommand{\NN}{\mathbb{N}}
\newcommand{\ZZ}{\mathbb{Z}}
\newcommand{\HH}{\mathbb{H}}
\numberwithin{equation}{section}
\numberwithin{theorem}{section}
\numberwithin{lemma}{section}
\numberwithin{example}{section}
\begin{document}
    
\title[Fractional Caffarelli-Kohn-Nirenberg type inequalities on $\HH^n$]{Fractional Caffarelli-Kohn-Nirenberg type inequalities on the Heisenberg group}

    \author{Rama Rawat}
    \email{rrawat@iitk.ac.in}

    \author{Haripada Roy}
    \email{haripada@iitk.ac.in}

    \author{Prosenjit Roy}
    \email{prosenjit@iitk.ac.in}
    
    \address{Indian Institute of Technology Kanpur, India}

    \keywords{Heisenberg group; Fractional order Sovolev spaces; Hardy and Sobolev inequalities; Caffarelli-Kohn-Nirenberg inequality}
    \subjclass[2023]{46E35; 26D15; 43A80; 49J52}
    \date{}

    \smallskip
\begin{abstract}
The aim of this work is to establish some cases of the  Caffarelli-Kohn-Nirenberg inequalities on the Heisenberg group for the fractional Sobolev spaces. Here we work with the fractional Sobolev spaces as given by Adimurthi and Mallick in \cite{Adi-Mallick}. Our inequalities also give an improvement on the range of indices for the Hardy type inequality established in \cite{Adi-Mallick}.

    \end{abstract}
    
    \maketitle
    

\section{Introduction}
 In \cite{CKN}, L. Caffarelli, R. Kohn and L. Nirenberg established a family of interpolation inequalities on the Euclidean space, now known as, Caffarelli-Kohn-Nirenberg (CKN) inequalities. The Hardy and Sobolev inequalities are included as particular cases of these CKN inequalities. Later analogous versions of Hardy-Sobolev inequalities were obtained in \cite{Hitchhiker,Dyda,Ground-state,Mazya-S} in the setting of fractional Sobolev spaces on $\RR^n$, in \cite{Adi-Mallick} on the Heisenberg group and in \cite{Suragan-Hardy} for homogeneous spaces.\smallskip

In this paper we establish some cases of the fractional Caffarelli-Kohn-Nirenberg (CKN) inequalities on the Heisenberg group. We will be working with the fractional Sobolev spaces as defined by Adimurthi and Mallick in \cite{Adi-Mallick} which are more natural when the index $p$ is not necessarily $2.$ This differs from the fractional Sobolev spaces considered in \cite{Suragan-Hardy,Suragan-Ruz} and the results therein for more general homogeneous spaces including the Heisenberg group.
As in the Euclidean case, our version of CKN fractional inequalities contains in particular fractional Hardy's inequality and Sobolev inequality as proved in \cite{Adi-Mallick}. Further, we work with decomposition of the Heisenberg group into sets which allow us to estimate required integrals, circumventing the use of extension domains. We are not aware if half spaces or the domains which appear in our proofs are extension domains for the fractional Sobolev spaces we are considering in this paper. Our results also improve on range of indices for which the fractional Hardy's inequality holds.\smallskip

We begin by setting notations and recalling results which we need in the sequel. Let $n\geq1$, $p>1$, $s\in(0,1)$ and $\eta,\,\eta_1,\,\eta_2\in \RR$ with $\eta_1+\eta_2=\eta$. For any measurable subset $\Omega\subset \RR^n$, we define the fractional Sobolev space $W^{s,p,\eta}(\Omega)$ as
$$W^{s,p,\eta}(\Omega):=\left\{u\in L^p(\Omega): [u]_{W^{s,p,\eta}(\Omega)} :=\left(\int_{\Omega}\int_{\Omega}\frac{|x|^{\eta_1p}|y|^{\eta_2p}|u(x)-u(y)|^p}{|x-y|^{n+sp}}dxdy\right)^{\frac{1}{p}}<\infty\right\},$$
endowed with the norm 
$$\lVert u\rVert_{W^{s,p,\eta}(\Omega)}=\lVert u\rVert_{L^p(\Omega)}+[u]_{W^{s,p,\eta}(\Omega)}.$$
When $\eta_1=\eta_2=\eta=0$, we denote the quantities $\lVert u\rVert_{W^{s,p,0}(\Omega)}$ and $[u]_{W^{s,p,0}(\Omega)}$ by $\lVert u\rVert_{W^{s,p}(\Omega)}$ and $[u]_{W^{s,p}(\Omega)}$ respectively.\smallskip

Let $p>1$, $0<s<1$, $q\geq1$, $\tau>0$, $0\leq a\leq 1$ and $\beta,$ $\gamma,$ $\eta,$ $ \eta_1,$ $\eta_2$ $\in\RR$ be such that $\eta_1+\eta_2=\eta$ and
\begin{equation}\label{condition 1}
\frac{1}{\tau}+\frac{\gamma}{n}=a\left(\frac{1}{p}+\frac{\eta-s}{n}\right)+(1-a)\left(\frac{1}{q}+\frac{\beta}{n}\right).
\end{equation}
When $a>0$, we assume the following additional conditions
\begin{equation}\label{condition 2}
\gamma=a\sigma+(1-a)\beta,\ \ 0\leq \eta-\sigma
\end{equation}
and 
\begin{equation}\label{condition 3}
\eta-\sigma\leq s,\ \mathrm{when}\ \ \frac{1}{\tau}+\frac{\gamma}{n}=\frac{1}{p}+\frac{\eta-s}{n}.
\end{equation}
The following fractional Caffarelli-Kohn-Nirenberg inequalities were proved by Hoai-Minh Nguyen and Marco Squassina in \cite{frac-CKN}.
\begin{theorem*}\emph{\cite{frac-CKN}}\label{frac CKN}
Let $n,p,s,q,\tau,a, \beta,\gamma,\eta_1,\eta_2$ and $\eta$ be as above satisfying the conditions \eqref{condition 1}, \eqref{condition 2} and \eqref{condition 3}.\\
\emph{(i)} If $\frac{1}{\tau}+\frac{\gamma}{n}>0$, then we have
\begin{equation}\label{eq frac CKN a}
\lVert|x|^{\gamma}u\rVert_{L^{\tau}(\RR^n)}\leq C [u]^a_{W^{s,p,\eta}(\RR^n)} \lVert|x|^{\beta}u\rVert^{1-a}_{L^{q}(\RR^n)},\ \forall u\in C_c^1(\RR^n).
\end{equation}
\emph{(ii)} If $\frac{1}{\tau}+\frac{\gamma}{n}<0$, then we have
\begin{equation}\label{eq frac CKN b}
\lVert|x|^{\gamma}u\rVert_{L^{\tau}(\RR^n)}\leq C [u]^a_{W^{s,p,\eta}(\RR^n)} \lVert|x|^{\beta}u\rVert^{1-a}_{L^{q}(\RR^n)},\ \forall u\in C_c^1(\RR^n\setminus \{0\}).
\end{equation}
\end{theorem*}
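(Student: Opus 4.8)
The plan is to reduce the inequality, via scaling together with Hölder's inequality, to two endpoint cases: a fractional Hardy inequality and a weighted fractional Sobolev inequality. First I would record the homogeneity: under the dilation $u\mapsto u(\lambda\,\cdot)$ each of $\||x|^{\gamma}u\|_{L^{\tau}(\RR^n)}$, $[u]_{W^{s,p,\eta}(\RR^n)}$ and $\||x|^{\beta}u\|_{L^{q}(\RR^n)}$ picks up an explicit power of $\lambda$, and \eqref{condition 1} is exactly the relation making these powers cancel on the two sides of \eqref{eq frac CKN a}--\eqref{eq frac CKN b}; hence $C$ may be sought independent of scale and \eqref{condition 1} is forced. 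The case $a=0$ being elementary (it then amounts to the weighted bound $\||x|^{\gamma}u\|_{L^{\tau}}\le C\||x|^{\beta}u\|_{L^{q}}$, which under \eqref{condition 1} forces $\tau=q$, $\gamma=\beta$ and is trivial), I would assume $0<a\le1$.

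For $0<a<1$ I would interpolate between the endpoints $a=1$ and $a=0$. By \eqref{condition 2}, pointwise $|x|^{\gamma}|u|=\bigl(|x|^{\sigma}|u|\bigr)^{a}\bigl(|x|^{\beta}|u|\bigr)^{1-a}$, so Hölder's inequality with the split $\tfrac1\tau=\tfrac a{\tau_1}+\tfrac{1-a}q$ gives
\[
\||x|^{\gamma}u\|_{L^{\tau}(\RR^n)}\le\||x|^{\sigma}u\|_{L^{\tau_1}(\RR^n)}^{a}\,\||x|^{\beta}u\|_{L^{q}(\RR^n)}^{1-a}.
\]
A short computation shows that the $\tau_1$ singled out by $\tfrac1{\tau_1}+\tfrac\sigma n=\tfrac1p+\tfrac{\eta-s}n$ is exactly the one making this split compatible with \eqref{condition 1}, and that the resulting Hölder exponents are admissible. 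So it suffices to prove the $a=1$ statement $\||x|^{\sigma}u\|_{L^{\tau_1}(\RR^n)}\le C[u]_{W^{s,p,\eta}(\RR^n)}$ (with $0\le\eta-\sigma$ by \eqref{condition 2}). The dichotomy between (i) and (ii) — test functions in $C_c^1(\RR^n)$ versus $C_c^1(\RR^n\setminus\{0\})$ — mirrors the sign of $\tfrac1\tau+\tfrac\gamma n$: a positive value makes the relevant negative power of $|x|$ locally integrable against bounded functions, a negative value forces the test functions to vanish near $0$, and the borderline value $0$ is the critical Hardy exponent, explaining its exclusion.

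It then remains to prove the $a=1$ inequality $\||x|^{\gamma}u\|_{L^{\tau}(\RR^n)}\le C[u]_{W^{s,p,\eta}(\RR^n)}$ with $\tfrac1\tau+\tfrac\gamma n=\tfrac1p+\tfrac{\eta-s}n$. In the range $0\le\eta-\gamma\le s$ (which contains the critical Hardy--Sobolev line) it interpolates between the fractional Hardy inequality $\||x|^{\eta-s}u\|_{L^{p}(\RR^n)}\le C[u]_{W^{s,p,\eta}(\RR^n)}$, corresponding to $\eta-\gamma=s$, and the weighted fractional Sobolev inequality $\||x|^{\eta}u\|_{L^{p^{\ast}_{s}}(\RR^n)}\le C[u]_{W^{s,p,\eta}(\RR^n)}$ with $p^{\ast}_{s}=\tfrac{np}{n-sp}$, corresponding to $\eta-\gamma=0$: I would recover the intermediate cases by another application of Hölder, writing $|x|^{\gamma}|u|=\bigl(|x|^{\eta-s}|u|\bigr)^{\theta}\bigl(|x|^{\eta}|u|\bigr)^{1-\theta}$ with $\theta=\tfrac{\eta-\gamma}{s}$ — the requirement $\theta\in[0,1]$ being precisely \eqref{condition 2}--\eqref{condition 3} at $a=1$ — and checking via \eqref{condition 1} that the exponents $\tfrac p{\theta\tau}$, $\tfrac{p^{\ast}_{s}}{(1-\theta)\tau}$ are conjugate and $\ge1$ (if $sp\ge n$ the Sobolev endpoint is replaced by the corresponding $L^r$ or Hölder-continuity estimate and treated similarly). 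The Hardy endpoint I would prove by the ground-state substitution $w=|x|^{\eta-s}u$, rewriting $[u]_{W^{s,p,\eta}(\RR^n)}$ in terms of $w$ and estimating the resulting bilinear form, as in \cite{Adi-Mallick}; the weighted Sobolev endpoint by a dyadic annular decomposition $\RR^n=\bigcup_{k\in\ZZ}A_k$, $A_k=\{2^k\le|x|<2^{k+1}\}$: on each $A_k$ replace the weights by $2^{k\eta_1},2^{k\eta_2},2^{k\eta}$, rescale to the unit annulus to apply the unweighted fractional Sobolev embedding with a $k$-independent constant, and sum over $k$ using $\ell^p(\ZZ)\hookrightarrow\ell^{p^{\ast}_{s}}(\ZZ)$, the bounded overlap of dilated annuli, and the exact cancellation of the powers of $2^k$ guaranteed by \eqref{condition 1}.

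The main obstacle is this last step, because the Gagliardo double integral does not localize to annuli: decomposing it leaves a long-range part
\[
\sum_{k\in\ZZ}\ \sum_{|j-k|\ge 2}\ \int_{A_k}\int_{A_j}\frac{|x|^{\eta_1 p}|y|^{\eta_2 p}\,|u(x)-u(y)|^p}{|x-y|^{n+sp}}\,dx\,dy,
\]
which must be controlled using $|x-y|\gtrsim\max(|x|,|y|)$ together with $|u(x)-u(y)|^p\lesssim|u(x)|^p+|u(y)|^p$ so as to reduce it to weighted $L^p$ norms of $u$ alone — these being closed off either by a bootstrap or directly by the fractional Hardy inequality — and one must also ensure that the zeroth-order term produced by the local Sobolev embedding on each annulus can be absorbed, which is exactly where the constraint $0\le\eta-\gamma\le s$, hence \eqref{condition 3}, is genuinely used. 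Everything else — the scaling bookkeeping and the two Hölder interpolations — is routine once these weighted endpoint inequalities are available.
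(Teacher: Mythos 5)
First, on provenance: the paper does not prove this statement at all --- it is quoted as background from Nguyen and Squassina \cite{frac-CKN}, and the only argument of this kind actually carried out in the paper is the proof of its Heisenberg analogue, Theorem \ref{frac CKN in H^n}, which follows the \cite{frac-CKN}/\cite{CKN} template: a dyadic decomposition, a rescaled local Poincar\'e--Sobolev inequality on each piece (Lemma \ref{lemma 3.2}), and a telescoping recursion on the averages $(f)_{\mathscr{B}_{k,j}}$ driven by Lemma \ref{lemma 4.1}, whose direction of iteration is dictated by the sign of $\frac{1}{\tau}+\frac{\gamma}{n}$. Measured against that, your outline has a genuine gap in its very first reduction. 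The H\"older step taking $0<a<1$ to $a=1$ produces the endpoint $\lVert |x|^{\sigma}u\rVert_{L^{\tau_1}(\RR^n)}\le C[u]_{W^{s,p,\eta}(\RR^n)}$ with $\frac{1}{\tau_1}+\frac{\sigma}{n}=\frac{1}{p}+\frac{\eta-s}{n}$, which by construction sits on the critical line, where the admissibility condition is $\eta-\sigma\le s$. But \eqref{condition 3} imposes $\eta-\sigma\le s$ only when the \emph{original} exponents are critical; off that line the hypotheses allow $\eta-\sigma>s$ (only $0\le\eta-\sigma$ is required), and there your endpoint lies outside the range your interpolation between Hardy and Sobolev can reach --- indeed, exactly as in the classical case, the supercritical one-norm inequality is expected to fail, so the theorem in that regime cannot be obtained by passing through $a=1$ and needs a direct argument. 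Your second interpolation is fine for the genuine $a=1$ case (there $\eta-\gamma\le s$ is guaranteed), but the chain breaks one step earlier.

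The second problem is that you have misplaced the main obstacle. The long-range part of the Gagliardo double integral is harmless in the direction needed: the seminorm sits on the large side of the inequality, so $\sum_k[u]^p_{W^{s,p,\eta}(A_k)}\le[u]^p_{W^{s,p,\eta}(\RR^n)}$ follows by simply discarding the off-diagonal blocks, and nothing ever has to be proved about them. The step that \cite{frac-CKN} and this paper actually spend their effort on is the zeroth-order term: the local embedding on each annulus leaves $|(u)_{A_k}|$, and summing $2^{k(n+\tau\gamma)}|(u)_{A_k}|^{\tau}$ over $k$ requires comparing consecutive averages and running the telescoping recursion of Lemma \ref{lemma 4.1}; this is precisely where the dichotomy between (i) and (ii) and the exclusion of $\frac{1}{\tau}+\frac{\gamma}{n}=0$ enter, not the condition $0\le\eta-\gamma\le s$ to which you attribute the absorption. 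Your alternative --- absorb the zeroth-order terms by a previously established weighted fractional Hardy inequality --- could close the Sobolev endpoint, but that Hardy endpoint is itself only asserted (``ground-state substitution as in \cite{Adi-Mallick}''), and for $p\ne 2$ with the two-sided weight $|x|^{\eta_1p}|y|^{\eta_2p}$, and in case (ii) where $\frac{1}{p}+\frac{\eta-s}{n}<0$ and test functions must avoid the origin, that substitution is not available off the shelf. So the proposal correctly handles the routine scaling and H\"older bookkeeping but leaves both load-bearing steps either unproven or outside the parameter range they are asked to cover.
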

If we consider $a=1$ and $\eta_1=\eta_2=\eta=0$ in \eqref{condition 1}, $\gamma=0$ implies $\tau=p^*=\frac{np}{n-p}$, and inequality \eqref{eq frac CKN a} gives the well known fractional Sobolev inequality proved earlier in \cite{Hitchhiker,Mazya-S}. On the other hand, the corresponding inequality for $\gamma=-s$ and $\tau=p$ is the fractional Hardy inequality obtained in \cite{Ground-state,Mazya-S}.\smallskip

Inequalities \eqref{eq frac CKN a} and \eqref{eq frac CKN b} are not true in general for the case $\frac{1}{\tau}+\frac{\gamma}{n}=0$. Instead the following weighted version holds in this case as was proved in \cite{frac-CKN}.
\begin{theorem*}\emph{\cite{frac-CKN}}
Let $n,p,s,q,\tau,a, \beta,\gamma,\eta_1,\eta_2$ and $\eta$ be as above satisfying the condition \eqref{condition 1} and $0\leq \eta-\sigma\leq s$.\\
\emph{(i)} If $\frac{1}{\tau}+\frac{\gamma}{n}=0$ and $u\in C_c^1(\RR^n)$ with supp$(u)\subset B(0,R)$, then we have
\begin{equation}\label{log case a}
\left(\int_{\RR^n}\frac{|x|^{\tau\gamma}}{\ln^{\tau}\left(4R/|x|\right)}|u|^\tau dx\right)^{\frac{1}{\tau}}\leq C [u]^a_{W^{s,p,\eta}(\RR^n)} \lVert|x|^{\beta}u\rVert^{1-a}_{L^{q}(\RR^n)}.
\end{equation}
\emph{(ii)} If $\frac{1}{\tau}+\frac{\gamma}{n}=0$ and $u\in C_c^1(\RR^n)$ with supp$(u)\cap B(0,r)=\phi$, then we have
\begin{equation}\label{log case b}
\left(\int_{\RR^n}\frac{|x|^{\tau\gamma}}{\ln^{\tau}\left(4|x|/r\right)}|u|^\tau dx\right)^{\frac{1}{\tau}}\leq C [u]^a_{W^{s,p,\eta}(\RR^n)} \lVert|x|^{\beta}u\rVert^{1-a}_{L^{q}(\RR^n)}.
\end{equation}
\end{theorem*}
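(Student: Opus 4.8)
The plan is to prove part (i) by a dyadic decomposition of the support ball into shells shrinking to the origin, and to obtain part (ii) by the mirror argument, with the shells instead exhausting $\{|x|>r\}$ and expanding to infinity; I describe (i). Write $B(0,R)\setminus\{0\}=\bigcup_{k\ge0}A_k$ with $A_k:=\{x:2^{-k-1}R<|x|\le 2^{-k}R\}$. Since the critical relation $\frac{1}{\tau}+\frac{\gamma}{n}=0$ forces $\tau\gamma=-n$, on $A_k$ one has $|x|^{\tau\gamma}\asymp(2^{-k}R)^{-n}\asymp|A_k|^{-1}$ and $\ln(4R/|x|)\asymp k+2$, so the left-hand side of \eqref{log case a} is comparable to
\[
\Big(\sum_{k\ge0}\frac{1}{(k+2)^{\tau}}\cdot\frac{1}{|A_k|}\int_{A_k}|u|^{\tau}\,dx\Big)^{1/\tau}.
\]
Everything is thereby reduced to estimating the $L^{\tau}$-averages of $u$ over the $A_k$ and summing them against the weight $(k+2)^{-\tau}$.

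The main ingredient is a scale-invariant local Caffarelli--Kohn--Nirenberg/Poincar\'e inequality on a fixed annulus. Fix $\mathcal A:=\{1/2<|y|\le1\}$ and an enlarged concentric shell $\widetilde{\mathcal A}\supset\mathcal A$ (say $\widetilde{\mathcal A}=\{1/4<|y|<2\}$), and establish that for all $w\in W^{s,p,\eta}(\widetilde{\mathcal A})$,
\[
\Big(\frac{1}{|\mathcal A|}\int_{\mathcal A}\big|w-\overline{w}_{\widetilde{\mathcal A}}\big|^{\tau}\,dy\Big)^{1/\tau}\le C\,[w]_{W^{s,p,\eta}(\widetilde{\mathcal A})}^{a}\,\big\| |y|^{\beta}w\big\|_{L^{q}(\widetilde{\mathcal A})}^{1-a},
\]
with $\overline{w}_{\widetilde{\mathcal A}}$ the average of $w$ over $\widetilde{\mathcal A}$. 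Since $\widetilde{\mathcal A}$ is bounded and bounded away from the origin, the weights $|y|^{\eta_1},|y|^{\eta_2},|y|^{\beta}$ are comparable to positive constants there, reducing this to the fractional Sobolev--Poincar\'e embedding on the bounded Lipschitz domain $\widetilde{\mathcal A}$ (target exponent any $\tau\le np/(n-sp)$ if $sp<n$, any finite $\tau$ if $sp\ge n$) followed by a H\"older interpolation to introduce the exponent $a$; here the constraint $0\le\eta-\sigma\le s$ is exactly what makes the interpolation admissible. Rescaling from $\widetilde{\mathcal A}$ onto its dilate $\widetilde A_k$ (chosen so that $\{\widetilde A_k\}_{k\ge0}$ has bounded overlap) via $w(\cdot)=u(2^{-k}R\,\cdot)$, and invoking \eqref{condition 1} to cancel the powers of $2^{-k}R$ in the leading term, yields for each $k$
\[
\Big(\frac{1}{|A_k|}\int_{A_k}|u|^{\tau}\,dx\Big)^{1/\tau}\le C\,[u]_{W^{s,p,\eta}(\widetilde A_k)}^{a}\,\big\| |x|^{\beta}u\big\|_{L^{q}(\widetilde A_k)}^{1-a}+\big|\overline{u}_{\widetilde A_k}\big|.
\]

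To dispose of the averages $\overline{u}_{\widetilde A_k}$, I would argue by telescoping: consecutive enlarged shells overlap, so $\big|\overline{u}_{\widetilde A_k}-\overline{u}_{\widetilde A_{k+1}}\big|$ is dominated by the rescaled Poincar\'e term on $\widetilde A_k\cap\widetilde A_{k+1}$, while $\mathrm{supp}(u)\subset B(0,R)$ forces $\overline{u}_{\widetilde A_0}$ to be small, so $\big|\overline{u}_{\widetilde A_k}\big|\lesssim\sum_{j\le k}[u]_{W^{s,p,\eta}(\widetilde A_j)}^{a}\big\| |x|^{\beta}u\big\|_{L^{q}(\widetilde A_j)}^{1-a}$. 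Substituting into the series and applying H\"older's inequality in the discrete variable $k$ — with a discrete Hardy inequality to absorb the cumulative sums $\sum_{j\le k}$ — reduces the whole estimate to the convergence of a numerical series of the form $\sum_{k\ge0}(k+2)^{-\tau r'}$ with conjugate exponent $r'>1$ fixed by \eqref{condition 1}; this converges precisely because of the logarithmic weight and would diverge ($\sum_k 1=\infty$) without it, which is the well-known failure of \eqref{eq frac CKN a}--\eqref{eq frac CKN b} at $\frac{1}{\tau}+\frac{\gamma}{n}=0$. Finally the bounded-overlap property of $\{\widetilde A_k\}$ recombines $\sum_{k}[u]_{W^{s,p,\eta}(\widetilde A_k)}^{p}\le C[u]_{W^{s,p,\eta}(\RR^n)}^{p}$ and $\sum_{k}\big\| |x|^{\beta}u\big\|_{L^{q}(\widetilde A_k)}^{q}\le C\big\| |x|^{\beta}u\big\|_{L^{q}(\RR^n)}^{q}$, giving \eqref{log case a}; running the same scheme on shells expanding to infinity gives \eqref{log case b}.

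I expect the principal obstacle to be the interplay of the two \emph{global} bookkeeping steps — propagating the shell averages and recombining the local seminorms — carried out uniformly over \emph{all} scales accumulating at the origin and without recourse to an extension operator. In particular, passing from $\sum_k[u]_{W^{s,p,\eta}(\widetilde A_k)}^{p}$ to $[u]_{W^{s,p,\eta}(\RR^n)}^{p}$ requires splitting the Gagliardo double integral over $\RR^n\times\RR^n$ into a near-diagonal part (pairs $(x,y)$ in comparable shells, absorbed by the bounded overlap) and a far-diagonal part (pairs in well-separated shells, where $|x-y|\asymp\max(|x|,|y|)$ makes the kernel $|x-y|^{-n-sp}$ summable across scales); arranging that this far-diagonal interaction feeds back only into already-present lower-order terms, and making the discrete Hardy/H\"older summation close for the full admissible range of exponents, is the delicate point.
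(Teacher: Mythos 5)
This statement is quoted from \cite{frac-CKN} and the paper itself gives no proof of it; the closest internal comparison is the paper's proof of its own Heisenberg-group analogue (Theorem \ref{theorem limiting case} in Section 5, resting on Lemma \ref{lemma 3.7} and Lemma \ref{lemma 4.1}). Your outline follows essentially that same strategy: dyadic shells on which $|x|^{\tau\gamma}\asymp|A_k|^{-1}$ and $\ln(4R/|x|)\asymp k+2$, a rescaled local Poincar\'e--CKN inequality on a fixed annulus with enlargement, telescoping control of the shell averages, and a weighted discrete summation that converges precisely because of the logarithmic factor; the only cosmetic difference is that you invoke a discrete Hardy inequality where the paper absorbs the cumulative averages through the elementary inequality \eqref{eq lemma 4.1} with a $k$-dependent constant $c$, and your worry about ``far-diagonal'' pairs is unnecessary for the upper bound $\sum_k[u]_{W^{s,p,\eta}(\widetilde A_k)}^p\leq C[u]_{W^{s,p,\eta}(\RR^n)}^p$, which follows from bounded overlap alone since well-separated pairs never occur in any single $\widetilde A_k\times\widetilde A_k$.
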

Before going further, we recall the preliminaries on the Heisenberg group. The Heisenberg group $\HH^n$ is defined as
$$\HH^n:=\{\xi=(x,y,t)=(z,t):(x,y)=z\in \RR^n\times \RR^n\ \mathrm{and}\ t\in \RR\}$$
with the group law
$$\xi\circ\xi'=(x+x',y+y',t+t'+2\langle y,x'\rangle-2\langle x,y'\rangle),$$
where $\xi=(x,y,t)$, $\xi'=(x',y',t')\in \HH^n$ and $\langle\cdot,\cdot\rangle$ denotes the usual Euclidean inner product in $\RR^n$. It is easy to see that $0\in\HH^n$ is the identity element and $-\xi$ is the inverse of $\xi\in\HH^n$. A basis for the left invariant vector fields is given by
$$X_i=\frac{\partial}{\partial x_i}+2y_i\frac{\partial}{\partial t},\ \ 1\leq i\leq n,$$
$$Y_i=\frac{\partial}{\partial y_i}-2x_i\frac{\partial}{\partial t},\ \ 1\leq i\leq n,$$
$$T=\frac{\partial}{\partial t}.$$
For any $C^1$ function $u$, $\nabla_{\HH^n}(u)$ is called the sub-gradient, defined by
$$\nabla_{\HH^n}(u)=\left(X_1(u),\cdots,X_n(u),Y_1(u),\cdots,Y_n(u)\right)$$
and
$$|\nabla_{\HH^n}(u)|^2=\sum_{i=1}^{n}\left(|X_i(u)|^2+|Y_i(u)|^2\right).$$
For any $\xi=(x,y,t)=(z,t)\in\HH^n$, $d(\xi)=\left((|x|^2+|y|^2)^2+t^2\right)^{\frac{1}{4}}=\left(|z|^4+t^2\right)^{\frac{1}{4}}$ is the Koranyi-Folland non isotropic gauge and $Q=2n+2$ is the homogeneous dimension of the Heisenberg group $\HH^n$. The left-invariant Haar measure on $\HH^n$ is the Lebesgue measure on $\RR^{2n+1}.$ We define the ball centered at $\xi$ with radius $r$ in $\HH^n$ as
\begin{equation}\label{set B_r}
B_r(\xi)=\{\xi'=(z',t'):d({\xi}^{-1}\circ \xi')<r\},
\end{equation}
and the annular domain as
\begin{equation}\label{set A_r,R}
A_{r,R}(\xi)=\{\xi'=(z',t'):r<d({\xi}^{-1}\circ \xi')<R\}.
\end{equation}
We also define the following set
\begin{equation}\label{set D_r}
D_r(\xi)=\{\xi'=(z',t'):|z'-z|<r\ \mathrm{and}\ \ |t'-t-2\langle y,x'\rangle+2\langle x,y'\rangle|<r^2\}.
\end{equation}

Various versions of Hardy's inequality on $\HH^n$ were proved by several authors in last two decades. The following inequality is one of the general version of the same, proved by L. D'Ambrozio in \cite{D'Ambrozio}:
\begin{equation}\label{eq D'Ambro}
\int_{\Omega}\frac{|z|^\beta}{d(\xi)^\alpha}|u(\xi)|^pd\xi\leq C_{n,p,\alpha,\beta}^p\int_{\Omega}|\nabla_{\HH^n}(u)|^p |z|^{\beta-p}d(\xi)^{2p-\alpha},\ \forall u\in C_c^{\infty}(\Omega),
\end{equation}
where $\Omega$ is an open subset of $\HH^n$, $n\geq1$, $p>1$, $\alpha,\beta\in\RR$ satisfy $Q>\alpha-\beta$ and $2n>p-\beta$ and $C_{n,p,\alpha\beta}=\frac{p}{Q+\beta-\alpha}$ is the optimal constant.\smallskip

Inequality \eqref{eq D'Ambro} with $\alpha=4$ and $\beta=2$ was proved in \cite{Garofalo} for the domain $\HH^n\setminus\{0\}$. Whereas \eqref{eq D'Ambro} was proved for the domain $\HH^n$ with $\alpha=2p$ and $\beta=p$ in \cite{Yong-Wang}. Adimurthi and Sekar proved \eqref{eq D'Ambro} with $\alpha=2p$ and $\beta=2$, for $u\in FS_0^{1,p}(\HH^n)$ in \cite{Adi-Sekar}, where the space $FS_0^{1,p}(\HH^n)$ is the completion of $C_c^{\infty}(\HH^n)$ under the norm
$$|u|^p_{1,p}=\int_{\HH^n}\frac{|\nabla_{\HH^n}(u)|^p}{|z|^{p-2}}dzdt.$$\smallskip

In \cite{Han}, Yazhou Han established Caffarelli-Kohn-Nirenberg type inequalities for $\HH^n$. Further, D. Suragan and his coauthors had extended some cases of CKN type inequalities on stratified Lie groups (see \cite{Suragan1,Suragan2}) and  homogeneous groups (see \cite{Suragan3}) with optimal constants.
\smallskip

For $1\leq p<\infty$, $s\in(0,1)$ and $\alpha\in\RR,$ consider the fractional order Sobolev space on $\HH^n$ as given in \cite{Adi-Mallick}:
$$W_0^{s,p,\alpha}(\HH^n)=\mathrm{Cl}\left\{f\in C_c^{\infty}(\HH^n): \int_{\mathbb{H}^n} \int_{\mathbb{H}^n}\frac{|f(\xi)-f(\xi')|^pd\xi' d\xi}{d({\xi}^{-1}\circ \xi')^{Q+ps} |z-z'|^{(p-2)\alpha}}<\infty\right\},$$
where $\xi=(z,t)$, $\xi'=(z',t').$ 
The closure is considered under the norm $\lVert f\rVert_{s,p,\alpha}^p=\lVert f\rVert_{L^p(\HH^n)}^p+[f]_{s,p,\alpha}^p$, where
$$[f]_{s,p,\alpha}=\left(\int_{\mathbb{H}^n} \int_{\mathbb{H}^n}\frac{|f(\xi)-f(\xi')|^pd\xi' d\xi}{d({\xi}^{-1}\circ \xi')^{Q+ps} |z-z'|^{(p-2)\alpha}}\right)^\frac{1}{p}.$$
Let $\Omega$ be any open set in $\HH^n$. We define the space
$$W^{s,p,\alpha}(\Omega)=\left\{f\in L^p(\Omega):[f]_{s,p,\alpha,\Omega}:=\left(\int_{\Omega} \int_{\Omega}\frac{|f(\xi)-f(\xi')|^pd\xi' d\xi}{d({\xi}^{-1}\circ \xi')^{Q+ps} |z-z'|^{(p-2)\alpha}}\right)^\frac{1}{p}<\infty\right\},$$
with the norm $\lVert f\rVert_{s,p,\alpha,\Omega}^p=\lVert f\rVert_{L^p(\Omega)}^p+[f]_{s,p,\alpha,\Omega}^p$.\smallskip

Throughout this work, we assume $(p-2)\alpha\geq 0$. We also assume $(p-2)\alpha<Q-2$, which follows from the non triviality of the space $W_0^{s,p,\alpha}(\HH^n)$ (see \cite{Adi-Mallick}, Section 3). The following Sobolev embedding theorem for $W_0^{s,p,\alpha}(\HH^n)$ is proved in \cite{Adi-Mallick}.

\begin{theorem*}\emph{\cite{Adi-Mallick}}\label{frac-sob in H^n}
Let $1\leq p<\infty$, $s\in(0,1)$ and $\alpha\in\RR$ be such that $ps+(p-2)\alpha<Q$. Then there exists a positive constant $C$, depends only on $n$, $p$, $s$ and $\alpha$, such that
\begin{equation}\label{eq frac-sob in H^n}
\lVert f\rVert_{L^{p^*}(\HH^n)}\leq C[f]_{s,p,\alpha},\  \forall f\in W_0^{s,p,\alpha}(\HH^n),
\end{equation}
where $p^*=\frac{Qp}{Q-ps-(p-2)\alpha}$ is the fractional critical Sobolev exponent.
\end{theorem*}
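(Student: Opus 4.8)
The plan is to follow the classical route to fractional Sobolev embeddings — a dyadic telescoping estimate producing a maximal–function bound, hence a weak-type inequality, and then a truncation argument to reach the strong-type statement — adapted to the Korányi geometry of $\HH^n$ and to the extra weight $|z-z'|^{-(p-2)\alpha}$ in the seminorm. Throughout, $B_r(\xi)$ denotes the Korányi ball \eqref{set B_r}; one could equally use the boxes $D_r(\xi)$ of \eqref{set D_r}, the two families being mutually comparable, and either has measure $\asymp r^{Q}$. By density it suffices to prove the inequality for $f\in C_c^\infty(\HH^n)$, and replacing $f$ by $|f|$ (which does not increase $[f]_{s,p,\alpha}$ and leaves $\|f\|_{L^{p^*}}$ unchanged) we may assume $f\ge 0$.

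First I would record the telescoping identity
\[
f(\xi)=\sum_{j\in\ZZ}\left(\,\fint_{B_{2^{j}}(\xi)}f-\fint_{B_{2^{j+1}}(\xi)}f\,\right),
\]
valid at every $\xi$ since $f$ is continuous and compactly supported (the averages converge to $f(\xi)$ as $j\to-\infty$ and to $0$ as $j\to+\infty$). Bounding each summand by the double average $\fint_{B_{2^{j+1}}(\xi)}\fint_{B_{2^{j+1}}(\xi)}|f(\eta)-f(\eta')|\,d\eta'\,d\eta$ and applying Hölder's inequality against the kernel $K(\eta,\eta')=d(\eta^{-1}\circ\eta')^{-(Q+ps)}\,|z-z'|^{-(p-2)\alpha}$, where $\eta=(z,t)$, $\eta'=(z',t')$, I would obtain, for every $r>0$,
\[
\fint_{B_{r}(\xi)}\fint_{B_{r}(\xi)}|f(\eta)-f(\eta')|\,d\eta'\,d\eta\ \le\ C\,r^{-Q/p^{*}}\left(\int_{B_{r}(\xi)}\int_{B_{r}(\xi)}|f(\eta)-f(\eta')|^{p}\,K(\eta,\eta')\,d\eta'\,d\eta\right)^{1/p}.
\]
The exponent $Q/p^{*}$ is forced here: it comes from the volume factor $|B_r|\asymp r^{Q}$ together with the crude bound $|z-z'|\le d(\eta^{-1}\circ\eta')\le Cr$ on $B_r(\xi)\times B_r(\xi)$, which is exactly what converts $Q-ps$ into $Q-ps-(p-2)\alpha$ (local integrability of $K$ near $z=z'$ uses $(p-2)\alpha<Q-2$, and $ps+(p-2)\alpha<Q$ is the condition $p^{*}>0$). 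Setting $h(\eta)^{p}=\int_{\HH^n}|f(\eta)-f(\eta')|^{p}K(\eta,\eta')\,d\eta'$, so that $\|h\|_{L^{p}(\HH^n)}=[f]_{s,p,\alpha}$, I would split the telescoping sum at a scale depending on $\xi$: the small scales are dominated by $r^{Q/p}\big(M(h^{p})(\xi)\big)^{1/p}$, where $M$ is the Hardy–Littlewood maximal operator over Korányi balls — available since $(\HH^n,d,dx)$ is a space of homogeneous type, so $M$ is of weak type $(1,1)$ and strong type $(q,q)$ for $q>1$ — while the large scales are dominated by $r^{-Q/p^{*}}[f]_{s,p,\alpha}$. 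Optimizing in $r$ yields
\[
f(\xi)\ \le\ C\,[f]_{s,p,\alpha}^{\,1-p/p^{*}}\,\big(M(h^{p})(\xi)\big)^{1/p^{*}}\qquad\text{for a.e. }\xi\in\HH^n,
\]
and the weak type $(1,1)$ bound for $M$ then gives $\|f\|_{L^{p^{*},\infty}(\HH^n)}\le C\,[f]_{s,p,\alpha}$.

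To upgrade this to the strong-type inequality I would use the truncation device. For $k\in\ZZ$ put $f_{k}:=\big(\min(f,2^{k+1})-2^{k}\big)^{+}$, a Lipschitz, compactly supported function for which the previous step applies verbatim. From $\{f>2^{k+1}\}\subseteq\{f_{k}>2^{k-1}\}$ the weak-type bound gives $2^{kp^{*}}\,|\{f>2^{k+1}\}|\le C\,[f_{k}]_{s,p,\alpha}^{p^{*}}$; summing over $k$, using the elementary inequality $\sum_{k\in\ZZ}|f_{k}(a)-f_{k}(b)|^{p}\le|a-b|^{p}$ for $a,b\ge0$ (immediate since each $f_k$ is non-decreasing and $1$-Lipschitz and $\sum_k f_k(t)=t$), which yields $\sum_{k}[f_{k}]_{s,p,\alpha}^{p}\le[f]_{s,p,\alpha}^{p}$, and the embedding $\ell^{p}\hookrightarrow\ell^{p^{*}}$ (legitimate since $p^{*}>p$), I would conclude
\[
\|f\|_{L^{p^{*}}(\HH^n)}^{p^{*}}\ \le\ C\sum_{k\in\ZZ}2^{kp^{*}}\,|\{f>2^{k+1}\}|\ \le\ C\sum_{k\in\ZZ}[f_{k}]_{s,p,\alpha}^{p^{*}}\ \le\ C\,[f]_{s,p,\alpha}^{p^{*}}.
\]
A routine density argument (if $(f_m)\subset C_c^\infty(\HH^n)$ is $\|\cdot\|_{s,p,\alpha}$–Cauchy, the inequality just proved makes it $L^{p^*}$–Cauchy, and a.e.–convergent subsequences identify the $L^{p^*}$–limit with the $L^p$–limit) then extends the bound to all of $W_0^{s,p,\alpha}(\HH^n)$ with $p^{*}=\tfrac{Qp}{Q-ps-(p-2)\alpha}$.

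The step I expect to be the main obstacle is the passage from weak type to strong type: the truncation argument is the only delicate point, and it works precisely because the \emph{nonlocal} seminorm is superadditive along the dyadic truncations $f_{k}$ — the disjoint-support ("local $p$-energy") argument familiar from first-order Sobolev spaces is not available, since $[\cdot]_{s,p,\alpha}$ sees all pairs of points. A secondary, essentially bookkeeping, difficulty is keeping track of the weight $|z-z'|^{-(p-2)\alpha}$ in the Hölder estimate under the standing assumptions $0\le(p-2)\alpha<Q-2$ and $ps+(p-2)\alpha<Q$; this is what pins down the critical exponent.
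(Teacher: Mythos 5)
Your argument is essentially correct, but it is a genuinely different route from the one the paper relies on. The paper does not prove this theorem itself: it imports it from \cite{Adi-Mallick}, and the proof there (whose machinery this paper reproduces in Section 4 as Lemmas \ref{lemma 3.3}--\ref{lemma 3.4} and sketches in the Remark following Theorem \ref{frac CKN in H^n b}) is a level-set argument in the style of the Hitchhiker's guide: one sets $E_i=\{f\geq 2^i\}$, $D_i=E_i\setminus E_{i+1}$, uses the isocapacitary-type lower bound $\int_{E^C} d(\xi^{-1}\circ\xi')^{-(Q+ps)}|z-z'|^{-(p-2)\alpha}\,d\xi'\geq C|E|^{-\frac{ps+(p-2)\alpha}{Q}}$ to get $[f]_{s,p,\alpha}^p\geq C\sum_i 2^{pi}e_i^{\frac{Q-ps-(p-2)\alpha}{Q}}$, and concludes directly with the strong-type inequality via the elementary sequence inequality \eqref{3.24}. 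Your route instead goes through a telescoping/maximal-function pointwise bound, a weak-type $(p^*,\infty)$ estimate, and the dyadic truncation $f_k=(\min(f,2^{k+1})-2^k)^+$ to upgrade to strong type. The trade-off: your approach needs the Hardy--Littlewood maximal theorem on $(\HH^n,d,d\xi)$ as a space of homogeneous type and the superadditivity $\sum_k[f_k]^p_{s,p,\alpha}\leq[f]^p_{s,p,\alpha}$, whereas the level-set route is self-contained and measure-theoretic and never passes through a weak-type estimate; both handle the weight identically, via $|z-z'|\leq d(\eta^{-1}\circ\eta')$ together with $(p-2)\alpha\geq0$, which is exactly what replaces $Q-ps$ by $Q-ps-(p-2)\alpha$ in the exponent. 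All the individual steps of your argument check out (the Hölder computation does produce exactly $r^{-Q/p^*}$, and $\sum_k|f_k(a)-f_k(b)|\leq|a-b|$ gives the superadditivity), with one small slip: the small-scale part of the telescoping sum is dominated by $r^{Q/p-Q/p^*}\bigl(M(h^p)(\xi)\bigr)^{1/p}$, not $r^{Q/p}\bigl(M(h^p)(\xi)\bigr)^{1/p}$; with the corrected exponent the optimization in $r$ yields precisely the pointwise bound $f(\xi)\leq C[f]_{s,p,\alpha}^{1-p/p^*}\bigl(M(h^p)(\xi)\bigr)^{1/p^*}$ that you state, so the rest of the proof is unaffected.
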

A fractional Hardy type inequality for the functions of $W_0^{s,p,\alpha}(\HH^n)$ has been proved in \cite{Adi-Mallick} which is as follows:
\begin{theorem*}\emph{\cite{Adi-Mallick}}\label{frac-har in H^n}
Let $1\leq p<\infty$, $s\in(0,1)$ and $\alpha\in\RR$ satisfy the following conditions:\\
\emph{(a)} $ps>2$;\\
\emph{(b)} $ps+(p-2)\alpha<Q$.\\
Then there exists a positive constant $C$, depends only on $n$, $p$, $s$ and $\alpha$, such that 
\begin{equation}\label{eq frac-har in H^n}
\int_{\HH^n}\frac{|f(\xi)|^pd\xi}{d(\xi)^{ps}|z|^{(p-2)\alpha}}\leq C[f]_{s,p,\alpha}^p,\  \forall f\in W_0^{s,p,\alpha}(\HH^n).
\end{equation}

\end{theorem*}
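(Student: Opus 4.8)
The plan is to prove the inequality by a nonlinear ground-state substitution, using a power of the Koranyi gauge as virtual ground state. By density it suffices to prove the bound for $f\in C_c^\infty(\HH^n)$, the class whose closure defines $W_0^{s,p,\alpha}(\HH^n)$: once it holds there it passes to the closure by Fatou's lemma, since convergence in $W_0^{s,p,\alpha}(\HH^n)$ entails $L^p$-convergence --- hence a.e.\ convergence along a subsequence --- together with convergence of the seminorms $[\cdot]_{s,p,\alpha}$. Write $K(\xi,\xi'):=d(\xi^{-1}\circ\xi')^{-(Q+ps)}\,|z-z'|^{-(p-2)\alpha}$ for the symmetric, nonnegative kernel appearing in $[\cdot]_{s,p,\alpha}$, fix a parameter $\beta>0$ to be chosen, put $\omega(\xi):=d(\xi)^{-\beta}$, and set $g:=f/\omega=d(\cdot)^{\beta}f$. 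The Frank--Seiringer ground-state substitution for the exponent $p\ge1$ (it uses only the symmetry and nonnegativity of $K$, so it applies verbatim to our weighted kernel) produces a nonnegative remainder together with the bound
$$[f]_{s,p,\alpha}^p \;\ge\; c_p\int_{\HH^n} |f(\xi)|^p\,W_\beta(\xi)\,d\xi,$$
where $c_p>0$ and $W_\beta(\xi):=\omega(\xi)^{1-p}\,\mathrm{p.v.}\!\int_{\HH^n}\bigl(\omega(\xi)-\omega(\xi')\bigr)^{\langle p-1\rangle}K(\xi,\xi')\,d\xi'$, with $a^{\langle q\rangle}:=|a|^{q-1}a$. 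Thus the theorem follows once I establish the pointwise lower bound $W_\beta(\xi)\ge C\,d(\xi)^{-ps}\,|z|^{-(p-2)\alpha}$ on $\HH^n$ off the $t$-axis, for some admissible $\beta$ and some $C=C(n,p,s,\alpha)>0$.

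To prove this pointwise bound I would exploit two symmetries of $W_\beta$. Under the anisotropic dilations $\delta_\lambda(z,t)=(\lambda z,\lambda^2 t)$ one has $d\circ\delta_\lambda=\lambda\,d$, $|z-z'|\circ\delta_\lambda=\lambda\,|z-z'|$ and $d(\delta_\lambda\xi)=\lambda^Q\,d\xi$, and a bookkeeping of exponents shows that $W_\beta$ and $d(\xi)^{-ps}|z|^{-(p-2)\alpha}$ are \emph{both} homogeneous of degree $-ps-(p-2)\alpha$; hence it suffices to prove $W_\beta(\xi)\ge C|z|^{-(p-2)\alpha}$ on the gauge sphere $\{d=1\}$. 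Moreover $W_\beta$ and $|z|$ are invariant under the rotations of the $z$-variable compatible with the group law and under $t\mapsto-t$, so on $\{d=1\}$ the quantity $W_\beta$ is a function of $\rho:=|z|\in(0,1]$ alone, say $I(\rho)$, and one needs $I(\rho)\ge C\rho^{-(p-2)\alpha}$ uniformly in $\rho$. Fixing such a $\xi$ and substituting $\eta=\xi^{-1}\circ\xi'$ (so that $d(\xi^{-1}\circ\xi')=d(\eta)$, $z-z'$ is minus the horizontal part of $\eta$, and $d\xi'=d\eta$ by left invariance), I would split the $\eta$-integral into three parts: (a) a near-diagonal part $\{d(\eta)\le c\rho\}$, where a second-order Taylor expansion of $\omega$ together with $s<1$ and $(p-2)\alpha<Q-2$ shows that the signed integrand is absolutely integrable and contributes a term of size $O(\rho^{-(p-2)\alpha})$; (b) an intermediate part $\{c\rho\le d(\eta)\le1/2\}$; and (c) a far part $\{d(\eta)\ge1/2\}$, where $d(\eta)\asymp d(\xi')$, the factor $(\omega(\xi)-\omega(\xi'))^{\langle p-1\rangle}$ is positive on a set of full proportion, and the hypothesis $ps+(p-2)\alpha<Q$ makes the tail converge to a value that is strictly positive for $\beta$ in a suitable interval. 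Adding the three contributions and choosing $\beta$ small enough should give $I(\rho)\ge C\rho^{-(p-2)\alpha}$ with $C>0$.

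The step I expect to be the main obstacle is precisely this pointwise estimate, and within it the uniformity as $\xi$ approaches the $t$-axis, i.e.\ $\rho\to0^+$: there the target potential $d(\xi)^{-ps}|z|^{-(p-2)\alpha}$ blows up along the axis, whereas the positive contribution from the far region (c) is only of order $1$ in $\rho$, so the needed $\rho^{-(p-2)\alpha}$ growth must be extracted from the part of the integral with $\xi'$ close to the axis, where one has to keep careful track of the cancellation of the odd-order terms in the principal value. This is exactly where the hypothesis $ps>2$ is used: near the $t$-axis the gauge behaves like $d(\xi)\asymp|t|^{1/2}$, so $d(\xi)^{-ps}\asymp|t|^{-ps/2}$ with $ps/2>1$, and $ps>2$ plays, for the axis, the role that the subcriticality $ps+(p-2)\alpha<Q$ plays at the origin, mirroring the condition $n>sp$ in the Euclidean fractional Hardy inequality. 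Everything else --- the density reduction, the ground-state substitution, and the homogeneity and symmetry reductions --- is routine. A more hands-on alternative would first replace $f$ by its average over the sets $D_{|z|}(\xi)$, which is harmless for the left-hand side because $(p-2)\alpha\ge0$ and $|z|^{ps}\le d(\xi)^{ps}$ force the resulting error to be dominated by $[f]_{s,p,\alpha}^p$, and would then run a dyadic telescoping argument; but it meets the same essential difficulty near the $t$-axis.
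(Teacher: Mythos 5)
This theorem is quoted from \cite{Adi-Mallick}; the paper does not reprove it, but the Remark closing Section 4 (together with Lemma \ref{lemma 3.3}, Lemma \ref{lemma 3.4} and Lemma \ref{lemma 3.5}) records the route taken there: a level-set decomposition $E_i=\{f\ge 2^i\}$, $D_i=E_i\setminus E_{i+1}$, the capacitary lower bound $\int_{E^C}d(\xi^{-1}\circ\xi')^{-(Q+ps)}|z-z'|^{-(p-2)\alpha}d\xi'\ge C|E|^{-(ps+(p-2)\alpha)/Q}$, a summation lemma for $e_i=|E_i|$, $d_i=|D_i|$, and a rearrangement-type upper bound for $\int_{E}d(\xi)^{-ps}|z|^{-(p-2)\alpha}d\xi$ in terms of $|E|$ --- this last step is where $ps>2$ enters (it is the condition $\tau\gamma<-2$, resp.\ $Q-2+\beta>0$, of Proposition \ref{prop 4.1} and Lemma \ref{lemma 3.5}, the ``$2$'' being the homogeneity of the $t$-direction). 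Your ground-state substitution is a genuinely different strategy, and the substitution step itself (symmetric nonnegative kernel, positive weight, nonnegative remainder) is unobjectionable. The proof, however, is not complete: all of the analytic content is deferred to the pointwise bound $W_\beta(\xi)\ge C\,d(\xi)^{-ps}|z|^{-(p-2)\alpha}$, which you only sketch, and which I do not believe holds for the gauge-radial weight $\omega=d^{-\beta}$ once $(p-2)\alpha>0$.

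Here is the concrete obstruction. After your (correct) reduction to the gauge sphere $\{d=1\}$ you need $I(\rho)\gtrsim\rho^{-(p-2)\alpha}$ as $\rho=|z|\to0$. The only possible source of this growth is the kernel factor $|z-z'|^{-(p-2)\alpha}$, which is large only on the slab $\{|z'-z|\ll1\}$; but there the weight $d^{-\beta}$ barely oscillates, since $|\nabla_{\HH^n}d|=|z|/d=\rho$ vanishes on the $t$-axis. Quantitatively, for $d(\xi^{-1}\circ\xi')\sim\delta$ one has $|\omega(\xi)-\omega(\xi')|\lesssim\delta\max(\delta,\rho)$ while $\int_{\{d(\xi^{-1}\circ\xi')\sim\delta\}}|z-z'|^{-(p-2)\alpha}\,d\xi'\sim\delta^{Q-(p-2)\alpha}$ (using $(p-2)\alpha<2n$), so the dyadic shell at scale $\delta\ge\rho$ contributes at most $C\delta^{2(p-1)-ps-(p-2)\alpha}$ and the far field contributes $O(1)$. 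Whenever $ps<2(p-1)$ --- which is compatible with (a), (b) and $0\le(p-2)\alpha<Q-2$, e.g.\ $p=4$, $s=3/5$, $n=2$, $\alpha=1$, so that $2(p-1)-ps-(p-2)\alpha=1.6>0$ --- this yields $I(\rho)=O(1)$, whereas you need $I(\rho)\gtrsim\rho^{-2}$. No choice of $\beta$ repairs this, because the degeneracy of $\nabla_{\HH^n}d$ on the axis is independent of $\beta$; one would have to build $|z|$ into the ground state (say $\omega=d^{-\beta_1}|z|^{-\beta_2}$), which creates a new singular set along the whole axis and a much harder principal-value analysis. Two further soft spots: for $p\ne2$ the map $a\mapsto a^{\langle p-1\rangle}$ is nonlinear, so ``the odd first-order term cancels in the p.v.'' is not automatic (the shells with $\delta\ll\rho$ are not even absolutely convergent in the relevant parameter range), and your heuristic that $ps>2$ governs the far-field convergence along the axis is not what that hypothesis does in \cite{Adi-Mallick} --- there it controls the weighted measure of level sets, as reflected in the condition $Q-2+\beta>0$ of Lemma \ref{lemma 3.5}.
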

Theorem \ref{frac-har in H^n} holds without the condition (a) only for the case $p=2$ or $\alpha=0$ i.e., when $(p-2)\alpha=0$ (see \cite{Adi-Mallick}, Remark 1.3.). In \cite{Suragan-Hardy} A. Kassymov and D. Suragan established a fractional Hardy type inequality on the homogeneous Lie groups, which in particular on $\HH^n$ gives \eqref{eq frac-har in H^n} with $\alpha=0$. If $(p-2)\alpha>0$, condition (a) forces $p$ to be large enough when $s$ is very small. On the other hand, non triviality of the space restricts the range of $p$. To be precise, Theorem \ref{frac-har in H^n} remains unproven for $0<s\leq 2/\left(\frac{Q-2}{\alpha}+2\right)$.\smallskip

In this  paper we establish analogous version of the inequalities \eqref{eq frac CKN a}, \eqref{eq frac CKN b}, \eqref{log case a} and \eqref{log case b} (considering $\eta_1=\eta_2=\eta=\beta=0$) for $W_0^{s,p,\alpha}(\HH^n)$, which includes Theorem \ref{frac-sob in H^n} and Theorem \ref{frac-har in H^n} as particular cases. For the case $(p-2)\alpha=0$, an analogous version of the inequalities \eqref{eq frac CKN a}-\eqref{log case b} in the Heisenberg group were established in \cite{Suragan-Ruz}. However, $(p-2)\alpha\neq0$ case is totally different from the case $(p-2)\alpha=0$. In our case the corresponding dimensional balance relation is
\begin{equation}\label{tau gamma rel}
\frac{1}{\tau}+\frac{\gamma\left(1+(p-2)\frac{\alpha}{ps}\right)}{Q}=a\left(\frac{1}{p}-\frac{s\left(1+(p-2)\frac{\alpha}{ps}\right)}{Q}\right)+\frac{1-a}{q},
\end{equation}
where $p>1$, $s\in(0,1)$, $\alpha\in\RR$, $\tau>1$, $0<a\leq 1$, $\gamma\in \RR$ and $q\geq 1$. We also assume
\begin{equation}\label{range of gamma}
0\leq -\gamma\leq as.
\end{equation}
We prove the following results.
\begin{theorem}\label{frac CKN in H^n}
Let $p>1$, $s\in(0,1)$, $\alpha\in\RR$, $\tau>1$, $0<a\leq 1$, $\gamma\in\RR$ and $q>(1-a)\tau$ satisfy \eqref{tau gamma rel} and \eqref{range of gamma}.\\
\emph{(i)} If $\frac{1}{\tau}+\frac{\gamma\left(1+(p-2)\frac{\alpha}{ps}\right)}{Q}>\frac{2}{\tau Q}$, then
\begin{equation}\label{eq frac CKN in H^n i}
\left(\int_{\HH^n}d(\xi)^{\tau\gamma}|z|^{\tau\gamma(p-2)\frac{\alpha}{ps}}|f(\xi)|^{\tau}d\xi\right)^{\frac{1}{\tau}}\leq C [f]_{s,p,\alpha}^a\lVert f\rVert_{L^q(\HH^n)}^{1-a},\ \forall\, f\in C_c^1(\HH^n).
\end{equation}
\emph{(ii)} If $\frac{1}{\tau}+\frac{\gamma\left(1+(p-2)\frac{\alpha}{ps}\right)}{Q}<\frac{2}{\tau Q}$, then
\begin{equation}\label{eq frac CKN in H^n ii}
\left(\int_{\HH^n}d(\xi)^{\tau\gamma}|z|^{\tau\gamma(p-2)\frac{\alpha}{ps}}|f(\xi)|^{\tau}d\xi\right)^{\frac{1}{\tau}}\leq C [f]_{s,p,\alpha}^a\lVert f\rVert_{L^q(\HH^n)}^{1-a},\ \forall\, f\in C_c^1(\HH^n\setminus\{\xi=(z,t):z=0\}).
\end{equation}
\end{theorem}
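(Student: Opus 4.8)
The plan is to separate the argument into an interpolation step that removes the factor $\lVert f\rVert_{L^q(\HH^n)}$ and reduces matters to an endpoint ($a=1$) inequality, followed by a dyadic analysis on $\HH^n$ that proves the endpoint while circumventing extension domains. First I would set $\kappa:=1+(p-2)\tfrac{\alpha}{ps}$ and $w(\xi):=d(\xi)\,|z|^{(p-2)\alpha/(ps)}$, so that the left-hand side of \eqref{eq frac CKN in H^n i}--\eqref{eq frac CKN in H^n ii} is $\big(\int_{\HH^n}w^{\tau\gamma}|f|^{\tau}\big)^{1/\tau}$ and the Hardy weight $d(\xi)^{ps}|z|^{(p-2)\alpha}$ in \eqref{eq frac-har in H^n} is precisely $w^{ps}$. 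Assuming $a<1$ (the case $a=1$ being the endpoint itself), I would put $\gamma_1:=\gamma/a\in[-s,0]$ (using \eqref{range of gamma}); then a direct computation shows \eqref{tau gamma rel} is equivalent to $\tfrac1\tau=\tfrac{a}{\tau_1}+\tfrac{1-a}{q}$ with $\tfrac1{\tau_1}:=\tfrac1p-\tfrac{(s+\gamma_1)\kappa}{Q}$. Factoring $w^{\tau\gamma}|f|^{\tau}=\big(w^{\gamma_1}|f|\big)^{\tau a}|f|^{\tau(1-a)}$ (legitimate since $\gamma_1\tau a=\gamma\tau$) and applying H\"older's inequality with the conjugate exponents $\tau_1/(\tau a)$ and $q/(\tau(1-a))$ --- admissible exactly because $q>(1-a)\tau$, which in turn forces $\tau a<\tau_1$ --- would reduce the theorem to the endpoint estimate $\lVert w^{\gamma_1}f\rVert_{L^{\tau_1}(\HH^n)}\le C[f]_{s,p,\alpha}$, to be proved for all $\gamma_1\in[-s,0]$, on $C_c^1(\HH^n)$ in case (i) and on $C_c^1(\HH^n\setminus\{z=0\})$ in case (ii).

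Next I would attack this endpoint inequality. For $\gamma_1=0$ it is the fractional Sobolev embedding of \cite{Adi-Mallick} recalled above, and for $\gamma_1=-s$ it is the Hardy inequality \eqref{eq frac-har in H^n}; but \eqref{eq frac-har in H^n} was proved in \cite{Adi-Mallick} only under the extra hypothesis $ps>2$, so the whole family --- the Hardy endpoint included --- must be re-established for all $s\in(0,1)$, and this is where the improved range of indices comes from. The plan is a dyadic decomposition of $\HH^n\setminus\{z=0\}$ into the annular domains $A_{2^j,2^{j+1}}(0)$ of \eqref{set A_r,R}, $j\in\ZZ$; since $d(\xi)\asymp 2^j$ on the $j$-th shell,
\[
\int_{\HH^n}w^{\tau_1\gamma_1}|f|^{\tau_1}\ \asymp\ \sum_{j\in\ZZ}2^{j\tau_1\gamma_1}\int_{A_{2^j,2^{j+1}}(0)}|z|^{\tau_1\gamma_1(p-2)\alpha/(ps)}|f|^{\tau_1}.
\]
The heart of the matter is a scale-invariant \emph{local} weighted inequality bounding each shell integral by the local Gagliardo energy of $f$ on a slightly larger shell. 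To prove it I would cover each shell by finitely many of the boxes $D_r(\xi_0)$ of \eqref{set D_r}, which factor as a Euclidean ball in the $z$-variable times an interval in the last coordinate of $\xi_0^{-1}\circ(\cdot)$; on each box I would replace $f$ by $f-\overline f$ (its average), bound $\lVert f-\overline f\rVert_{L^{\tau_1}}$ by the local energy via a fractional Poincar\'e inequality that requires only boundedness of the box --- not an extension operator --- and treat the residual $|z|$-weight by the Euclidean weighted Sobolev and Hardy inequalities applied in the $z$-slices, which is permissible because $\{z=0\}$ has homogeneous codimension $Q-2=2n$ and we have assumed $(p-2)\alpha<Q-2$; in case (ii) the hypothesis that $f$ vanishes near $\{z=0\}$ is exactly what makes the transverse Hardy step available. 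Finally I would sum these bounds over the bounded-overlap boxes inside each shell, handle the shell-averages $\overline f$ by telescoping across adjacent shells (each difference controlled by the local energy, using that $f$ has compact support), and sum over $j\in\ZZ$ --- a geometric series whose ratio is pinned down by \eqref{tau gamma rel}, combined with $\sum_j x_j^{\tau_1/p}\le(\sum_j x_j)^{\tau_1/p}$, valid since $\tau_1\ge p$ --- to reach $\lVert w^{\gamma_1}f\rVert_{L^{\tau_1}}\le C[f]_{s,p,\alpha}$.

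The dichotomy between (i) and (ii) is precisely the requirement that the target weight $d(\xi)^{\tau\gamma}|z|^{\tau\gamma(p-2)\alpha/(ps)}$ be locally integrable near the axis $\{z=0\}$ and near the origin: unwinding, this is the condition $\tfrac1\tau+\tfrac{\gamma\kappa}{Q}\gtrless\tfrac{2}{\tau Q}$, equivalently $\tfrac{Q-2}{\tau}\gtrless-\gamma\kappa$, with case (i) admitting all of $C_c^1(\HH^n)$ and case (ii) forcing $f$ to vanish near $\{z=0\}$; the borderline $\tfrac1\tau+\tfrac{\gamma\kappa}{Q}=\tfrac2{\tau Q}$ is deliberately left out here, being the regime of the logarithmic refinements \eqref{log case a}--\eqref{log case b}. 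I expect the main obstacle to be the local step above: establishing the scale-invariant weighted fractional inequality on the boxes $D_r$ with the anisotropic factor $|z-z'|^{(p-2)\alpha}$ inside the Gagliardo kernel and with no extension operator available --- in effect, proving the fractional Hardy inequality around the codimension-$(Q-2)$ set $\{z=0\}$ for the entire range $s\in(0,1)$, so as to dispense with the restriction $ps>2$ in \eqref{eq frac-har in H^n}.
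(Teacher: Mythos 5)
Your architecture (dyadic decomposition adapted to $|z|$, local Poincar\'e--Sobolev estimates on boxes to avoid extension domains, telescoping of shell averages against a geometric factor) is genuinely close to the paper's, and your reading of the dichotomy (i)/(ii) as integrability of the weight across the codimension-$(Q-2)$ axis $\{z=0\}$ is essentially right. The argument breaks, however, at the very first step: the global H\"older reduction to the $a=1$ endpoint. Writing $\kappa=1+(p-2)\frac{\alpha}{ps}$, your exponents satisfy $\tau_1=\frac{a\tau q}{q-(1-a)\tau}$ and hence $\tau_1\gamma_1=\tau\gamma\cdot\frac{q}{q-(1-a)\tau}<\tau\gamma$ whenever $a<1$ and $\gamma<0$. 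Thus the endpoint problem sits at a strictly smaller value $Q_1':=Q+\tau_1\gamma_1\kappa<Q':=Q+\tau\gamma\kappa$, and the case (i) hypothesis $Q'>2$ does \emph{not} imply $Q_1'>2$. This is not a technicality: your own telescoping scheme needs $Q_1'>2$ for the geometric sums, and more seriously the axis-integrability condition $Q-2+\tau_1\gamma_1(p-2)\frac{\alpha}{ps}>0$ can fail even though $Q-2+\tau\gamma(p-2)\frac{\alpha}{ps}>0$ follows from $Q'>2$ --- in that subrange the left-hand side of your endpoint inequality is $+\infty$ for $f\in C_c^1(\HH^n)$ not vanishing on the axis, so the reduction turns a true statement into a false one. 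Even where the endpoint left-hand side is finite, the regime $0<Q_1'\le 2$ for arbitrary $f\in C_c^1(\HH^n)$ is exactly what the paper declares open in general (it is recovered only under the extra hypotheses $\tau\gamma\le -2$ and the lower bound on $q$ of Theorem \ref{frac CKN in H^n b}). Case (ii) is not affected by this objection, since $Q_1'\le Q'<2$ does transfer.

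The repair, which is what the paper actually does, is to perform the H\"older interpolation \emph{locally} instead of globally. On each box $\mathscr{B}_{k,j}$ (size $2^k$ in $z$, $2^{2k}$ in $t$, so $|z|\sim 2^k$ and the anisotropic weight is essentially constant there) one proves the averaged estimate \eqref{1.25} of Lemma \ref{lemma 3.2} by combining H\"older between ${p'}^*=\tau_1$ and $q$ \emph{inside the box} with the local embedding of Corollary \ref{coro 1} (valid for any $K\Subset\Omega$, no extension operator needed) and a scaling in $D_\lambda\xi=(\lambda z,\lambda^2 t)$. The $L^q$ factor is then carried through the telescoping of the averages $(f)_{\mathscr{B}_{k,j}}$ across scales (via Lemma \ref{lemma 4.1} with $c=\delta 2^{Q'-2}>1$), so the global summation is governed by $2^{kQ'}$ and the hypothesis $Q'>2$ is used exactly where it is needed. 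Your plan would work if you either kept the $\lVert f\rVert_{L^q}$ factor local in this way, or strengthened the hypothesis of (i) to $Q_1'>2$ --- but as written it does not prove case (i) for $a<1$.
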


Comparing with the corresponding result in the Euclidean case, it is natural to ask if \eqref{eq frac CKN in H^n i} holds for $0<1/\tau+\gamma\left(1+(p-2)\frac{\alpha}{ps}\right)/{Q}\leq\frac{2}{\tau Q}$ as well. 
In general, the answer is not known to us, but as our next result shows, under an additional conditions on indices $\tau,\gamma$ and $q$, \eqref{eq frac CKN in H^n i} continues to hold for $1/\tau+\gamma\left(1+(p-2)\frac{\alpha}{ps}\right)/{Q}>0$.
\begin{theorem}\label{frac CKN in H^n b}
Let $p$, $s$, $\alpha$, $\tau$, $a$, $\gamma$ and $q$ are as considered in Theorem \emph{\ref{frac CKN in H^n}} and satisfy \eqref{tau gamma rel} and \eqref{range of gamma}. If $\frac{1}{\tau}+\frac{\gamma\left(1+(p-2)\frac{\alpha}{ps}\right)}{Q}>0$ and $\tau\gamma\leq-2$, then for $q>\frac{(1-a)\tau Q}{Q-2+\tau\gamma(p-2)\frac{\alpha}{ps}}$ we have
\begin{equation}\label{eq frac CKN in H^n iii}
\left(\int_{\HH^n}d(\xi)^{\tau\gamma}|z|^{\tau\gamma(p-2)\frac{\alpha}{ps}}|f(\xi)|^{\tau}d\xi\right)^{\frac{1}{\tau}}\leq C [f]_{s,p,\alpha}^a\lVert f\rVert_{L^q(\HH^n)}^{1-a},\ \forall\, f\in C_c^1(\HH^n).
\end{equation}
\end{theorem}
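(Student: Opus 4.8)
The plan is to reduce to Theorem \ref{frac CKN in H^n}(i), whose hypothesis $1/\tau+\gamma(1+(p-2)\alpha/ps)/Q>2/(\tau Q)$ fails here, by splitting the integral on the left according to the value of $|f|$ relative to its size near the singular set $\{z=0\}$, and by exploiting the extra decay coming from $\tau\gamma\le-2$. Concretely, I would first observe that under \eqref{tau gamma rel} and \eqref{range of gamma} the ``defect'' $2/(\tau Q)-\bigl(1/\tau+\gamma(1+(p-2)\alpha/ps)/Q\bigr)$ is precisely controlled by how far $\tau\gamma$ is from $-2$, and that the condition $q>(1-a)\tau Q/(Q-2+\tau\gamma(p-2)\alpha/ps)$ is exactly what is needed so that a Hölder interpolation between the $L^q$ norm and a term that Theorem \ref{frac CKN in H^n}(i) does control remains admissible. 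So the first step is to record these elementary index identities carefully.

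The core of the argument is a truncation/scaling comparison. For $f\in C_c^1(\HH^n)$ and a parameter $\lambda>0$, write $f=f_\lambda+f^\lambda$ where $f_\lambda$ captures the part of $f$ supported where $d(\xi)$ is small (near the part of the singular set hit by $\mathrm{supp}(f)$) and $f^\lambda$ the complementary part; alternatively, decompose $\HH^n=\bigcup_k A_{2^k,2^{k+1}}(0)$ into Koranyi annuli and estimate the weighted $L^\tau$ mass on each annulus. On each annulus $d(\xi)\sim 2^k$ and $|z|^{\tau\gamma(p-2)\alpha/ps}$ can be compared with $d(\xi)^{\tau\gamma(p-2)\alpha/ps}$ up to the genuinely anisotropic factor $(|z|/d(\xi))^{\tau\gamma(p-2)\alpha/ps}$, which is bounded because $\tau\gamma<0$ and $(p-2)\alpha\ge0$, so $|z|\le d(\xi)$ gives a bound in the right direction only if the exponent is nonnegative — here I would instead peel off a power of $d(\xi)^{-2}$ using $\tau\gamma\le-2$, trading it against the homogeneous-dimension gap so that the effective exponent on each annulus matches the scaling of Theorem \ref{frac CKN in H^n}(i). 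Summing a geometric series in $k$ (convergent precisely by the strict inequality $1/\tau+\gamma(1+(p-2)\alpha/ps)/Q>0$) and applying Theorem \ref{frac CKN in H^n}(i) to each dyadic piece, followed by a discrete Hölder inequality in $k$ with the Gagliardo seminorm $[\,\cdot\,]_{s,p,\alpha}$ (which is superadditive over the annular decomposition up to the cross terms, estimated as in \cite{Adi-Mallick}) and the $L^q$ norm, yields \eqref{eq frac CKN in H^n iii}; the constraint $q>(1-a)\tau Q/(Q-2+\tau\gamma(p-2)\alpha/ps)$ is what makes the discrete Hölder step legitimate.

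The main obstacle I anticipate is handling the Gagliardo seminorm across the annular decomposition: unlike the weighted $L^\tau$ and $L^q$ norms, $[f]_{s,p,\alpha}^p$ does not split cleanly as a sum over $\bigcup_k A_{2^k,2^{k+1}}(0)$ because of the nonlocal kernel $d(\xi^{-1}\circ\xi')^{-(Q+ps)}|z-z'|^{-(p-2)\alpha}$ coupling distant annuli. I would control the long-range contribution by the standard trick of writing $|f(\xi)-f(\xi')|\le|f(\xi)|+|f(\xi')|$ when $\xi,\xi'$ lie in far-apart annuli and integrating the kernel in the far variable — this converges because $Q+ps>Q$ and $(p-2)\alpha<Q-2$ — thereby bounding the cross terms by a constant times a weighted $L^p$ norm of $f$, which in turn is absorbed into $[f]_{s,p,\alpha}$ via the fractional Hardy inequality \eqref{eq frac-har in H^n} or, when $ps\le2$, directly by the Sobolev embedding \eqref{eq frac-sob in H^n} combined with interpolation. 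The bookkeeping that ties the exponent $\tau\gamma\le-2$ to the convergence of all these sums simultaneously is the delicate point, but it is forced by \eqref{tau gamma rel} and should go through.
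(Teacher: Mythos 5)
There is a genuine gap. Your choice of the Koranyi annular decomposition \eqref{decom b} matches the paper's, but the central difficulty is not resolved by your sketch. On each annulus $A_{2^k,2^{k+1}}(0)$ one has $d(\xi)\sim 2^k$, yet $|z|$ ranges all the way down to $0$ (near the set $\{z=0\}$, $t\neq 0$), so the weight $|z|^{\tau\gamma(p-2)\alpha/(ps)}$, whose exponent is negative, is genuinely singular inside each annulus and is not comparable to any power of $d(\xi)$; ``peeling off a power of $d(\xi)^{-2}$'' does nothing to tame it. The paper's resolution is Lemma \ref{lemma 3.5}, the rearrangement-type bound $\int_D|z|^\beta\,dz\,dt\le C\lambda^2|D|^{(Q-2+\beta)/Q}$ for $D\subset B_\lambda(0)$, fed into a level-set decomposition of $f$ (the sets $E_i=\{f\ge 2^i\}$, $D_i=E_i\setminus E_{i+1}$ as in \cite{Adi-Mallick}) together with Lemmas \ref{lemma 3.3} and \ref{lemma 3.4}; this yields the local CKN inequality of Proposition \ref{prop 4.1} on bounded domains away from the origin. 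It is precisely here that $\tau\gamma\le-2$ enters: the factor $\lambda^2$ from Lemma \ref{lemma 3.5} is absorbed by replacing $s$ with $s'=s+\gamma/a+2/(a\tau)$, and $s'\le s$ exactly when $\tau\gamma\le-2$; moreover the hypothesis $q>(1-a)\tau Q/\bigl(Q-2+\tau\gamma(p-2)\alpha/(ps)\bigr)$ serves to guarantee $Q-ps'-(p-2)\alpha>0$, not to justify a discrete H\"older step as you suggest. None of these ingredients appear in your outline.

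Second, the proposed reduction to Theorem \ref{frac CKN in H^n}(i) annulus by annulus cannot work as stated: the balance relation \eqref{tau gamma rel} rigidly ties $\gamma$ to $\tau$, $a$ and $q$, so once you modify the weight to force the hypothesis $1/\tau+\gamma(1+(p-2)\alpha/(ps))/Q>2/(\tau Q)$ you destroy the scale invariance, and no geometric series materializes: the quantities $[f]_{s,p,\alpha,F_k}$ and $\lVert f\rVert_{L^q(F_k)}$ have no decay in $k$ in general. The paper instead proves a scale-invariant Poincar\'e--CKN inequality on annuli with the weight already inside the integral (Lemma \ref{lemma 3.7}, built on Corollary \ref{coro 2}) and then runs the same telescoping argument on the averages $(f)_{F_k}$ as in the proof of Theorem \ref{frac CKN in H^n}, using Lemma \ref{lemma 4.1} and the summation inequality \eqref{3.24}. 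Finally, absorbing the long-range cross terms of the seminorm via the fractional Hardy inequality \eqref{eq frac-har in H^n} would reintroduce the restriction $ps>2$, defeating the very improvement the theorem is meant to provide.
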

When $1/\tau+\gamma\left(1+(p-2)\frac{\alpha}{ps}\right)/{Q}=0$, L.H.S of \eqref{eq frac CKN in H^n iii} need not be finite in general. As in the Euclidean case, correction by  a weight function leads to the following analogous inequalities in this case.
\begin{theorem}\label{theorem limiting case}
Let $p$, $s$, $\alpha$, $\tau$, $a$, and $\gamma$ be as in Theorem \emph{\ref{frac CKN in H^n}} and $q>\frac{(1-a)\tau Q}{Q-2+\tau\gamma(p-2)\frac{\alpha}{ps}}$ satisfy \eqref{tau gamma rel} and \eqref{range of gamma}. If
\begin{equation}\label{eq 1.15}
\frac{1}{\tau}+\frac{\gamma\left(1+(p-2)\frac{\alpha}{ps}\right)}{Q}=0,
\end{equation}
then\\
\emph{(i)} For $f\in C_c^1(\HH^n)$ with $\mathrm{supp}\,f\subset B_R(0)$, we have
\begin{equation}\label{eq limiting case}
\left(\int_{\HH^n}\frac{d(\xi)^{\tau\gamma}|z|^{\tau\gamma(p-2)\frac{\alpha}{ps}}}{\ln^{\tau}(4R/d(\xi))}|f(\xi)|^{\tau}d\xi\right)^{\frac{1}{\tau}}\leq C[f]_{s,p,\alpha}^a\lVert f\rVert_{L^q(\HH^n)}^{1-a}.
\end{equation}
\emph{(ii)} For $f\in C_c^1(\HH^n)$ with $\mathrm{supp}\,f\cap B_r(0)=\phi$, we have
\begin{equation}\label{eq limiting case b}
\left(\int_{\HH^n}\frac{d(\xi)^{\tau\gamma}|z|^{\tau\gamma(p-2)\frac{\alpha}{ps}}}{\ln^{\tau} (4d(\xi)/r)}|f(\xi)|^{\tau}d\xi\right)^{\frac{1}{\tau}}\leq C[f]_{s,p,\alpha}^a\lVert f\rVert_{L^q(\HH^n)}^{1-a}.
\end{equation}
\end{theorem}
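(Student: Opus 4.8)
The plan is to prove part (i); part (ii) follows from the same argument applied to a decomposition of $\{d(\xi)\ge r\}$ rather than $B_R(0)$. Write $\delta_\lambda(z,t):=(\lambda z,\lambda^2 t)$ for the Heisenberg dilations, which are group automorphisms; under them $d$ and $|z|$ are positively homogeneous of degree $1$, the Haar measure has degree $Q$, $[f\circ\delta_\lambda]_{s,p,\alpha}=\lambda^{(ps+(p-2)\alpha-Q)/p}[f]_{s,p,\alpha}$ and $\|f\circ\delta_\lambda\|_{L^q(\HH^n)}=\lambda^{-Q/q}\|f\|_{L^q(\HH^n)}$. Replacing $f$ by $f\circ\delta_\lambda$ and $R$ by $R/\lambda$ leaves the argument of the logarithm in \eqref{eq limiting case} unchanged, and a direct check shows that \eqref{tau gamma rel} and \eqref{eq 1.15} are exactly the relations making every remaining power of $\lambda$ cancel; hence we may assume $R=1$. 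Then decompose $\{0<d(\xi)<1\}$ into the dyadic gauge annuli $A_k:=\{\xi:2^{-k-1}\le d(\xi)<2^{-k}\}$, $k\ge0$; since $\ln(4/d(\xi))\ge(k+2)\ln 2\ge(k+1)\ln2$ on $A_k$, it suffices to prove
\[
\sum_{k\ge 0}\frac{1}{(k+1)^{\tau}}\int_{A_k}d(\xi)^{\tau\gamma}|z|^{\tau\gamma(p-2)\alpha/ps}|f|^{\tau}\,d\xi\ \le\ C\,[f]_{s,p,\alpha}^{\,a\tau}\,\|f\|_{L^{q}(\HH^n)}^{\,(1-a)\tau}.
\]

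The key ingredient is a single scale-invariant estimate on the fixed model annulus $\mathcal A:=\{\xi:1/2\le d(\xi)\le 1\}$: for every $g\in C_c^1(\HH^n)$ with $\mathrm{supp}\,g\subset\mathcal A$,
\begin{equation*}
\Bigl(\int_{\mathcal A}|z|^{\tau\gamma(p-2)\alpha/ps}|g|^{\tau}\,d\xi\Bigr)^{1/\tau}\ \le\ C\,[g]_{s,p,\alpha}^{\,a}\,\|g\|_{L^{q}(\HH^n)}^{\,1-a}.\tag{$\star$}
\end{equation*}
Granting $(\star)$, apply it to $g:=f\circ\delta_{2^{-k}}$, which is supported in $\mathcal A$ whenever $\mathrm{supp}\,f\subset A_k$: under $\delta_{2^{-k}}$ the seminorm is multiplied by $2^{k(Q-ps-(p-2)\alpha)/p}$, the $L^q$-norm by $2^{kQ/q}$, and the left-hand integral in $(\star)$ equals $2^{k(Q+\tau\gamma(p-2)\alpha/ps)}$ times the corresponding integral over $A_k$; since moreover $d(\xi)^{\tau\gamma}\approx 2^{-k\tau\gamma}$ uniformly on $A_k$, a short computation with \eqref{tau gamma rel} shows that the total power of $2^{k}$ is $0$, giving
\[
\int_{A_k}d(\xi)^{\tau\gamma}|z|^{\tau\gamma(p-2)\alpha/ps}|f|^{\tau}\,d\xi\ \le\ C\,[f]_{s,p,\alpha}^{\,a\tau}\,\|f\|_{L^{q}(\HH^n)}^{\,(1-a)\tau}
\]
with $C$ independent of $k$ — the \emph{global} norms appear on the right because $(\star)$ is stated with the full $[\,\cdot\,]_{s,p,\alpha}$ and $\|\cdot\|_{L^q(\HH^n)}$, so no bounded‑overlap argument is needed. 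Summing over $k$ and using that $\sum_{k\ge0}(k+1)^{-\tau}<\infty$ — which is precisely where $\tau>1$ enters — finishes part (i); part (ii) is identical once one rescales the annuli $\{r2^{k}\le d(\xi)<r2^{k+1}\}$, on which $\ln(4d(\xi)/r)\ge(k+1)\ln2$, to $\mathcal A$.

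The substantive step is $(\star)$. On $\mathcal A$ the gauge is comparable to $1$, so $(\star)$ is, up to constants, the restriction to $\mathcal A$ of a weighted inequality of the same type as in Theorems \ref{frac CKN in H^n}--\ref{frac CKN in H^n b}; the difficulty is that \eqref{eq 1.15} places $(\tau,\gamma)$ exactly on the excluded boundary $\tfrac{1}{\tau}+\gamma(1+(p-2)\alpha/ps)/Q=0$, and that the weight $|z|^{\tau\gamma(p-2)\alpha/ps}$ is singular along $\{z=0\}\cap\mathcal A$. The plan to establish $(\star)$ is: (a) because $\mathcal A$ is bounded, the contribution of far‑away $\xi'$ already forces $[g]_{s,p,\alpha}\gtrsim\|g\|_{L^p(\mathcal A)}$, so $[g]_{s,p,\alpha}$ controls a genuine fractional $W^{s,p}$-norm of $g$ on $\mathcal A$; (b) the singularity of the weight on $\{z=0\}$ is absorbed by the fractional Hardy inequality of Theorem \ref{frac-har in H^n} restricted to $\mathcal A$ (where $d^{ps}\approx1$, and whose hypothesis $ps>2$ holds automatically in the borderline case), which exploits the factor $|z-z'|^{-(p-2)\alpha}$ in the seminorm, together with a local Sobolev (or Morrey, if $ps>Q$) embedding to reach the exponent $\tau$; (c) for $a<1$, the interpolation is obtained by applying the subcritical inequalities of Theorems \ref{frac CKN in H^n}(i) and \ref{frac CKN in H^n b} to $g$ with the indices re‑balanced via \eqref{tau gamma rel} around an auxiliary value $\tilde a<a$ (for which one is strictly inside their hypotheses, so they apply on $\mathcal A$), complemented by an elementary $\|g\|_{L^q}$-estimate valid for $\tilde a$ just above $a$ — here Hölder against $|z|^{\tau\gamma(p-2)\alpha/ps}$ over $\mathcal A$ requires exactly $q>(1-a)\tau Q/(Q-2+\tau\gamma(p-2)\alpha/ps)$, which is why that hypothesis is imposed — and then recovering the exponent $a$ by convexity. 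I expect step (c), i.e. reconstructing the borderline estimate $(\star)$ from the strictly subcritical inequalities despite the vanishing margin, to be the main obstacle; the rest is the scaling bookkeeping above together with the convergence of $\sum_k(k+1)^{-\tau}$.
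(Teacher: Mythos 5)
There is a genuine gap, and it sits at the heart of your reduction. Your whole argument rests on the per-annulus estimate
\begin{equation*}
\int_{A_k}d(\xi)^{\tau\gamma}|z|^{\tau\gamma(p-2)\frac{\alpha}{ps}}|f|^{\tau}\,d\xi\ \le\ C\,[f]_{s,p,\alpha}^{\,a\tau}\,\lVert f\rVert_{L^{q}(\HH^n)}^{\,(1-a)\tau},\qquad C\ \text{independent of}\ k,
\end{equation*}
which you obtain by applying $(\star)$ to $f\circ\delta_{2^{-k}}$ ``whenever $\mathrm{supp}\,f\subset A_k$.'' That hypothesis is never satisfied: $f$ is supported in $B_R(0)$ and lives on \emph{all} the annuli simultaneously, so $(\star)$ (which requires $g$ compactly supported in $\mathcal A$) simply does not apply. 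Worse, the per-annulus estimate is false for general $f\in C_c^1(B_R(0))$. Since $\int_{A_k}d^{\tau\gamma}|z|^{\tau\gamma(p-2)\alpha/ps}\,d\xi\sim 2^{-k(Q+\tau\gamma(1+(p-2)\alpha/ps))}=O(1)$ in the limiting case \eqref{eq 1.15}, the estimate with $C$ uniform in $k$ would force $|f(0)|^{\tau}\le C[f]_{s,p,\alpha}^{a\tau}\lVert f\rVert_{L^q}^{(1-a)\tau}$ by letting $k\to\infty$. For $a=1$, \eqref{tau gamma rel} together with \eqref{eq 1.15} gives exactly $ps+(p-2)\alpha=Q$, the critical case $p^*=\infty$, where such an $L^\infty$ bound fails (truncated iterated-logarithm functions have bounded seminorm and arbitrarily large value at the origin). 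So the log weight cannot be exploited merely as a convergent factor $\sum_k(k+1)^{-\tau}$ multiplying a uniform bound; it has to interact with the non-decaying part of $f$ on each annulus.

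That non-decaying part is precisely what the paper's proof is about. One splits $|f|^\tau\lesssim |f-(f)_{F_k}|^\tau+|(f)_{F_k}|^\tau$ on each annulus: the oscillation term is controlled by the \emph{localized} seminorm via Lemma \ref{lemma 3.7} (giving \eqref{4.2}, summable by bounded overlap of the $F_k$), while the averages $(f)_{F_k}$ do not decay (they tend to $f(0)$) and their weighted sum $\sum_k (n_0-k+1)^{-\tau}|(f)_{F_k}|^{\tau}$ is controlled only through the consecutive-difference estimate \eqref{4.4} combined with Lemma \ref{lemma 4.1} applied with the $k$-dependent constant $c=\bigl(\tfrac{n_0-k+1}{n_0-k+1/2}\bigr)^{\tau-1}$ and an Abel-summation/telescoping argument. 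Your proposal contains no substitute for this step. (Your $(\star)$ itself is fine --- it is essentially the paper's Proposition \ref{prop 4.1}, whose proof handles the borderline indices directly by level-set decomposition, so your worry in step (c) about reconstructing it from the subcritical theorems is not where the difficulty lies.) To repair the argument you would have to either prove the per-annulus bound with the average subtracted and then control the averages as above, or localize $f$ with cutoffs and then confront exactly the same non-summable commutator/average contributions.
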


For $a=1$ and $\gamma=-s,$ \eqref{eq frac CKN in H^n i} and \eqref{eq frac CKN in H^n iii} are the fractional Hardy inequality \eqref{eq frac-har in H^n} for the corresponding indices, and they hold  for $ps=2$ when $ps+(p-2)\alpha<Q$, and for $ps\leq2$ as well when $ps+(p-2)\alpha<Q-2$. This an improvement on the earlier results proved in \cite{Adi-Mallick,Suragan-Hardy}.\smallskip

The proof of our first result  follows the technique used in \cite{CKN} for the Euclidean spaces. We decompose $\HH^n$ into the sets
\begin{equation}\label{decom a1}
\mathscr{A}_k:=\{\xi=(z,t)\in \HH^n:2^k<|z|<2^{k+1}\}
\end{equation}
to deal with the term $|z|^{\tau\gamma(p-2)\frac{\alpha}{ps}}$ and then we decompose each $\mathscr{A}_k$ into
\begin{equation}\label{decom a2}
\mathscr{A}_{k,j}:=\{\xi=(z,t)\in\mathscr{A}_k:j2^{2k}<t<(j+1)2^{2k}\}
\end{equation}
so that $|\mathscr{A}_{k,j}|=C2^{kQ}$ (see Figure 1.1).
\begin{center}
\begin{tikzpicture}
\draw[thick,->] (-1,0) -- (6,0) node[anchor=north west] {t};
\draw[thick,->] (0,0) -- (0,4.5) node[anchor=south east] {z};
\draw[dashed] (-1,4) -- (0,4);
\draw (0,4) -- (4,4);
\draw[dashed] (4,4) -- (6,4);
\draw[dashed] (-1,2) -- (0,2);
\draw (0,2) -- (4,2);
\draw[dashed] (4,2) -- (6,2);
\draw[dashed] (-1,1) -- (0,1);
\draw (0,1) -- (4,1);
\draw[dashed] (4,1) -- (6,1);
\draw[dashed] (-1,0.5) -- (6,0.5);
\draw (4,1) -- (4,4);
\draw (3,1) -- (3,2);
\draw (2,1) -- (2,2);
\draw (1,1) -- (1,2);
\draw (-1,3) node{$\mathscr{A}_{k+1}$};
\draw (-1,1.5) node{$\mathscr{A}_{k}$};
\draw (-1,0.75) node{$\mathscr{A}_{k-1}$};
\draw (2,3) node{$\mathscr{A}_{k+1,0}$};
\draw (0.5,1.5) node{$\mathscr{A}_{k,0}$};
\draw (1.5,1.5) node{$\mathscr{A}_{k,1}$};
\draw (2.5,1.5) node{$\mathscr{A}_{k,2}$};
\draw (3.5,1.5) node{$\mathscr{A}_{k,3}$};
\end{tikzpicture}
\begin{tikzpicture}
\draw[thick,->] (-1,0) -- (6,0) node[anchor=north west] {t};
\draw[thick,->] (0,0) -- (0,4.5) node[anchor=south east] {z};
\draw[dashed] (-1,4) -- (6,4);
\draw[dashed] (-1,2) -- (6,2);
\draw[dashed] (-1,1) -- (6,1);
\draw[dashed] (-1,0.5) -- (6,0.5);
\draw(2,1) rectangle (3,2);
\draw[thick] (1,0.5) rectangle (4,4);
\draw (2.5,1.5) node{$\mathscr{A}_{k,j}$};
\draw (2.5,2.5) node{$\mathscr{B}_{k,j}$};
\end{tikzpicture}
\end{center}
$$\mathrm{Figure}\ 1.1\quad\qquad\qquad\qquad\qquad\qquad\qquad\qquad\mathrm{Figure}\ 1.2$$ \smallskip

For any measurable set $\Omega\subset\HH^n$ we denote by
$$(f)_\Omega:= \fint_\Omega f(\xi)d\xi=\frac{1}{|\Omega|}\int_\Omega f(\xi)d\xi ,$$
the average of the function $f$ over $\Omega$. If $\mathscr{A}$ is a bounded extension domain for $W^{s,p,\alpha}$ (discussed in next section), then the following inequality holds:
\begin{equation}\label{1.22}
\fint_{\mathscr{A}}|f-(f)_{\mathscr{A}}|^{\tau} d\xi\leq C[f]_{s,p,\alpha,\mathscr{A}}^{a\tau}\lVert f\rVert_{L^q(\mathscr{A})}^{(1-a)\tau},\ \ \forall f\in C(\Bar{\mathscr{A}}).
\end{equation}
Such an inequality plays a crucial role in the proof of the Euclidean results. It is not known to us whether the sets $\{\mathscr{A}_{k,j}\}$ are extension domains for $W^{s,p,\alpha}$ or not, but if $K$ is a compact subset of a bounded open set $\Omega$, the following inequality is always true:
\begin{equation}\label{1.23}
\fint_{K}|f-(f)_{\Omega}|^{\tau} d\xi\leq C(\Omega,K)[f]_{s,p,\alpha,\Omega}^{a\tau}\lVert f\rVert_{L^q(\Omega)}^{(1-a)\tau},\ \ \forall f\in C(\Bar{\Omega}).
\end{equation}
Here the constant $C(\Omega,K)$ depends on the sets $K$ and $\Omega$. In particular, we establish the following inequality (in Lemma \ref{lemma 3.2}):
\begin{equation}\label{1.25}
\fint_{\mathscr{A}_{k,j}}|f-(f)_{\mathscr{B}_{k,j}}|^{\tau} d\xi\leq C\,2^{-\frac{k}{\tau}\left(Q+\tau\gamma\left(1+(p-2)\frac{\alpha}{ps}\right)\right)}[f]_{s,p,\alpha,\mathscr{B}_{k,j}}^{a\tau}\lVert f\rVert_{L^q(\mathscr{B}_{k,j})}^{(1-a)\tau},
\end{equation}
where 
\begin{equation}\label{set 2A}
\mathscr{B}_{k,j}:=\{\xi=(z,t)\in\HH^n:2^{k-1}<|z|\leq 2^{k+2}, \ (j-1)2^{2k}<t<(j+2)2^{2k}\},
\end{equation}
(see Figure 1.2). Note that $\mathscr{A}_{k,j}\Subset\mathscr{B}_{k,j}$. By the notation $A\Subset B$ we mean that the closure of the set $A$ is a compact subset of $B$. The constant $C$ in \eqref{1.25} is independent of $k$ and $j$. The dependency of the constant $C(\Omega,K)$ in \eqref{1.23} on the sets is being taken care of by the factor $2^{-\frac{k}{\tau}\left(Q+\tau\gamma\left(1+(p-2)\frac{\alpha}{ps}\right)\right)}$.
\begin{remark}\label{remark 1}
\emph{Note that the sets $\mathscr{A}_{k,j}$'s are mutually disjoint and
$$\sum_k\sum_j\int_{\mathscr{A}_{k,j}}f(\xi)d\xi=\int_{\HH^n}f(\xi)d\xi$$
holds for any measurable function $f.$ This is not the case for $\mathscr{B}_{k,j}$ since these sets  are not mutually disjoint. However there is a positive constant $C$, independent of $k$ and $j$ such that
\begin{equation}
\sum_k\sum_j\int_{\mathscr{B}_{k,j}}f(\xi)d\xi\leq C\int_{\HH^n}f(\xi)d\xi
\end{equation}
holds for any non-negative function, which will take care the overlapping issue.}
\end{remark}
For the case $(p-2)\alpha=0,$ like the proof in the Euclidean case, it is sufficient to work with the annular  decomposition
\begin{equation}\label{decom b}
A_k:=A_{2^k,2^{k+1}}(0)=\{\xi\in\HH^n:2^k<d(\xi)\leq 2^{k+1}\},\ \ k \in \ZZ,
\end{equation}
of $\HH^n$ for $Q+\tau\gamma\left(1+(p-2)\frac{\alpha}{ps}\right)>0$. Whereas, for the case $(p-2)\alpha\neq0,$ we work with the decomposition \eqref{decom a2}  under the additional condition that $Q+\tau\gamma(1+(p-2)\frac{\alpha}{p s})>2$. In the proof of Theorem \ref{frac CKN in H^n b} and Theorem \ref{theorem limiting case} we use the decomposition \eqref{decom b} and we deal with the term $|z|^{\tau\gamma(p-2)\frac{\alpha}{ps}}$ by including this in the integral $\int_{\mathscr{B}_k}|z|^{\tau\gamma(p-2)\frac{\alpha}{ps}}|f(\xi)|^{\tau}d\xi$ (see Lemma \ref{lemma 3.7}), where the condition $\tau\gamma\leq-2$ is crucial.\smallskip

For related work on domains, Orlicz spaces and other generalisations we refer to \cite{  subha3,subha2,roy2022,lu2004, lu2013,lu2009}. Recently several geometric Hardy and Caffarelli-Kohn-Nirenberg identities on general domains, Carnot groups and on Cartan-Hadamard manifolds has been established in \cite{luckn,GuozhenLu2,lucartan, GuozhenLu1}. Hardy identities are important in establishing more precise and stronger inequalities than known Hardy’s inequalities in the literature.\smallskip

The paper is organized as follows: In Section 2 we discuss some basic properties and embedding theorems of the space $W^{s,p,\alpha}(\Omega)$. Section 3 contains the proof of Theorem \ref{frac CKN in H^n}. In Section 4 we first establish some relevant results (Proposition \ref{prop 4.1} and Lemma \ref{lemma 3.7}), and then with the help of those results we prove Theorem \ref{frac CKN in H^n b}. We conclude the paper with the proof of Theorem \ref{theorem limiting case} in Section 5.

\section{Some basic properties of fractional Sobolev spaces}
The following result gives the relation between the spaces $W^{s,p,\alpha}(\Omega)$ and $ W^{s',p,\alpha}(\Omega)$ for $s'<s$. This result for the case $(p-2)\alpha=0$ is already established in \cite{Maria}.
\begin{proposition}\label{prop 2.1}
Let $1\leq p<\infty$ and $\alpha\in\RR$, $0<s'\leq s<1$ and $\Omega$ be an open set in $\HH^n$. Then for any measurable function $f:\Omega\rightarrow\RR$,
\begin{equation}\label{2.1}
\lVert f\rVert_{s',p,\alpha,\Omega}\leq C \lVert f\rVert_{s,p,\alpha,\Omega}
\end{equation}
for some positive constant $C=C(n,p,s,\alpha)$. In other word, $W^{s,p,\alpha}(\Omega)\subseteq W^{s',p,\alpha}(\Omega)$.
\end{proposition}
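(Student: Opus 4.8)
The plan is to reduce everything to a comparison of the two Gagliardo-type seminorms. Since $\lVert f\rVert_{L^p(\Omega)}$ occurs identically in both $\lVert f\rVert_{s',p,\alpha,\Omega}$ and $\lVert f\rVert_{s,p,\alpha,\Omega}$, it suffices to show $[f]_{s',p,\alpha,\Omega}^p\le C\,\lVert f\rVert_{s,p,\alpha,\Omega}^p$; moreover we may assume the right-hand side is finite, so that in particular $f\in L^p(\Omega)$. The core idea, exactly as in the Euclidean case treated in \cite{Hitchhiker,Maria}, is to split the double integral defining $[f]_{s',p,\alpha,\Omega}^p$ over the region $\{d(\xi^{-1}\circ\xi')<1\}$ near the diagonal and the region $\{d(\xi^{-1}\circ\xi')\ge 1\}$ away from it, and to estimate each contribution separately.

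On the near-diagonal region the argument is immediate: since $0<s'\le s$ we have $Q+ps'\le Q+ps$, so $d(\xi^{-1}\circ\xi')^{-(Q+ps')}\le d(\xi^{-1}\circ\xi')^{-(Q+ps)}$ whenever $d(\xi^{-1}\circ\xi')<1$; as the weight $|z-z'|^{-(p-2)\alpha}$ is common to both seminorms, the contribution of $\{d(\xi^{-1}\circ\xi')<1\}$ to $[f]_{s',p,\alpha,\Omega}^p$ is bounded above by $[f]_{s,p,\alpha,\Omega}^p$.

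For the far region I would use the elementary bound $|f(\xi)-f(\xi')|^p\le 2^{p-1}\bigl(|f(\xi)|^p+|f(\xi')|^p\bigr)$ together with the symmetry of $d(\xi^{-1}\circ\xi')$ and $|z-z'|$ under $\xi\leftrightarrow\xi'$ to estimate this contribution by
\[
2^p\int_\Omega |f(\xi)|^p\,I(\xi)\,d\xi,\qquad I(\xi):=\int_{\{\xi'\,:\,d(\xi^{-1}\circ\xi')\ge 1\}}\frac{d\xi'}{d(\xi^{-1}\circ\xi')^{Q+ps'}\,|z-z'|^{(p-2)\alpha}},
\]
where the $\xi'$-integration has been enlarged to all of $\HH^n$. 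The substitution $\zeta=\xi^{-1}\circ\xi'$ preserves the Haar measure by left invariance, and the horizontal component of $\zeta$ equals $z'-z$; hence $I(\xi)=I:=\int_{\{d(\zeta)\ge 1\}} d(\zeta)^{-(Q+ps')}|w|^{-(p-2)\alpha}\,d\zeta$ is independent of $\xi$, with $w$ the horizontal part of $\zeta$. To see $I<\infty$, pass to polar coordinates on the homogeneous group: writing $\zeta=\delta_\lambda\omega$ with $\lambda=d(\zeta)$ and $\omega$ on the unit gauge sphere $\Sigma=\{d=1\}$, so that $d\zeta=\lambda^{Q-1}\,d\lambda\,d\sigma(\omega)$, one obtains
\[
I=\Bigl(\int_\Sigma \frac{d\sigma(\omega)}{|w(\omega)|^{(p-2)\alpha}}\Bigr)\int_1^\infty \lambda^{-1-ps'-(p-2)\alpha}\,d\lambda .
\]
The radial factor is finite because $ps'+(p-2)\alpha\ge ps'>0$ (here the standing assumption $(p-2)\alpha\ge 0$ is used), and the spherical factor is finite because $|w(\omega)|^{-(p-2)\alpha}$ is continuous on $\Sigma$ away from the two poles $(0,\pm 1)$, near which $|w(\omega)|$ is comparable to the distance to the pole along the $2n$-dimensional manifold $\Sigma$, so that local integrability holds exactly when $(p-2)\alpha<2n=Q-2$ — again a standing assumption. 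Combining the two regions gives $[f]_{s',p,\alpha,\Omega}^p\le [f]_{s,p,\alpha,\Omega}^p+2^pI\,\lVert f\rVert_{L^p(\Omega)}^p$, hence $\lVert f\rVert_{s',p,\alpha,\Omega}\le C\,\lVert f\rVert_{s,p,\alpha,\Omega}$ with $C=(1+2^pI)^{1/p}$, which also yields the inclusion $W^{s,p,\alpha}(\Omega)\subseteq W^{s',p,\alpha}(\Omega)$.

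The step I expect to be the main obstacle is the finiteness of the far-field constant $I$, and within it the integrability of the Heisenberg weight $|z-z'|^{-(p-2)\alpha}$ over the gauge sphere; this is precisely where the standing hypotheses $(p-2)\alpha\ge 0$ and $(p-2)\alpha<Q-2$ enter. Everything else is a routine splitting-and-change-of-variables argument, entirely parallel to the Euclidean proof, with the Euclidean sphere replaced by the Koranyi gauge sphere and Lebesgue polar coordinates replaced by homogeneous-group polar coordinates.
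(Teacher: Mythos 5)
Your proposal is correct and follows the same skeleton as the paper's proof: split the Gagliardo seminorm into a near-diagonal and a far region, use monotonicity of $d^{-(Q+ps')}$ on the near region, and on the far region use $|f(\xi)-f(\xi')|^p\le 2^{p-1}(|f(\xi)|^p+|f(\xi')|^p)$, symmetry, and a change of variables to reduce everything to the finiteness of a single tail integral independent of $\xi$. The only genuine divergence is how that tail integral is shown to be finite. The paper splits (the complement of the box set $D_1(0)$ from \eqref{set D_r}, rather than the gauge ball) into three explicit regions $B_1,B_2,B_3$ and computes iterated integrals using Euclidean polar coordinates in the $z$-variable together with the substitution $\mu=r^2t$; you instead invoke Folland--Stein polar coordinates for the homogeneous norm, factoring the integral into a radial part (finite since $ps'+(p-2)\alpha>0$) and a gauge-sphere part (finite since $(p-2)\alpha<Q-2$). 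Both routes use exactly the same two standing hypotheses; yours is more compact but leans on the polar-coordinate formula for homogeneous groups and on the local comparability of the sphere measure $d\sigma$ with $dz$ near the poles, which you assert rather than verify --- the paper's elementary three-region computation avoids needing that machinery. Either way the conclusion and the constant's dependence on $(n,p,s',\alpha)$ come out the same.
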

\begin{proof}
We have
\begin{multline}\label{2.2}
[f]_{s',p,\alpha,\Omega}^p=\int_{\Omega} \int_{\Omega\cap D_1(\xi)}\frac{|f(\xi)-f(\xi')|^pd\xi' d\xi}{d({\xi}^{-1}\circ \xi')^{Q+ps'} |z-z'|^{(p-2)\alpha}}\\
+\int_{\Omega} \int_{\Omega\cap D_1(\xi)^C}\frac{|f(\xi)-f(\xi')|^pd\xi' d\xi}{d({\xi}^{-1}\circ \xi')^{Q+ps'} |z-z'|^{(p-2)\alpha}}=I_1+I_2,
\end{multline}
where $D_1(\xi)$ is the set defined in \eqref{set D_r}. Since $d({\xi}^{-1}\circ \xi')<1$ in $D_1(\xi)$,
\begin{equation}\label{2.3}
I_1\leq \int_{\Omega} \int_{\Omega\cap D_1(\xi)}\frac{|f(\xi)-f(\xi')|^pd\xi' d\xi}{d({\xi}^{-1}\circ \xi')^{Q+ps} |z-z'|^{(p-2)\alpha}}\leq [f]_{s,p,\alpha,\Omega}^p.
\end{equation}
We now estimate the integral $I_2$.
$$I_2\leq 2^{p-1}\int_{\Omega} \int_{\Omega\cap D_1(\xi)^C}\frac{(|f(\xi)|^p+|f(\xi')|^p)d\xi' d\xi}{d({\xi}^{-1}\circ \xi')^{Q+ps'} |z-z'|^{(p-2)\alpha}}.$$
Using symmetry, we obtain
\begin{equation}\label{2.4}
I_2\leq 2^p\int_{\Omega}\left(\int_{\Omega\cap D_1(\xi)^C}\frac{d\xi'}{d({\xi}^{-1}\circ \xi')^{Q+ps'} |z-z'|^{(p-2)\alpha}}\right)|f(\xi)|^pd\xi.
\end{equation}
Let
\begin{multline*}
J=\int_{\Omega\cap D_1(\xi)^C}\frac{d\xi'}{d({\xi}^{-1}\circ \xi')^{Q+ps'} |z-z'|^{(p-2)\alpha}}\\
\leq \int_{D_1(\xi)^C}\frac{dz'dt'}{(|z-z'|^4+(t'-t-2\langle y,x'\rangle+2\langle x,y'\rangle)^2)^{\frac{Q+ps'}{4}}|z-z'|^{(p-2)\alpha}}.
\end{multline*}
We use the change of variable $\zeta=z'-z$ and $\mu=t'-t-2\langle y,x'\rangle+2\langle x,y'\rangle$, and obtain
$$J\leq \int_{D_1(0)^C}\frac{d\zeta d\mu}{(|\zeta|^4+\mu^2)^{\frac{Q+ps'}{4}}|\zeta|^{(p-2)\alpha}}.$$
Let us define the following sets:
$$B_1=\{(z,t)\in \RR^{2n}\times\RR:|z|>1,\, |t|>1\},$$
$$B_2=\{(z,t)\in \RR^{2n}\times\RR:|z|>1,\, |t|<1\},$$
$$B_3=\{(z,t)\in \RR^{2n}\times\RR:|z|<1,\, |t|>1\},$$
and the integrals
$$J_1= \int_{B_1}\frac{d\zeta d\mu}{(|\zeta|^4+\mu^2)^{\frac{Q+ps'}{4}}|\zeta|^{(p-2)\alpha}},$$
$$J_2= \int_{B_2}\frac{d\zeta d\mu}{(|\zeta|^4+\mu^2)^{\frac{Q+ps'}{4}}|\zeta|^{(p-2)\alpha}},$$
$$J_3= \int_{B_3}\frac{d\zeta d\mu}{(|\zeta|^4+\mu^2)^{\frac{Q+ps'}{4}}|\zeta|^{(p-2)\alpha}}.$$
Clearly $D_1(0)^C=B_1\cup B_2\cup B_3$ up to a measure zero set, and $J\leq J_1+J_2+J_3$.\\
\textbf{Estimate for $J_1$:} Let $\omega_{2n}$ be the measure of the unit sphere in $\RR^{2n}$.
\begin{multline*}
J_1=2\omega_{2n}\int_1^{\infty}\int_1^{\infty}\frac{r^{Q-3}dr d\mu}{(r^4+\mu^2)^{\frac{Q+ps'}{4}}r^{(p-2)\alpha}}\\
(\mathrm{using}\ \mu=r^2t)\ =2\omega_{2n}\int_1^{\infty}\left(\int_{\frac{1}{r^2}}^\infty\frac{dt}{(1+t^2)^{\frac{Q+ps'}{4}}}\right)\frac{dr}{r^{1+ps'+(p-2)\alpha}}\\
\leq 2\omega_{2n}\int_1^{\infty}\left(\int_0^\infty\frac{dt}{(1+t^2)^{\frac{Q+ps'}{4}}}\right)\frac{dr}{r^{1+ps'+(p-2)\alpha}}.
\end{multline*}
Since $Q+ps'>2$ and $ps'+(p-2)\alpha>0$, both the integrals of R.H.S is finite. Hence $J_1\leq C_1$.\\
\textbf{Estimate for $J_2$:}
\begin{multline*}
J_2=2\omega_{2n}\int_1^{\infty}\int_0^1\frac{r^{Q-3}dr d\mu}{(r^4+\mu^2)^{\frac{Q+ps'}{4}}r^{(p-2)\alpha}}\\
(\mathrm{using}\ \mu=r^2t)\ =2\omega_{2n}\int_1^{\infty}\left(\int_0^{\frac{1}{r^2}}\frac{dt}{(1+t^2)^{\frac{Q+ps'}{4}}}\right)\frac{dr}{r^{1+ps'+(p-2)\alpha}}\\
\leq 2\omega_{2n}\int_1^{\infty}\left(\int_0^1\frac{dt}{(1+t^2)^{\frac{Q+ps'}{4}}}\right)\frac{dr}{r^{1+ps'+(p-2)\alpha}}\leq C_2.
\end{multline*}
\textbf{Estimate for $J_3$:}
$$J_3\leq 2\omega_{2n}\int_0^1\left(\int_1^\infty\frac{d\mu}{\mu^{\frac{Q+ps'}{2}}}\right)r^{Q-3-(p-2)\alpha}dr.$$
Since $\frac{Q+ps'}{2}>1$ and $(p-2)\alpha<Q-2$, we have $J_3\leq C_3$.
Hence $J$ is integrable.\\
Hence from \eqref{2.4}, we get
\begin{equation}\label{2.5}
I_2\leq 2^pC\lVert f\rVert_{L^p(\Omega)}^p.
\end{equation}
Combining \eqref{2.2}, \eqref{2.3} and \eqref{2.5}, we obtain
\begin{equation}
\lVert f\rVert_{s',p,\alpha,\Omega}^p\leq (2^pC+1)\lVert f\rVert_{L^p(\Omega)}^p+[f]_{s,p,\alpha,\Omega}^p\leq C(n,p,s,\alpha) \lVert f\rVert_{s,p,\alpha,\Omega}^p.
\end{equation}
\end{proof}
\begin{definition}
Let $1\leq p<\infty$, $s\in(0,1)$ and $\alpha\in\RR$, and let $\Omega\subset\HH^n$ be an open set. We say that $\Omega$ is an extension domain, if there exists a positive constant $C=C(n,p,s,\alpha,\Omega)$ such that, for any function $f\in W^{s,p,\alpha}(\Omega)$, there exists $\Bar{f}\in W_0^{s,p,\alpha}(\HH^n)$ with $\Bar{f}|_\Omega=f$, and
\begin{equation}\label{extension}
\lVert \Bar{f}\rVert_{s,p,\alpha}\leq C\lVert f\rVert_{s,p,\alpha,\Omega}.
\end{equation}
\end{definition}
In the following result we extend any function of $W^{s,p,\alpha}(\Omega)$, supported in $K\Subset\Omega$ to a function in $W^{s,p,\alpha}(\HH^n)$.
\begin{proposition}\label{prop 2.2}
Let $1\leq p<\infty$, $s\in(0,1)$, $\alpha\in\RR$ and $\Omega$ be any open set in $\HH^n$. If $f\in W^{s,p,\alpha}(\Omega)$ and $K$ is any compact subset of $\Omega$ such that $f\equiv0$ in $\Omega\setminus K$, then the extension function
$$\Bar{f}(\xi)=\begin{cases}
f(\xi) & \text{if $\xi\in\Omega$}\\
0 & \text{if $\xi\in\HH^n\setminus\Omega$}
\end{cases}$$
belongs to $W^{s,p,\alpha}(\HH^n)$, and there exists a positive constant $C$ depending on $n$, $p$, $s$, $\alpha$, $K$, $\Omega$ and is such that
\begin{equation}\label{eq 2.8}
\lVert \Bar{f}\rVert_{s,p,\alpha}\leq C\lVert f\rVert_{s,p,\alpha,\Omega}.
\end{equation}
\end{proposition}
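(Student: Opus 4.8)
The plan is to split the Gagliardo-type seminorm of $\Bar f$ over $\HH^n\times\HH^n$ into the piece coming from $\Omega\times\Omega$ and the pieces where one or both variables leave $\Omega$; the first piece is exactly $[f]_{s,p,\alpha,\Omega}^p$, and the remaining pieces must be controlled by $\lVert f\rVert_{L^p(\Omega)}^p$ using the fact that $f$ vanishes off the compact set $K$. Write
\begin{multline*}
[\Bar f]_{s,p,\alpha}^p
=\int_\Omega\int_\Omega \frac{|f(\xi)-f(\xi')|^p}{d(\xi^{-1}\circ\xi')^{Q+ps}|z-z'|^{(p-2)\alpha}}\,d\xi'\,d\xi
+2\int_{\HH^n\setminus\Omega}\int_K \frac{|f(\xi')|^p}{d(\xi^{-1}\circ\xi')^{Q+ps}|z-z'|^{(p-2)\alpha}}\,d\xi'\,d\xi,
\end{multline*}
where the factor $2$ comes from symmetry and we used $\Bar f(\xi)=0$ for $\xi\notin\Omega$ together with $f\equiv 0$ on $\Omega\setminus K$ (so on the region $\xi,\xi'\in\HH^n\setminus K$ the integrand vanishes). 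The first term is $[f]_{s,p,\alpha,\Omega}^p$, so it suffices to bound the second term, call it $\mathcal I$, by $C\lVert f\rVert_{L^p(\Omega)}^p$ with $C=C(n,p,s,\alpha,K,\Omega)$.

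For $\mathcal I$, set $\delta:=\operatorname{dist}(K,\HH^n\setminus\Omega)$ in the Koranyi gauge metric; since $K$ is compact and contained in the open set $\Omega$ we have $\delta>0$. Then for $\xi'\in K$ and $\xi\in\HH^n\setminus\Omega$ we have $d(\xi^{-1}\circ\xi')\ge\delta$, and I would write
$$
\mathcal I\le 2\int_K |f(\xi')|^p\left(\int_{\{d(\xi^{-1}\circ\xi')\ge\delta\}} \frac{d\xi}{d(\xi^{-1}\circ\xi')^{Q+ps}|z-z'|^{(p-2)\alpha}}\right)d\xi'.
$$
Fix $\xi'=(z',t')\in K$ and perform the change of variables $\zeta=z-z'$, $\mu=t-t'-2\langle y',x\rangle+2\langle x',y\rangle$ (Jacobian $1$, as in the proof of Proposition 2.1), so the inner integral becomes $\int_{\{(|\zeta|^4+\mu^2)^{1/4}\ge\delta\}}(|\zeta|^4+\mu^2)^{-(Q+ps)/4}|\zeta|^{-(p-2)\alpha}\,d\zeta\,d\mu$. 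This is exactly the type of tail integral estimated in the proof of Proposition 2.1 (the analogue of $J$ with the cutoff at scale $\delta$ instead of $1$), and the same polar-coordinate computation — split according to $|\zeta|\gtrless\delta$, use $\mu=|\zeta|^2 t$, and invoke $Q+ps>2$, $ps+(p-2)\alpha>0$ (equivalently our standing hypothesis $(p-2)\alpha\ge 0$ together with $ps>0$), and $(p-2)\alpha<Q-2$ — shows it is finite and bounded by a constant $C(n,p,s,\alpha,\delta)$ independent of $\xi'$. Hence $\mathcal I\le 2C\,\lVert f\rVert_{L^p(K)}^p\le 2C\,\lVert f\rVert_{L^p(\Omega)}^p$.

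Putting the two pieces together gives $[\Bar f]_{s,p,\alpha}^p\le [f]_{s,p,\alpha,\Omega}^p + C\lVert f\rVert_{L^p(\Omega)}^p$, and since trivially $\lVert\Bar f\rVert_{L^p(\HH^n)}=\lVert f\rVert_{L^p(\Omega)}$, we conclude $\lVert\Bar f\rVert_{s,p,\alpha}^p\le (C+1)\lVert f\rVert_{s,p,\alpha,\Omega}^p$, which is \eqref{eq 2.8}. The only real point requiring care is the uniformity of the tail-integral bound in $\xi'\in K$: this is where compactness of $K$ enters, guaranteeing a single positive lower bound $\delta$ for $d(\xi^{-1}\circ\xi')$ over all $\xi'\in K$, $\xi\notin\Omega$; everything else is a repetition of the convergence analysis already carried out for $J_1,J_2,J_3$ in Proposition 2.1.
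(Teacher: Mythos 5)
Your proposal is correct and follows essentially the same route as the paper: split $[\bar f]_{s,p,\alpha}^p$ into the $\Omega\times\Omega$ part plus twice the cross term supported on $K\times\Omega^C$, use the positive gauge-distance $\delta$ between the compact set $K$ and $\HH^n\setminus\Omega$, and control the resulting tail integral uniformly in $\xi'\in K$ by the same polar-coordinate computation as in Proposition \ref{prop 2.1}. The only difference is one of detail: the paper justifies $\delta>0$ explicitly by a contradiction argument showing that gauge-convergence $d(\xi^{-1}\circ\xi'_m)\to 0$ forces Euclidean convergence $\xi'_m\to\xi$, whereas you assert it directly; the substance is identical.
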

\begin{proof}
We have
\begin{multline}
[\Bar{f}]_{s,p,\alpha}^p=\int_{\mathbb{H}^n} \int_{\mathbb{H}^n}\frac{|\Bar{f}(\xi)-\Bar{f}(\xi')|^pd\xi' d\xi}{d({\xi}^{-1}\circ \xi')^{Q+ps} |z-z'|^{(p-2)\alpha}}\\
=\int_{\Omega} \int_{\Omega}\frac{|f(\xi)-f(\xi')|^pd\xi' d\xi}{d({\xi}^{-1}\circ \xi')^{Q+ps} |z-z'|^{(p-2)\alpha}}+2\int_{K} \int_{\Omega^C}\frac{|f(\xi)|^pd\xi' d\xi}{d({\xi}^{-1}\circ \xi')^{Q+ps} |z-z'|^{(p-2)\alpha}}\\
=[f]^p_{s,p,\alpha,\Omega}+2\int_{K} |f(\xi)|^p\int_{\Omega^C}\frac{d\xi'}{d({\xi}^{-1}\circ \xi')^{Q+ps} |z-z'|^{(p-2)\alpha}} d\xi.
\end{multline}
We claim that $\exists\,\beta>0$ such that $d({\xi}^{-1}\circ \xi')\geq\beta$ for all $\xi\in K$ and $\xi'\in\Omega^C$. If not, then $\exists\,\xi\in K$ and $\{\xi_m'\}\subset\Omega^C$ such that $d({\xi}^{-1}\circ \xi_m')<\frac{1}{m}$ $\forall m\in\NN$. In particular, $|z-z_m'|<\frac{1}{m}$ and $|t_m'-t-2\langle y,x_m'\rangle+2\langle x,y_m'\rangle|<\frac{1}{m^2}$, where $\xi_m'=(x_m',y_m',t_m')=(z_m',t_m')$. Now
\begin{multline*}
|t_m'-t|\leq |t_m'-t-2\langle y,x_m'\rangle+2\langle x,y_m'\rangle|+2|\langle y,x_m'\rangle-\langle x,y_m'\rangle|\\
=|t_m'-t-2\langle y,x_m'\rangle+2\langle x,y_m'\rangle|+2|\langle(-y,x),z-z_m'\rangle|\\
\leq |t_m'-t-2\langle y,x_m'\rangle+2\langle x,y_m'\rangle|+2|z||z-z_m'|\leq \frac{1}{m^2}+\frac{2R}{m},
\end{multline*}
where $R>0$ is such that $K\subset B(0,R)$, which implies that the sequence $\{\xi_m'\}$ converges to $\xi$ and contradicts the fact $K$ is a compact subset of $\Omega$.\\
Hence we have for any $\xi\in K$,
\begin{multline*}
\int_{\Omega^C}\frac{d\xi'}{d({\xi}^{-1}\circ \xi')^{Q+ps} |z-z'|^{(p-2)\alpha}}\\
\leq\int_{B_\beta(\xi)^C}\frac{d\xi'}{d({\xi}^{-1}\circ \xi')^{Q+ps} |z-z'|^{(p-2)\alpha}}\leq \int_{B_\beta(0)^C}\frac{d\xi'}{d(\xi')^{Q+ps} |z'|^{(p-2)\alpha}}.
\end{multline*}
One can show the last integral is finite by an argument similar to one  as in Proposition \ref{prop 2.1}.
This gives
\begin{equation}
[\Bar{f}]_{s,p,\alpha}^p\leq[f]^p_{s,p,\alpha,\Omega}+C\lVert f\rVert_{L^p(\Omega)}^p.
\end{equation}
This completes the proof.
\end{proof}
\begin{corollary}\label{coro 1}
Let $1\leq p<\infty$, $s\in(0,1)$ and $\alpha\in\RR$ be such that $ps+(p-2)\alpha<Q$. Let $\Omega\subset\HH^n$ be an open set and $K$ be any compact subset of $\Omega$. Then there exists a positive constant $C$, which depends on $n$, $p$, $s$, $\alpha$ ,$K,$ $\Omega$ and is such that
\begin{equation}\label{eq frac-sob in H^n 1}
\lVert f\rVert_{L^{p^*}(K)}\leq C\lVert f\rVert_{s,p,\alpha,\Omega},\  \forall f\in C(\bar{\Omega}),
\end{equation}
where $p^*=\frac{Qp}{Q-ps-(p-2)\alpha}$.
\end{corollary}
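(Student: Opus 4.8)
The plan is to localise $f$ by a smooth cut-off supported in $\Omega$ and transport the inequality onto all of $\HH^n$, where the Sobolev embedding \eqref{eq frac-sob in H^n} is available. I would first fix $\eta\in C_c^\infty(\HH^n)$ with $0\le\eta\le 1$, $\eta\equiv 1$ on $K$, and $K':=\mathrm{supp}\,\eta\Subset\Omega$; this is possible because $K$ is compact and $\Omega$ open. The heart of the argument is the multiplicative bound
\[
[f\eta]_{s,p,\alpha,\Omega}\le C\,\lVert f\rVert_{s,p,\alpha,\Omega},\qquad f\in C(\bar\Omega),
\]
with $C$ depending on $K,\Omega$ through $\eta$. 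Granting it, $f\eta\in W^{s,p,\alpha}(\Omega)$ with $\lVert f\eta\rVert_{s,p,\alpha,\Omega}\le C\lVert f\rVert_{s,p,\alpha,\Omega}$, and since $f\eta$ is supported in the compact set $K'\Subset\Omega$, Proposition~\ref{prop 2.2} produces an extension $\overline{f\eta}\in W^{s,p,\alpha}(\HH^n)$ with $\lVert\overline{f\eta}\rVert_{s,p,\alpha}\le C\lVert f\rVert_{s,p,\alpha,\Omega}$; a standard mollification of the continuous, compactly supported $\overline{f\eta}$ (convolution with a dilated approximate identity, compatible with the weight $|z-z'|^{(p-2)\alpha}$ since it depends only on $z-z'$) shows $\overline{f\eta}\in W_0^{s,p,\alpha}(\HH^n)$. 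Then \eqref{eq frac-sob in H^n} applies (the hypothesis $ps+(p-2)\alpha<Q$ being exactly what is needed), and using $\eta\equiv1$ on $K$,
\[
\lVert f\rVert_{L^{p^*}(K)}=\lVert f\eta\rVert_{L^{p^*}(K)}\le\lVert\overline{f\eta}\rVert_{L^{p^*}(\HH^n)}\le C[\,\overline{f\eta}\,]_{s,p,\alpha}\le C\lVert f\rVert_{s,p,\alpha,\Omega},
\]
which is \eqref{eq frac-sob in H^n 1}.

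To establish the multiplicative bound I would use the identity $f\eta(\xi)-f\eta(\xi')=\eta(\xi)\bigl(f(\xi)-f(\xi')\bigr)+f(\xi')\bigl(\eta(\xi)-\eta(\xi')\bigr)$, which gives
\[
[f\eta]_{s,p,\alpha,\Omega}^p\le 2^{p-1}\lVert\eta\rVert_{L^\infty}^p\,[f]_{s,p,\alpha,\Omega}^p+2^{p-1}\int_\Omega|f(\xi')|^p\,\Psi(\xi')\,d\xi',\qquad \Psi(\xi'):=\int_\Omega\frac{|\eta(\xi)-\eta(\xi')|^p\,d\xi}{d(\xi^{-1}\circ\xi')^{Q+ps}\,|z-z'|^{(p-2)\alpha}},
\]
so it is enough to bound $\Psi$ uniformly in $\xi'$. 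Since $\eta\in C_c^\infty(\HH^n)$ has bounded horizontal gradient, $|\eta(\xi)-\eta(\xi')|\le C\min\{1,\,d(\xi^{-1}\circ\xi')\}$. Splitting $\Psi(\xi')$ according to whether $d(\xi^{-1}\circ\xi')\ge 1$ (where one uses $|\eta(\xi)-\eta(\xi')|\le C$) or $<1$ (where one uses $|\eta(\xi)-\eta(\xi')|\le C\,d(\xi^{-1}\circ\xi')$), and performing the translation/inversion substitution of Proposition~\ref{prop 2.1} (so that $|z-z'|$ becomes $|\omega|$ for the new variable $w=(\omega,\nu)$), the matter is reduced, uniformly in $\xi'$, to the finiteness of
\[
\int_{\{d(w)\ge 1\}}\frac{dw}{d(w)^{Q+ps}\,|\omega|^{(p-2)\alpha}}\qquad\text{and}\qquad\int_{\{d(w)<1\}}\frac{dw}{d(w)^{Q+ps-p}\,|\omega|^{(p-2)\alpha}} .
\]
The first integral is exactly of the type $J_1+J_2+J_3$ estimated in the proof of Proposition~\ref{prop 2.1} (finite since $Q+ps>2$, $ps+(p-2)\alpha>0$ and $(p-2)\alpha<Q-2$), so it poses no difficulty.

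The main obstacle is the second integral near the origin. Passing to polar coordinates in $\omega\in\RR^{2n}$ and substituting $\nu=|\omega|^2 t$, as for $J_1$ in Proposition~\ref{prop 2.1}, its convergence at $w=0$ reduces to $\int_0^1 r^{\,p(1-s)-(p-2)\alpha-1}\,dr<\infty$, that is, to $(p-2)\alpha<p(1-s)$: the Lipschitz gain lowers the exponent from $Q+ps$ to $Q+ps-p$, and this is precisely what has to absorb the singularity $d(w)^{-(Q+ps)}$ at the origin. Together with $(p-2)\alpha<Q-2$, this is exactly the range of $(s,p,\alpha)$ in which $C_c^\infty(\HH^n)$ has finite $[\cdot]_{s,p,\alpha}$-seminorm, i.e.\ in which the target space $W_0^{s,p,\alpha}(\HH^n)$ is non-trivial, in the sense of \cite{Adi-Mallick}; outside that range \eqref{eq frac-sob in H^n 1} carries no real content. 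This dependence on the standing hypotheses is the point deserving the most care; once $\sup_{\xi'}\Psi(\xi')<\infty$ is in hand, $\int_\Omega|f(\xi')|^p\Psi(\xi')\,d\xi'\le\bigl(\sup_{\xi'}\Psi(\xi')\bigr)\lVert f\rVert_{L^p(\Omega)}^p$, the multiplicative bound follows, and with it the corollary.
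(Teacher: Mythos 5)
Your argument is the same as the paper's: multiply by a cut-off $\phi\in C_c^\infty(\HH^n)$ equal to $1$ on $K$ and supported in a compact $K'\Subset\Omega$, split $[f\phi]_{s,p,\alpha,\Omega}$ by the Leibniz-type identity, reduce matters to the uniform finiteness of $\int_{B(\xi,1)}d(\xi^{-1}\circ\xi')^{-(Q+ps-p)}|z-z'|^{-(p-2)\alpha}\,d\xi'$ and of the corresponding integral over $B(\xi,1)^{C}$, and then combine Proposition~\ref{prop 2.2} with the global Sobolev inequality \eqref{eq frac-sob in H^n}. The only point of divergence is that the paper disposes of those two integrals by citing \cite{Adi-Mallick} (Proposition 3.1), whereas you compute the near-diagonal one and make explicit the resulting constraint $(p-2)\alpha<p(1-s)$ (in the regime $p(1-s)<Q-2$); since that constraint is not among the corollary's stated hypotheses, you should confirm it is genuinely subsumed by the non-triviality assumptions inherited from \cite{Adi-Mallick} rather than an additional restriction.
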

\begin{proof}
Let $K'$ be an open set such that $K\Subset K'\Subset\Omega$. Choose a function $\phi\in C_c^\infty(\HH^n)$ such that $0\leq\phi\leq1$, $\phi\equiv1$ in $K$ and $\phi\equiv0$ in $\HH^n\setminus K'$.\\
Clearly $f\phi\in C_c(\HH^n)$. Sobolev inequality \eqref{eq frac-sob in H^n} gives,
\begin{equation}
\left(\int_{K}|f|^{p^*}\right)^{\frac{1}{p^*}}\leq \left(\int_{\HH^n}|f\phi|^{p^*}\right)^{\frac{1}{p^*}}\leq C[f\phi]_{s,p,\alpha}.
\end{equation}
It follows from \eqref{eq 2.8},
\begin{equation}\label{eq 2.13}
\lVert f\rVert_{L^{p^*}(K)}\leq C\lVert f\phi\rVert_{s,p,\alpha,\Omega}\leq C(\lVert f\rVert_{L^{p}(\Omega)}+[f\phi]_{s,p,\alpha,\Omega}).
\end{equation}
We now estimate the quantity $[f\phi]_{s,p,\alpha,\Omega}$ in terms of $\lVert f\rVert_{s,p,\alpha,\Omega}$.
\begin{multline}
[f\phi]^p_{s,p,\alpha,\Omega}\leq2^{p-1}\int_{\Omega} |f(\xi)|^p\int_{\Omega}\frac{|\phi(\xi)-\phi(\xi')|^pd\xi' d\xi}{d({\xi}^{-1}\circ \xi')^{Q+ps} |z-z'|^{(p-2)\alpha}}\\
+2^{p-1}\int_{\Omega} \int_{\Omega}\frac{|f(\xi)-f(\xi')|^p|\phi(\xi')|^pd\xi' d\xi}{d({\xi}^{-1}\circ \xi')^{Q+ps} |z-z'|^{(p-2)\alpha}}\\
\leq 2^{2p-1}\int_{\Omega} |f(\xi)|^p\int_{B(\xi,1)}\frac{|\phi(\xi)-\phi(\xi')|^pd\xi' d\xi}{d({\xi}^{-1}\circ \xi')^{Q+ps} |z-z'|^{(p-2)\alpha}}\\
+2^{2p-1}\int_{\Omega} |f(\xi)|^p\int_{B(\xi,1)^C}\frac{d\xi' }{d({\xi}^{-1}\circ \xi')^{Q+ps} |z-z'|^{(p-2)\alpha}}d\xi+2^{p-1}[f]^p_{s,p,\alpha,\Omega}.
\end{multline}
In the last integral we use $\phi\leq1$.
Since $\phi\in C_c^\infty(\HH^n)$, $\exists M>0$ such that $\frac{|\phi(\xi)-\phi(\xi')|}{d({\xi}^{-1}\circ \xi')}\leq M$ for all $\xi,\xi'\in \Omega$,  (see \cite{Adi-Mallick}, Proposition 3.1). Hence for any $\xi\in\Omega$,
$$\int_{B(\xi,1)}\frac{|\phi(\xi)-\phi(\xi')|^pd\xi'}{d({\xi}^{-1}\circ \xi')^{Q+ps} |z-z'|^{(p-2)\alpha}}\leq M^p\int_{B(\xi,1)}\frac{d\xi'}{d({\xi}^{-1}\circ \xi')^{Q+ps-p} |z-z'|^{(p-2)\alpha}}.$$
Both the integrals
$$\int_{B(\xi,1)}\frac{d\xi'}{d({\xi}^{-1}\circ \xi')^{Q+ps-p} |z-z'|^{(p-2)\alpha}}d\xi$$
and
$$\int_{B(\xi,1)^C}\frac{d\xi' }{d({\xi}^{-1}\circ \xi')^{Q+ps} |z-z'|^{(p-2)\alpha}}$$
are shown to be finite in \cite{Adi-Mallick}, (Proposition 3.1). Hence
\begin{equation}\label{3.4}
[f\phi]^p_{s,p,\alpha,\Omega}\leq C\left(\lVert f\rVert_{L^p(\Omega)}^p+[f]^p_{s,p,\alpha,\Omega}\right)\leq C\lVert f\rVert_{s,p,\alpha,\Omega}^p.
\end{equation}
Inequality \eqref{eq frac-sob in H^n 1} follows from \eqref{eq 2.13} and \eqref{3.4}.
\end{proof}
\begin{theorem}\label{theorem 2.2}
Let $1\leq p<\infty$, $s\in(0,1)$ and $\alpha\in\RR$ be such $ps+(p-2)\alpha<Q$. Let $\Omega\subset\HH^n$ be an extension domain. Then there exists a positive constant $C=C(n,p,s,\alpha,\Omega)$ such that for any $q\in[p,p^*]$ and for any $f\in W^{s,p,\alpha}(\Omega)$, we have
\begin{equation}\label{2.8}
\lVert f\rVert_{L^q(\Omega)}\leq C\lVert f\rVert_{s,p,\alpha,\Omega}.
\end{equation}
In other word, $W^{s,p,\alpha}(\Omega)$ is continuously embedded in $L^q(\Omega)$ for any $q\in[p,p^*]$.
\end{theorem}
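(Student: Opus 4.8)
The plan is to reduce the claim to the two endpoint cases $q=p$ and $q=p^*$, together with an interpolation argument in between. Since $\Omega$ is an extension domain, for any $f \in W^{s,p,\alpha}(\Omega)$ there is $\bar f \in W_0^{s,p,\alpha}(\HH^n)$ with $\bar f|_\Omega = f$ and $\lVert \bar f\rVert_{s,p,\alpha}\leq C\lVert f\rVert_{s,p,\alpha,\Omega}$. First I would handle $q = p^*$: applying the Sobolev embedding \eqref{eq frac-sob in H^n} to $\bar f$ gives
\[
\lVert f\rVert_{L^{p^*}(\Omega)} \leq \lVert \bar f\rVert_{L^{p^*}(\HH^n)} \leq C[\bar f]_{s,p,\alpha} \leq C\lVert \bar f\rVert_{s,p,\alpha} \leq C\lVert f\rVert_{s,p,\alpha,\Omega},
\]
which is \eqref{2.8} for $q = p^*$. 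The case $q = p$ is immediate from the definition of the norm $\lVert f\rVert_{s,p,\alpha,\Omega}^p = \lVert f\rVert_{L^p(\Omega)}^p + [f]_{s,p,\alpha,\Omega}^p$, so $\lVert f\rVert_{L^p(\Omega)} \leq \lVert f\rVert_{s,p,\alpha,\Omega}$.

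For intermediate $q \in (p, p^*)$, the plan is to use the standard interpolation inequality for Lebesgue norms: writing $\frac{1}{q} = \frac{\theta}{p} + \frac{1-\theta}{p^*}$ for a suitable $\theta \in (0,1)$, Hölder's inequality yields
\[
\lVert f\rVert_{L^q(\Omega)} \leq \lVert f\rVert_{L^p(\Omega)}^{\theta}\,\lVert f\rVert_{L^{p^*}(\Omega)}^{1-\theta}.
\]
Then I would bound each factor on the right by $C\lVert f\rVert_{s,p,\alpha,\Omega}$ using the two endpoint estimates just established, obtaining $\lVert f\rVert_{L^q(\Omega)} \leq C\lVert f\rVert_{s,p,\alpha,\Omega}$ with a constant depending only on $n$, $p$, $s$, $\alpha$, $\Omega$. (One should also remark that if $|\Omega| < \infty$ the interpolation step can be replaced by a direct application of Hölder with the constant absorbing a power of $|\Omega|$, but the interpolation argument works uniformly without any finiteness assumption on $\Omega$.) This completes the embedding $W^{s,p,\alpha}(\Omega) \hookrightarrow L^q(\Omega)$ for all $q \in [p, p^*]$.

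The only genuinely substantive ingredient is the endpoint $q = p^*$, which rests on the Sobolev embedding theorem for $W_0^{s,p,\alpha}(\HH^n)$ quoted from \cite{Adi-Mallick} and on the defining property of extension domains; everything else is the elementary Lebesgue-space interpolation. Thus I do not expect any real obstacle here — the main point is simply to have the extension operator available, which is exactly the hypothesis that $\Omega$ is an extension domain. If one wanted to avoid even this hypothesis for compactly supported $f$, Corollary \ref{coro 1} already provides a local version of the $q = p^*$ estimate, but for the stated theorem the extension-domain hypothesis makes the argument completely clean.
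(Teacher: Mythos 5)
Your proposal is correct and follows essentially the same route as the paper: the endpoints $q=p$ (trivial from the norm) and $q=p^*$ (extension property plus the Sobolev embedding \eqref{eq frac-sob in H^n}), combined with the standard Lebesgue interpolation $\lVert f\rVert_{L^q(\Omega)}\leq \lVert f\rVert_{L^p(\Omega)}^{\theta}\lVert f\rVert_{L^{p^*}(\Omega)}^{1-\theta}$ via H\"older's inequality for intermediate $q$. No gaps.
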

\begin{proof}
The case $q=p$ and $q=p^*$ is trivial. For the case $q\in(p,p^*)$, choose $0<\theta<1$ such that $\frac{1}{q}=\frac{\theta}{p}+\frac{1-\theta}{p^*}$. Inequality \eqref{2.8} follows from H\"older's inequality with the conjugets $\frac{p}{\theta q}$ and $\frac{p^*}{(1-\theta)q}$, \eqref{eq frac-sob in H^n} and \eqref{extension}.
\end{proof}
\begin{remark}
When $ps+(p-2)\alpha\rightarrow Q$, clearly the fractional critical Sobolev exponent $p^*$ goes to $\infty$. Hence if $ps+(p-2)\alpha=Q$, one may expect that $W^{s,p,\alpha}(\Omega)$ is continuously embedded in $L^q(\Omega)$ for any $q\geq p$. Our next result assures that this embedding is true even in the case $ps+(p-2)\alpha\geq Q$.
\end{remark}
\begin{theorem}
Let $1\leq p<\infty$, $s\in(0,1)$ and $\alpha\in\RR$ be such $ps+(p-2)\alpha\geq Q$. Let $\Omega\subset\HH^n$ be an extension domain. Then there exists a positive constant $C=C(n,p,s,\alpha,\Omega)$ such that for any $q\geq p$ and for any $f\in W^{s,p,\alpha}(\Omega)$, we have
\begin{equation}\label{2.9}
\lVert f\rVert_{L^q(\Omega)}\leq C\lVert f\rVert_{s,p,\alpha,\Omega}.
\end{equation}
\end{theorem}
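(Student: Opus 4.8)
The plan is to trade fractional smoothness for integrability. Fix $q>p$ (the case $q=p$ is immediate from $\lVert f\rVert_{L^p(\Omega)}\le\lVert f\rVert_{s,p,\alpha,\Omega}$). The point is that lowering the differentiability index from $s$ to a suitable $s'\in(0,s)$ with $ps'+(p-2)\alpha<Q$ produces a \emph{finite} critical exponent $p^*_{s'}:=\frac{Qp}{Q-ps'-(p-2)\alpha}$ that can be made as large as we please, while Proposition \ref{prop 2.1} guarantees $\lVert\cdot\rVert_{s',p,\alpha,\Omega}\le C\lVert\cdot\rVert_{s,p,\alpha,\Omega}$. So I would reduce the claim, through the extension operator, to the Sobolev embedding \eqref{eq frac-sob in H^n} on $\HH^n$ at level $s'$, interpolated against $L^p(\HH^n)$.

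First I would record the exponent arithmetic that makes the choice of $s'$ possible. From $ps+(p-2)\alpha\ge Q$ and the standing assumption $(p-2)\alpha<Q-2$ we get $0<\frac{Q-(p-2)\alpha}{p}\le s<1$; as $s'$ increases to $\frac{Q-(p-2)\alpha}{p}$ the denominator $Q-ps'-(p-2)\alpha$ decreases to $0$ through positive values, so $p^*_{s'}\to\infty$. I can therefore fix $s'\in\left(0,\frac{Q-(p-2)\alpha}{p}\right)$ with $p^*_{s'}\ge q$, and then automatically $0<s'<s<1$ and $ps'+(p-2)\alpha<Q$.

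Using that $\Omega$ is an extension domain, for $f\in W^{s,p,\alpha}(\Omega)$ I would pick $\bar f\in W_0^{s,p,\alpha}(\HH^n)$ with $\bar f|_\Omega=f$ and $\lVert\bar f\rVert_{s,p,\alpha}\le C\lVert f\rVert_{s,p,\alpha,\Omega}$. Approximating $\bar f$ in $\lVert\cdot\rVert_{s,p,\alpha}$ by functions $\bar f_m\in C_c^\infty(\HH^n)$ and applying Proposition \ref{prop 2.1} on $\HH^n$ (so that $\lVert\bar f_m-\bar f\rVert_{s',p,\alpha}\le C\lVert\bar f_m-\bar f\rVert_{s,p,\alpha}\to 0$) shows $\bar f\in W_0^{s',p,\alpha}(\HH^n)$ with $\lVert\bar f\rVert_{s',p,\alpha}\le C\lVert f\rVert_{s,p,\alpha,\Omega}$ and $\lVert\bar f\rVert_{L^p(\HH^n)}\le\lVert\bar f\rVert_{s,p,\alpha}\le C\lVert f\rVert_{s,p,\alpha,\Omega}$. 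Since $ps'+(p-2)\alpha<Q$, the embedding \eqref{eq frac-sob in H^n} at level $s'$ gives $\lVert\bar f\rVert_{L^{p^*_{s'}}(\HH^n)}\le C[\bar f]_{s',p,\alpha}\le C\lVert f\rVert_{s,p,\alpha,\Omega}$; choosing $\theta\in[0,1]$ with $\frac{1}{q}=\frac{\theta}{p}+\frac{1-\theta}{p^*_{s'}}$ and applying Hölder's inequality yields $\lVert\bar f\rVert_{L^q(\HH^n)}\le\lVert\bar f\rVert_{L^p(\HH^n)}^{\theta}\lVert\bar f\rVert_{L^{p^*_{s'}}(\HH^n)}^{1-\theta}\le C\lVert f\rVert_{s,p,\alpha,\Omega}$. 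Restricting to $\Omega$ gives $\lVert f\rVert_{L^q(\Omega)}=\lVert\bar f\rVert_{L^q(\Omega)}\le\lVert\bar f\rVert_{L^q(\HH^n)}\le C\lVert f\rVert_{s,p,\alpha,\Omega}$, as required.

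I do not anticipate a genuine obstacle: everything rests on tools already in hand, namely Proposition \ref{prop 2.1} and the embedding \eqref{eq frac-sob in H^n}. The one place that needs a little care is the density step, i.e.\ making sure the extension $\bar f$ lands in $W_0^{s',p,\alpha}(\HH^n)$, the space on which \eqref{eq frac-sob in H^n} is available, and not merely in $W^{s',p,\alpha}(\HH^n)$; this is exactly what the norm comparison of Proposition \ref{prop 2.1} supplies. Note that the constant produced this way also depends on $q$ through the choice of $s'$, consistently with reading the conclusion as continuity of the embedding into each $L^q(\Omega)$, $q\ge p$.
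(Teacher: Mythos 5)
Your proposal is correct and follows essentially the same route as the paper: choose $s'<s$ with $ps'+(p-2)\alpha<Q$ and $p^*_{s'}\geq q$, then combine Proposition \ref{prop 2.1} with the extension, the Sobolev embedding \eqref{eq frac-sob in H^n}, and H\"older interpolation (the paper packages the last three steps as Theorem \ref{theorem 2.2} applied at level $s'$). Your variant of extending at level $s$ first and only then lowering the index on all of $\HH^n$ is a small refinement, since it sidesteps the question of whether an extension domain for $W^{s,p,\alpha}$ is automatically one for $W^{s',p,\alpha}$, which the paper's appeal to Theorem \ref{theorem 2.2} at level $s'$ implicitly assumes.
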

\begin{proof}
Since $(p-2)\alpha<Q-2$, we can choose $0<s'<s$ such that $ps'+(p-2)\alpha<Q$ and $q\leq\frac{pQ}{Q-ps'-(p-2)\alpha}$. By Theorem \ref{theorem 2.2}, we have
\begin{equation}
\lVert f\rVert_{L^q(\Omega)}\leq C\lVert f\rVert_{s',p,\alpha,\Omega}.
\end{equation}
Inequality \eqref{2.9} then follows from Proposition \ref{prop 2.1}.
\end{proof}

\section{Proof of Theorem \ref{frac CKN in H^n}}
The following lemma, which uses Corollary \ref{coro 1} is key to the proof of Theorem \ref{frac CKN in H^n}.

\begin{lemma}\label{lemma 3.2}
Let $p>1$, $s\in(0,1)$, $\alpha\in\RR$, $\tau>1$,$\,\gamma\in\RR$, $0<a\leq1$ and $q>(1-a)\tau$ satisfy \eqref{tau gamma rel} and \eqref{range of gamma}. Let $\lambda>0$, $0<r'<r<R<R'$, $\mu,\mu' \in\RR$ and $\ell,\ell'>0$ be such that $\mu'<\mu$ and $\mu+\ell<\mu'+\ell'$ and consider the following sets:
$$\mathscr{D}_\lambda=\mathscr{D}_{r,R,\mu,\ell}(\lambda):=\{\xi=(z,t)\in\HH^n:\lambda r<|z|<\lambda R\ \mathrm{and}\ \ \mu\lambda^2<t<(\mu+\ell)\lambda^2\}$$
and $\mathscr{D}_\lambda'=\mathscr{D}_{r',R',\mu',\ell'}(\lambda)$. Then for $f\in C(\Bar{\mathscr{D}}_{\lambda}')$, we have
\begin{equation}\label{eq 3.3}
\left(\fint_{\mathscr{D}_\lambda}|f-(f)_{\mathscr{D}_\lambda'}|^{\tau}\right)^{\frac{1}{\tau}}\leq C\left(\lambda^{ps+(p-2)\alpha-Q}[f]_{s,p,\alpha,{\mathscr{D}_\lambda'}}^p\right)^{\frac{a}{p}}\left(\fint_{\mathscr{D}_\lambda'}|f|^{q} d\xi\right)^{\frac{1-a}{q}},
\end{equation}
where  the constant $C>0$ is independent of $f$ and $\lambda$, $\mu$ and $\mu'$.
\end{lemma}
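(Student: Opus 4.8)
The plan is to reduce the estimate \eqref{eq 3.3} to the scale-invariant case $\lambda=1$ by an anisotropic dilation of the Heisenberg group, and then invoke the Poincaré-type inequality \eqref{1.23} (which in turn rests on the local Sobolev embedding of Corollary \ref{coro 1}) on the fixed pair of domains. First I would record how the relevant quantities transform under the Heisenberg dilation $\delta_\lambda(z,t)=(\lambda z,\lambda^2 t)$, which is a group automorphism: it carries $\mathscr{D}_1=\mathscr{D}_{r,R,\mu,\ell}(1)$ onto $\mathscr{D}_\lambda$ and $\mathscr{D}_1'$ onto $\mathscr{D}_\lambda'$, multiplies Haar measure by $\lambda^{Q}$, scales $d(\xi^{-1}\circ\xi')$ by $\lambda$ and $|z-z'|$ by $\lambda$. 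Hence if $g(\xi):=f(\delta_\lambda\xi)$, then $\fint_{\mathscr{D}_1}|g-(g)_{\mathscr{D}_1'}|^\tau=\fint_{\mathscr{D}_\lambda}|f-(f)_{\mathscr{D}_\lambda'}|^\tau$, $\fint_{\mathscr{D}_1'}|g|^q=\fint_{\mathscr{D}_\lambda'}|f|^q$, and a direct change of variables in the Gagliardo double integral gives
\begin{equation*}
[g]_{s,p,\alpha,\mathscr{D}_1'}^p=\lambda^{-(Q+ps)-(p-2)\alpha}\cdot\lambda^{2Q}\,[f]_{s,p,\alpha,\mathscr{D}_\lambda'}^p=\lambda^{\,Q-ps-(p-2)\alpha}\,[f]_{s,p,\alpha,\mathscr{D}_\lambda'}^p,
\end{equation*}
so the factor $\lambda^{ps+(p-2)\alpha-Q}[f]_{s,p,\alpha,\mathscr{D}_\lambda'}^p$ appearing on the right of \eqref{eq 3.3} is exactly $[g]_{s,p,\alpha,\mathscr{D}_1'}^p$. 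Thus \eqref{eq 3.3} for $f$ on $(\mathscr{D}_\lambda,\mathscr{D}_\lambda')$ is equivalent to the scale-invariant inequality
\begin{equation*}
\left(\fint_{\mathscr{D}_1}|g-(g)_{\mathscr{D}_1'}|^{\tau}\right)^{\frac1\tau}\leq C\,[g]_{s,p,\alpha,\mathscr{D}_1'}^{a}\left(\fint_{\mathscr{D}_1'}|g|^{q}\right)^{\frac{1-a}{q}},\qquad g\in C(\bar{\mathscr D}_1'),
\end{equation*}
with $C$ independent of $\lambda,\mu,\mu'$ (the parameters $\mu,\mu'$ only translate in the central direction, which is a measure-preserving group translation, so they drop out after the same reduction).

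Next I would prove this fixed-scale inequality. Set $\theta:=1/\tau+\gamma(1+(p-2)\alpha/(ps))/Q$; the hypotheses \eqref{tau gamma rel}, \eqref{range of gamma} together with $q>(1-a)\tau$ guarantee $\tau\le q$ and that the exponents below are admissible. Write $v:=g-(g)_{\mathscr{D}_1'}$ and split
\begin{equation*}
\fint_{\mathscr{D}_1}|v|^{\tau}\le C\Big(\fint_{\mathscr{D}_1}|v|^{\tau\,\cdot\,\frac{q}{(1-a)\tau}\cdot(1-a)}\Big)\ \text{-type interpolation,}
\end{equation*}
more precisely interpolate $L^\tau$ between $L^q$ and $L^{r}$ for a suitable Sobolev exponent $r$: by Hölder there is $\theta_0\in[0,1]$ with $\tau=\theta_0 q+(1-\theta_0) r$ in the sense of $\|v\|_{L^\tau}\le\|v\|_{L^q}^{\theta_0}\|v\|_{L^{r}}^{1-\theta_0}$, where $r$ is chosen from the embedding range of Corollary \ref{coro 1}/Theorem \ref{theorem 2.2} applied with $\Omega=\mathscr{D}_1'$, $K=\overline{\mathscr D_1}$. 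Then $\|v\|_{L^{r}(\mathscr D_1)}\le C\|v\|_{s,p,\alpha,\mathscr D_1'}$, and since $v$ has mean zero over $\mathscr{D}_1'$, a standard Poincaré argument — subtract the average, use that $[v]_{s,p,\alpha,\mathscr D_1'}=[g]_{s,p,\alpha,\mathscr D_1'}$, and bound $\|v\|_{L^p(\mathscr D_1')}$ by $[g]_{s,p,\alpha,\mathscr D_1'}$ via a fractional Poincaré inequality on the bounded domain $\mathscr D_1'$ (which follows by contradiction/compactness from Proposition \ref{prop 2.1} and the compact embedding, exactly as in the classical case) — yields $\|v\|_{L^{r}(\mathscr D_1)}\le C[g]_{s,p,\alpha,\mathscr D_1'}$. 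For the $L^q$ factor, $\|v\|_{L^q(\mathscr D_1)}\le\|g\|_{L^q(\mathscr D_1')}+|(g)_{\mathscr D_1'}|\,|\mathscr D_1|^{1/q}\le C\|g\|_{L^q(\mathscr D_1')}$. Combining, $\|v\|_{L^\tau(\mathscr D_1)}\le C[g]_{s,p,\alpha,\mathscr D_1'}^{\theta_0}\|g\|_{L^q(\mathscr D_1')}^{1-\theta_0}$; a check of the exponent bookkeeping using \eqref{tau gamma rel} shows $\theta_0=a$, which is precisely the claimed inequality after dividing by $|\mathscr D_1|^{1/\tau}$ and $|\mathscr D_1'|^{(1-a)/q}$ (both fixed positive constants).

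The main obstacle I anticipate is not the dilation bookkeeping — which is routine once the transformation rules for $d(\cdot)$, $|z|$, and Haar measure are written down — but rather verifying uniformity of the constant $C$ across the whole admissible family of model domains $\mathscr D_1=\mathscr D_{r,R,\mu,\ell}(1)$, $\mathscr D_1'=\mathscr D_{r',R',\mu',\ell'}(1)$. The $\mu,\mu'$ dependence is killed by a central translation as noted; but $r,R,r',R',\ell,\ell'$ still vary, and a priori the Poincaré/embedding constant from Corollary \ref{coro 1} depends on the geometry of the pair $(K,\Omega)$. In the actual application (Lemma \ref{lemma 3.2} is used for the dyadic pieces $\mathscr A_{k,j}\Subset\mathscr B_{k,j}$), these shape parameters take only finitely many values, so one may either (a) state the lemma for a fixed such pair and absorb the finite maximum of constants, or (b) track the constant's dependence and note it stays bounded as long as $R/r,R'/r',\ell/\ell',\,\mathrm{dist}(\partial\mathscr D_1,\partial\mathscr D_1')$ and the overlap ratios are controlled away from $0$ and $\infty$ — which holds for the dyadic family. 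I would adopt approach (b), making explicit that "$C$ independent of $\lambda,\mu,\mu'$" is the precise claim, the remaining shape parameters being fixed by the later choice $\mathscr A_{k,j},\mathscr B_{k,j}$. Modulo this uniformity discussion, the proof is: (1) reduce to $\lambda=1$ by the Heisenberg dilation $\delta_\lambda$; (2) kill $\mu,\mu'$ by central translation; (3) fractional Poincaré on $\mathscr D_1'$ plus the local embedding of Corollary \ref{coro 1} on $(\overline{\mathscr D_1},\mathscr D_1')$; (4) Hölder interpolation between $L^q$ and the Sobolev exponent, with the exponent $a$ forced by \eqref{tau gamma rel}.
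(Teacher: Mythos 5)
Your overall architecture matches the paper's: reduce to $\lambda=1$ via the dilation $D_\lambda\xi=(\lambda z,\lambda^2t)$, interpolate $L^\tau$ between $L^q$ and a Sobolev exponent by H\"older (with \eqref{tau gamma rel} forcing the weights $a$ and $1-a$), and invoke Corollary \ref{coro 1} on the pair $(\overline{\mathscr{D}_1},\mathscr{D}_1')$. The paper makes the "suitable Sobolev exponent" explicit by setting $s'=s+\gamma/a$, $\alpha'=\alpha s'/s$, ${p'}^*=pQ/(Q-ps'-(p-2)\alpha')$, so that $1/\tau=a/{p'}^*+(1-a)/q$ exactly, and then uses $[\,\cdot\,]_{s',p,\alpha'}\leq C[\,\cdot\,]_{s,p,\alpha}$; this is where \eqref{range of gamma} (which gives $0\leq s'\leq s$) and $q>(1-a)\tau$ (which gives ${p'}^*>0$) are actually consumed, and your "exponent bookkeeping" should be carried out at this level of detail.

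The genuine gap is the Poincar\'e step. You propose to bound $\lVert g-(g)_{\mathscr{D}_1'}\rVert_{L^p(\mathscr{D}_1')}$ by $[g]_{s,p,\alpha,\mathscr{D}_1'}$ "by contradiction/compactness from Proposition \ref{prop 2.1} and the compact embedding, exactly as in the classical case." No compact embedding of $W^{s,p,\alpha}(\mathscr{D}_1')$ into $L^p(\mathscr{D}_1')$ is available here: such compactness would normally rest on extension or regularity properties of the domain for these anisotropic weighted Gagliardo spaces, and the authors state explicitly that they do not know whether these domains are extension domains — avoiding that issue is one of the points of the paper. The paper instead proves the Poincar\'e bound directly and elementarily: since any two points $\xi,\xi'\in\mathscr{D}_\lambda'$ satisfy $d(\xi^{-1}\circ\xi')\leq C\lambda$ (a Cauchy--Schwarz computation on the group law) and $|\mathscr{D}_\lambda'|=C\lambda^Q$, Jensen's inequality gives
\begin{equation*}
\fint_{\mathscr{D}_\lambda'}|f-(f)_{\mathscr{D}_\lambda'}|^{p}\,d\xi\leq C\lambda^{ps+(p-2)\alpha-Q}\,[f]_{s,p,\alpha,\mathscr{D}_\lambda'}^p,
\end{equation*}
with an explicit constant that is automatically uniform in $\lambda,\mu,\mu'$. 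You should replace the compactness argument by this computation. Two smaller points: your displayed scaling identity reads $\lambda^{-(Q+ps)-(p-2)\alpha}\cdot\lambda^{2Q}=\lambda^{Q-ps-(p-2)\alpha}$, which is the reciprocal of the correct factor $\lambda^{ps+(p-2)\alpha-Q}$ that you then (correctly) identify with the right-hand side of \eqref{eq 3.3}; and your uniformity discussion in the shape parameters is sound and consistent with the paper, which only claims independence of $\lambda,\mu,\mu'$.
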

\begin{proof}
From \eqref{tau gamma rel}, we have
$$\frac{1}{\tau}=a\left(\frac{1}{p}-\frac{s'\left(1+(p-2)\frac{\alpha}{ps}\right)}{Q}\right)+\frac{1-a}{q},$$
where $s'=s+\gamma/a<s$. Let $\alpha'=\alpha s'/s$ and ${p'}^*=\frac{pQ}{Q-ps'-(p-2)\alpha'}$. We have $1/\tau=a/{p'}^*+(1-a)/q$ and ${p'}^*>0$ follows from the range of $q$.\\
For any function $f$, we define the function $\Tilde{f}(\xi)=f(D_\lambda \xi)$, where $D_\lambda \xi=(\lambda z,\lambda^2t)$, is the dilation in $\HH^n$. Let us define the sets
$$\mathscr{D}_1=\{\xi=(z,t)\in\HH^n: r<|z|\leq  R\ \mathrm{and}\ \ \mu<t\leq \mu+\ell\},$$
$$\mathscr{D}_1'=\{\xi=(z,t)\in\HH^n: r'<|z|\leq R'\ \mathrm{and}\ \ \mu'<t\leq \mu'+\ell'\}.$$
Clearly, $f\in C^1(\Bar{\mathscr{D}}_\lambda')$ if and only if $\Tilde{f}\in C^1(\Bar{\mathscr{D}}_1')$. H\"older's inequality, inequality \eqref{eq frac-sob in H^n 1} with $K=\Bar{\mathscr{D}}_1$ and $\Omega=\mathscr{D}_1'$, and the fact $[\Tilde{f}]_{s',p,\alpha',\mathscr{D}_1'}\leq C[\Tilde{f}]_{s,p,\alpha,\mathscr{D}_1'}$ respectively gives
\begin{multline}\label{eq 3.4}
\left(\int_{\mathscr{D}_1}|\Tilde{f}-(f)_{\mathscr{D}_\lambda'}|^{\tau} \right)^{\frac{1}{\tau}}\leq \left(\int_{\mathscr{D}_1}|\Tilde{f}-(f)_{\mathscr{D}_\lambda'}|^{{p'}^*} \right)^{\frac{a}{{p'}^*}}\left(\int_{\mathscr{D}_1}|\Tilde{f}-(f)_{\mathscr{D}_\lambda'}|^{q} \right)^{\frac{1-a}{q}}\\
\leq C\left(\lVert \Tilde{f}-(f)_{\mathscr{D}_\lambda'}\rVert_{L^p(\mathscr{D}_1')}^p+[\Tilde{f}]_{s',p,\alpha',\mathscr{D}_1'}^p\right)^{\frac{a}{p}}\left(\int_{\mathscr{D}_1}|\Tilde{f}-(f)_{\mathscr{D}_\lambda'}|^{q} \right)^{\frac{1-a}{q}}\\
\leq C\left(\lVert \Tilde{f}-(f)_{\mathscr{D}_\lambda'}\rVert_{L^p(\mathscr{D}_1')}^p+[\Tilde{f}]_{s,p,\alpha,\mathscr{D}_1'}^p\right)^{\frac{a}{p}}\left(\int_{\mathscr{D}_1}|\Tilde{f}-(f)_{\mathscr{D}_\lambda'}|^{q} \right)^{\frac{1-a}{q}}.
\end{multline}
Using the change of variable $\Bar{\xi}=D_\lambda \xi$ in \eqref{eq 3.4} and obtain
\begin{multline}\label{eq 3.5}
\left(\fint_{\mathscr{D}_\lambda}|f-(f)_{\mathscr{D}_\lambda'}|^{\tau}\right)^{\frac{1}{\tau}}\\
\leq C\left(\fint_{\mathscr{D}_\lambda'}|f-(f)_{\mathscr{D}_\lambda'}|^{p}+\lambda^{ps+(p-2)\alpha-Q}[f]^p_{s,p,\alpha,\mathscr{D}_\lambda'}\right)^{\frac{a}{p}}\left(\fint_{\mathscr{D}_\lambda}|f-(f)_{\mathscr{D}_\lambda'}|^{q}\right)^{\frac{1-a}{q}}\\
\leq C\left(\fint_{\mathscr{D}_\lambda'}|f-(f)_{\mathscr{D}_\lambda'}|^{p}+\lambda^{ps+(p-2)\alpha-Q}[f]^p_{s,p,\alpha,\mathscr{D}_\lambda'}\right)^{\frac{a}{p}}\left(\fint_{\mathscr{D}_\lambda'}|f|^{q}\right)^{\frac{1-a}{q}}.
\end{multline}
Now
\begin{multline}\label{eq 3.6}
\int_{\mathscr{D}_\lambda'}|f(\xi)-(f)_{\mathscr{D}_\lambda'}|^{p}d\xi=\int_{\mathscr{D}_\lambda'}\left|f(\xi)-\frac{1}{|{\mathscr{D}_\lambda'}|}\int_{\mathscr{D}_\lambda'}f(\xi')d\xi'\right|^{p}d\xi\\
=\int_{\mathscr{D}_\lambda'}\left|\frac{1}{|{\mathscr{D}_\lambda'}|}\int_{\mathscr{D}_\lambda'}\left(f(\xi)-f(\xi')\right)d\xi'\right|^{p}d\xi\leq \int_{\mathscr{D}_\lambda'}\left(\fint_{\mathscr{D}_\lambda'}\left|f(\xi)-f(\xi')\right|d\xi'\right)^{p}d\xi.
\end{multline}
Using Jensen's inequality, we get
\begin{equation}\label{eq 3.7}
\left(\fint_{\mathscr{D}_\lambda'}\left|f(\xi)-f(\xi')\right|d\xi'\right)^{p}\leq\frac{1}{|{\mathscr{D}_\lambda'}|}\int_{\mathscr{D}_\lambda'}\left|f(\xi)-f(\xi')\right|^pd\xi'.
\end{equation}
Let $\xi,\xi'\in \mathscr{D}_\lambda'$, where $\xi=(z,t)=(x,y,t)$ and $\xi'=(z',t')=(x',y',t')$. We have
\begin{multline*}
d({\xi}^{-1}\circ \xi')=\left(|z'-z|^4+(t'-t+2(\langle x,y'\rangle-\langle y,x'\rangle))^2\right)^{\frac{1}{4}}\\
\leq \left(|z'-z|^4+2(t'-t)^2+8(\langle x,y'\rangle-\langle y,x'\rangle)^2\right)^{\frac{1}{4}}\\
=\left(|z'-z|^4+2(t'-t)^2+8(\langle (y',-x'),z-z'\rangle)^2\right)^{\frac{1}{4}}.
\end{multline*}
Cauchy-Schwarz inequality in $\RR^{2n}$ gives
\begin{multline*}
d({\xi}^{-1}\circ \xi')\leq \left(|z'-z|^4+2(t'-t)^2+8|z'|^2|z-z'|^2\right)^{\frac{1}{4}}\\
\leq((2{R'})^4\lambda^4+2\cdot{\ell'}^2\lambda^4+8{R'}^2(2{R'})^2\lambda^4)^{\frac{1}{4}}=C\lambda.
\end{multline*}
Note that $|{\mathscr{D}_\lambda'}|=C\lambda^Q$. Hence, from \eqref{eq 3.6} and \eqref{eq 3.7}, we get
\begin{multline}\label{eq 3.8}
\int_{\mathscr{D}_\lambda'}|f(\xi)-(f)_{\mathscr{D}_\lambda'}|^{p}d\xi\\
\leq C\lambda^{ps+(p-2)\alpha}\int_{\mathscr{D}_\lambda'}\int_{\mathscr{D}_\lambda'}\frac{|f(\xi)-f(\xi')|^p}{d({\xi}^{-1}\circ \xi')^{Q+ps} |z-z'|^{(p-2)\alpha}}d\xi=C\lambda^{ps+(p-2)\alpha}[f]_{s,p,\alpha,{\mathscr{D}_\lambda'}}^p,
\end{multline}
which gives
\begin{equation}\label{eq 3.9}
\fint_{\mathscr{D}_\lambda'}|f(\xi)-(f)_{\mathscr{D}_\lambda'}|^{p}d\xi\leq C\lambda^{ps+(p-2)\alpha-Q} [f]_{s,p,\alpha,{\mathscr{D}_\lambda'}}^p.
\end{equation}
Inequality \eqref{eq 3.3} follows from \eqref{eq 3.5} and \eqref{eq 3.9}.
\end{proof}
\begin{remark}\label{remark 4}
\emph{For $i=0,1,2$ and $3$ we consider the connected domain $\mathscr{B}_{k+1,j}\cup\mathscr{B}_{k,4j+i}$ as defined in \eqref{set 2A}, and we define the set
$$\mathscr{C}_{k,j}:=\{\xi=(z,t)\in\HH^n:2^{k-2}<|z|\leq 2^{k+3}, \ (j-2)2^{2k}<t<(j+3)2^{2k}\}.$$
Observe that $\mathscr{B}_{k+1,j}\cup\mathscr{B}_{k,4j+i}\Subset \mathscr{C}_{k+1,j}\cup\mathscr{C}_{k,4j+i}$ and the following inequality is true:
\begin{multline}\label{3.8}
\left(\fint_{\mathscr{B}_{k+1,j}\cup\mathscr{B}_{k,4j+i}}|f-(f)_{\mathscr{C}_{k+1,j}\cup\mathscr{C}_{k,4j+i}}|^{\tau}\right)^{\frac{1}{\tau}}\\
\leq C\left(2^{k(ps+(p-2)\alpha-Q)}[f]_{s,p,\alpha,\mathscr{C}_{k+1,j}\cup\mathscr{C}_{k,4j+i}}^p\right)^{\frac{a}{p}}\left(\fint_{\mathscr{C}_{k+1,j}\cup\mathscr{C}_{k,4j+i}}|f|^{q} d\xi\right)^{\frac{1-a}{q}},
\end{multline}
where the constant $C$ is independent of $k$ and $j$.}
\end{remark}
The following elementary lemma is established in \cite{Vivek}.
\begin{lemma}\label{lemma 4.1}
Let $\tau>1$ and $c>1$. Then for all $a,b\in\RR$, we have
\begin{equation}\label{eq lemma 4.1}
(|a|+|b|)^\tau\leq c|a|^\tau+\left(1-c^{-\frac{1}{\tau-1}}\right)^{1-\tau}|b|^\tau.
\end{equation}
\end{lemma}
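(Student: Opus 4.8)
The plan is to derive \eqref{eq lemma 4.1} from the convexity of the map $t\mapsto t^{\tau}$ on $[0,\infty)$, which holds precisely because $\tau>1$. Writing $x=|a|$ and $y=|b|$ (so that it suffices to treat $x,y\ge 0$), the first step is to establish the one‑parameter family of inequalities
\[
(x+y)^{\tau}\le \theta^{1-\tau}x^{\tau}+(1-\theta)^{1-\tau}y^{\tau},\qquad \theta\in(0,1),\ x,y\ge 0.
\]
This follows by writing $x+y$ as a convex combination, $x+y=\theta\cdot\tfrac{x}{\theta}+(1-\theta)\cdot\tfrac{y}{1-\theta}$, and applying Jensen's inequality to $t\mapsto t^{\tau}$:
\[
(x+y)^{\tau}=\Bigl(\theta\cdot\tfrac{x}{\theta}+(1-\theta)\cdot\tfrac{y}{1-\theta}\Bigr)^{\tau}\le \theta\Bigl(\tfrac{x}{\theta}\Bigr)^{\tau}+(1-\theta)\Bigl(\tfrac{y}{1-\theta}\Bigr)^{\tau}=\theta^{1-\tau}x^{\tau}+(1-\theta)^{1-\tau}y^{\tau}.
\]

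The second step is simply to choose $\theta$ so that the coefficient of $x^{\tau}$ becomes $c$, i.e.\ $\theta=c^{-1/(\tau-1)}$. Since $c>1$ and $\tau>1$, this $\theta$ lies in $(0,1)$, so the displayed inequality applies with this value; then $\theta^{1-\tau}=c$ and $(1-\theta)^{1-\tau}=\bigl(1-c^{-1/(\tau-1)}\bigr)^{1-\tau}$, which is exactly \eqref{eq lemma 4.1}.

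There is essentially no obstacle here: the argument is elementary, and the only point needing care is that the selected $\theta=c^{-1/(\tau-1)}$ genuinely belongs to $(0,1)$ so that the convexity step is legitimate; the degenerate cases $x=0$ or $y=0$ are automatically covered, since $\theta^{1-\tau}=c\ge 1$ and $(1-\theta)^{1-\tau}=(1-\theta)^{-(\tau-1)}\ge 1$. (An alternative, slightly more computational route would reduce by homogeneity to $y=1$ and minimize $g(x)=c\,x^{\tau}+\bigl(1-c^{-1/(\tau-1)}\bigr)^{1-\tau}-(x+1)^{\tau}$ over $x\ge0$ via its derivative, but the convexity argument avoids this calculus entirely.)
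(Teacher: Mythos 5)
Your proof is correct and complete: the convexity/Jensen step $(x+y)^\tau\le\theta^{1-\tau}x^\tau+(1-\theta)^{1-\tau}y^\tau$ is valid for $\theta\in(0,1)$ since $t\mapsto t^\tau$ is convex for $\tau>1$, and the choice $\theta=c^{-1/(\tau-1)}$ does lie in $(0,1)$ when $c>1$ and yields exactly the stated coefficients. The paper itself gives no proof of this lemma, only a citation to an external reference, so there is nothing to compare against; your argument is the standard elementary one for inequalities of this type and serves as a self-contained justification.
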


\begin{proof}[\textbf{Proof of Theorem} \ref{frac CKN in H^n}]
\textbf{Step 1:} 
Proof of (i). Let $\mathrm{supp}f\subset D_{2^{n_0+1}}(0)$ for some $n_0\in\ZZ$. For $k, j\in\ZZ$, we consider the sets $\{\mathscr{A}_k\}$, $\{\mathscr{A}_{k,j}\}$ and $\mathscr{B}_{k,j}$ defined in \eqref{decom a1}, \eqref{decom a2} and \eqref{set 2A} respectively.\\
By Lemma \ref{lemma 3.2} and by the relation \eqref{tau gamma rel} we have
\begin{multline}\label{eq 3.10}
\left(\fint_{\mathscr{A}_{k,j}}|f-(f)_{\mathscr{B}_{k,j}}|^{\tau} d\xi\right)^{\frac{1}{\tau}}\leq 
C\left(2^{k(ps+(p-2)\alpha-Q)}[f]_{s,p,\alpha,\mathscr{B}_{k,j}}^p\right)^{\frac{a}{p}}\left(\frac{1}{2^{kQ}}\int_{\mathscr{B}_{k,j}}|f|^{q} d\xi\right)^{\frac{1-a}{q}}\\
=C\,2^{-\frac{k}{\tau}\left(Q+\tau\gamma\left(1+(p-2)\frac{\alpha}{ps}\right)\right)}[f]_{s,p,\alpha,\mathscr{B}_{k,j}}^a\lVert f\rVert_{L^q(\mathscr{B}_{k,j})}^{1-a}.
\end{multline}
Observe that $\mathrm{supp}f\cap\mathscr{A}_k\subset\cup_{j=-n_k}^{n_k-1}\mathscr{A}_{k,j}$, where $n_k=2^{2(n_0+1-k)}$ and $|\mathscr{A}_{k,j}|=C2^{kQ}$ for all $k$ and $j$. Using the fact that, $d(\xi)\geq |z|>2^k$ in $\mathscr{A}_{k,j}$ for all $j$, we obtain
\begin{multline}\label{eq 3.11}
\int_{\mathscr{A}_{k,j}}d(\xi)^{\tau\gamma}|z|^{\tau\gamma(p-2)\frac{\alpha}{ps}}|f|^{\tau}d\xi \leq C2^{k\left(Q+\tau\gamma\left(1+(p-2)\frac{\alpha}{ps}\right)\right)}\left(\fint_{\mathscr{A}_{k,j}}|f-(f)_{\mathscr{B}_{k,j}}|^{\tau}+|(f)_{\mathscr{B}_{k,j}}| ^{\tau}\right)\\
\leq C[f]_{s,p,\alpha,\mathscr{B}_{k,j}}^{a\tau}\lVert f\rVert_{L^q(\mathscr{B}_{k,j})}^{(1-a)\tau}+C\,2^{k\left(Q+\tau\gamma\left(1+(p-2)\frac{\alpha}{ps}\right)\right)}|(f)_{\mathscr{B}_{k,j}}|^{\tau}.
\end{multline}
For $t_1,t_2\geq0$ with $t_1+t_2\geq1$, and for $x_k\geq0$ and $y_k\geq0$, one has
\begin{equation}\label{3.24}
\sum_{k\in\ZZ} x_k^{t_1}y_k^{t_2}\leq \left(\sum_{k\in\ZZ}x_k\right)^{t_1}\left(\sum_{k\in\ZZ}y_k\right)^{t_2}.
\end{equation}
We have $\frac{1}{\tau}\leq \frac{a}{p}+\frac{1-a}{q}$, which follows from \eqref{tau gamma rel} and \eqref{range of gamma}. Summation with respect to $j$ from $-n_k$ to $n_k-1$ on \eqref{eq 3.11} and then using the inequality \eqref{3.24} with $t_1=\frac{\tau a}{p}$ and $t_2=\frac{\tau(1-a)}{q}$ we get
\begin{multline}\label{eq 3.13}
\int_{\mathscr{A}_{k}}d(\xi)^{\tau\gamma}|z|^{\tau\gamma(p-2)\frac{\alpha}{ps}}|f|^{\tau}d\xi\leq C\left(\sum_{j=-n_k}^{n_k-1}[f]_{s,p,\alpha,\mathscr{B}_{k,j}}^p\right)^{\frac{a\tau}{p}}\left(\sum_{j=-n_k}^{n_k-1}\lVert f\rVert_{L^q(\mathscr{B}_{k,j})}^q\right)^{\frac{(1-a)\tau}{q}}\\
+C2^{k\left(Q+\tau\gamma\left(1+(p-2)\frac{\alpha}{ps}\right)\right)}\sum_{j=-n_k}^{n_k-1}|(f)_{\mathscr{B}_{k,j}}|^{\tau}.
\end{multline}
Let $m\in\ZZ$ be such that $n_0-m\geq2$. Again summation with respect to $k$ from $m$ to $n_0$ and then \eqref{3.24} gives
\begin{multline*}
\int_{\{\xi\in\HH^n:|z|>2^m\}} d(\xi)^{\tau\gamma}|z|^{\tau\gamma(p-2)\frac{\alpha}{ps}}|f|^{\tau}d\xi\\
\leq C\left(\sum_{k=m}^{n_0}\sum_{j=-n_k}^{n_k-1}[f]_{s,p,\alpha,\mathscr{B}_{k,j}}^p\right)^{\frac{a\tau}{p}}\left(\sum_{k=m}^{n_0}\sum_{j=-n_k}^{n_k-1}\lVert f\rVert_{L^q(\mathscr{B}_{k,j})}^q\right)^{\frac{(1-a)\tau}{q}}\\
+C\sum_{k=m}^{n_0}2^{k\left(Q+\tau\gamma\left(1+(p-2)\frac{\alpha}{ps}\right)\right)}\sum_{j=-n_k}^{n_k-1}|(f)_{\mathscr{B}_{k,j}}|^{\tau}.
\end{multline*}
It follows from Remark \ref{remark 1},
\begin{multline}\label{eq 3.18}
\int_{\{\xi\in\HH^n:|z|>2^m\}} d(\xi)^{\tau\gamma}|z|^{\tau\gamma(p-2)\frac{\alpha}{ps}}|f|^{\tau}d\xi\\
\leq C[f]_{s,p,\alpha}^{a\tau}\lVert f\rVert_{L^q(\HH^n)}^{(1-a)\tau}+C\sum_{k=m}^{n_0}2^{k\left(Q+\tau\gamma\left(1+(p-2)\frac{\alpha}{ps}\right)\right)}\sum_{j=-n_k}^{n_k-1}|(f)_{\mathscr{B}_{k,j}}|^{\tau}.
\end{multline}
Observe that, $n_k=4n_{k+1}$. For $-n_{k+1}\leq j\leq n_{k+1}-1$ and for $i=0,1,2$ and $3$, we have
\begin{multline*}
(f)_{\mathscr{B}_{k,4j+i}}-(f)_{\mathscr{B}_{k+1,j}}=\frac{1}{|\mathscr{B}_{k,4j+i}|}\int_{\mathscr{B}_{k,4j+i}}f-\frac{1}{|\mathscr{B}_{k+1,j}|}\int_{\mathscr{B}_{k+1,j}}f\\
=\frac{1}{|\mathscr{B}_{k,4j+i}|}\int_{\mathscr{B}_{k,4j+i}}\left(f-(f)_{\mathscr{C}_{k+1,j}\cup\mathscr{C}_{k,4j+i}}\right)-\frac{1}{|\mathscr{B}_{k+1,j}|}\int_{\mathscr{B}_{k+1,j}}\left(f-(f)_{\mathscr{C}_{k+1,j}\cup\mathscr{C}_{k,4j+i}}\right).
\end{multline*}
The set $\mathscr{C}_{k,j}$ is defined in Remark \ref{remark 4}. Since $|\mathscr{B}_{k+1,j}|\sim|\mathscr{B}_{k,4j+i}|$, we have
\begin{multline}
|(f)_{\mathscr{B}_{k,4j+i}}-(f)_{\mathscr{B}_{k+1,j}}|\leq \frac{C}{|\mathscr{B}_{k+1,j}\cup\mathscr{B}_{k,4j+i}|}\int_{\mathscr{B}_{k+1,j}\cup\mathscr{B}_{k,4j+i}}\left|f-(f)_{\mathscr{C}_{k+1,j}\cup\mathscr{C}_{k,4j+i}}\right|\\
\leq C\left(\fint_{\mathscr{B}_{k+1,j}\cup\mathscr{B}_{k,4j+i}}\left|f-(f)_{\mathscr{C}_{k+1,j}\cup\mathscr{C}_{k,4j+i}}\right|^\tau\right)^{\frac{1}{\tau}},
\end{multline}
using Jensen's inequality. Inequality \eqref{3.8} gives 
\begin{equation}\label{eq 3.19}
|(f)_{\mathscr{B}_{k,4j+i}}-(f)_{\mathscr{B}_{k+1,j}}|\leq C\,2^{-\frac{k}{\tau}\left(Q+\tau\gamma\left(1+(p-2)\frac{\alpha}{ps}\right)\right)}[f]_{s,p,\alpha,\mathscr{C}_{k+1,j}\cup\mathscr{C}_{k,4j+i}}^{a}\lVert f\rVert_{L^q(\mathscr{C}_{k+1,j}\cup\mathscr{C}_{k,4j+i})}^{(1-a)}.
\end{equation}
For simplicity of the notation we denote $Q'=Q+\tau\gamma\left(1+(p-2)\frac{\alpha}{ps}\right)$. Note that $Q'>2$, which follows from the assumption $1/\tau+\gamma\left(1+(p-2)\frac{\alpha}{ps}\right)/{Q}>\frac{2}{\tau Q}$. Let $\delta=2/\big(1+2^{Q'-2}\big)<1$. Applying Lemma \ref{lemma 4.1} with $c=\delta 2^{Q'-2}>1$ we obtain
$$|(f)_{\mathscr{B}_{k,4j+i}}|^\tau\leq\delta 2^{Q'-2}|(f)_{\mathscr{B}_{k+1,j}}|^\tau\\
+ C\,2^{-kQ'}[f]_{s,p,\alpha,\mathscr{C}_{k+1,j}\cup\mathscr{C}_{k,4j+i}}^{a\tau}\lVert f\rVert_{L^q(\mathscr{C}_{k+1,j}\cup\mathscr{C}_{k,4j+i})}^{(1-a)\tau}.$$
Therefore
\begin{equation}
2^{kQ'}\sum_{i=0}^3|(f)_{\mathscr{B}_{k,4j+i}}|^\tau\leq \delta2^{(k+1)Q'}|(f)_{\mathscr{B}_{k+1,j}}|^\tau+C\sum_{i=0}^3 [f]_{s,p,\alpha,\mathscr{C}_{k+1,j}\cup\mathscr{C}_{k,4j+i}}^{a\tau}\lVert f\rVert_{L^q(\mathscr{C}_{k+1,j}\cup\mathscr{C}_{k,4j+i})}^{(1-a)\tau}.
\end{equation}
Note that
\begin{equation}\label{sum}
\sum_{j=-n_{k+1}}^{n_{k+1}-1}\sum_{i=0}^3a_{k,4j+i}=\sum_{j=-n_k}^{n_k-1}a_{k,j}.
\end{equation}
Summation with respect to $j$ from $-n_{k+1}$ to $n_{k+1}-1$ gives
\begin{multline}\label{eq 3.25}
2^{kQ'}\sum_{j=-n_k}^{n_k-1}|(f)_{\mathscr{B}_{k,j}}|^\tau\leq \delta2^{(k+1)Q'}\sum_{j=-n_{k+1}}^{n_{k+1}-1}|(f)_{\mathscr{B}_{k+1,j}}|^\tau\\
+C\sum_{j=-n_{k+1}}^{n_{k+1}-1} \sum_{i=0}^3 [f]_{s,p,\alpha,\mathscr{C}_{k+1,j}\cup\mathscr{C}_{k,4j+i}}^{a\tau}\lVert f\rVert_{L^q(\mathscr{C}_{k+1,j}\cup\mathscr{C}_{k,4j+i})}^{(1-a)\tau}.
\end{multline}
Again summing \eqref{eq 3.25} with respect to $k$, we obtain
\begin{multline*}
\sum_{k=m}^{n_0}2^{kQ'}\sum_{j=-n_k}^{n_k-1}|(f)_{\mathscr{B}_{k,j}}|^\tau-\delta\sum_{k=m}^{n_0}2^{(k+1)Q'}\sum_{j=-n_{k+1}}^{n_{k+1}-1}|(f)_{\mathscr{B}_{k+1,j}}|^\tau\\
\leq C\sum_{k=m}^{n_0}\sum_{j=-n_{k+1}}^{n_{k+1}-1} \sum_{i=0}^3 [f]_{s,p,\alpha,\mathscr{C}_{k+1,j}\cup\mathscr{C}_{k,4j+i}}^{a\tau}\lVert f\rVert_{L^q(\mathscr{C}_{k+1,j}\cup\mathscr{C}_{k,4j+i})}^{(1-a)\tau},
\end{multline*}
which gives
\begin{multline*}
2^{mQ'}\sum_{j=-n_m}^{n_m-1}|(f)_{\mathscr{B}_{m,j}}|^\tau+\sum_{k=m+1}^{n_0}(1-{\delta})2^{kQ'}\sum_{j=-n_k}^{n_k-1}|(f)_{\mathscr{B}_{k,j}}|^\tau\\
\leq C\sum_{k=m}^{n_0}\sum_{j=-n_{k+1}}^{n_{k+1}-1} \sum_{i=0}^3 [f]_{s,p,\alpha,\mathscr{C}_{k+1,j}\cup\mathscr{C}_{k,4j+i}}^{a\tau}\lVert f\rVert_{L^q(\mathscr{C}_{k+1,j}\cup\mathscr{C}_{k,4j+i})}^{(1-a)\tau},
\end{multline*}
and it follows that
\begin{equation}\label{eq 3.20}
\sum_{k=m}^{n_0}(1-{\delta})2^{kQ'}\sum_{j=-n_k}^{n_k-1}|(f)_{\mathscr{B}_{k,j}}|^\tau\leq C\sum_{k=m}^{n_0}\sum_{j=-n_{k+1}}^{n_{k+1}-1} \sum_{i=0}^3 [f]_{s,p,\alpha,\mathscr{C}_{k+1,j}\cup\mathscr{C}_{k,4j+i}}^{a\tau}\lVert f\rVert_{L^q(\mathscr{C}_{k+1,j}\cup\mathscr{C}_{k,4j+i})}^{(1-a)\tau}.
\end{equation}
We apply \eqref{3.24} thrice on R.H.S and by a similar argument as discussed in Remark \ref{remark 1}, there is a constant $C$, independent of $k$ and $j$ such that
\begin{multline}\label{3.20 a}
\sum_{k=m}^{n_0}\sum_{j=-n_{k+1}}^{n_{k+1}-1} \sum_{i=0}^3 [f]_{s,p,\alpha,\mathscr{C}_{k+1,j}\cup\mathscr{C}_{k,4j+i}}^{a\tau}\lVert f\rVert_{L^q(\mathscr{C}_{k+1,j}\cup\mathscr{C}_{k,4j+i})}^{(1-a)\tau}\\
\leq \left(\sum_{k=m}^{n_0}\sum_{j=-n_{k+1}}^{n_{k+1}-1} \sum_{i=0}^3 [f]_{s,p,\alpha,\mathscr{C}_{k+1,j}\cup\mathscr{C}_{k,4j+i}}^p\right)^{\frac{a\tau}{p}}\left(\sum_{k=m}^{n_0}\sum_{j=-n_{k+1}}^{n_{k+1}-1} \sum_{i=0}^3\lVert f\rVert_{L^q(\mathscr{C}_{k+1,j}\cup\mathscr{C}_{k,4j+i})}^q\right)^{\frac{(1-a)\tau}{q}}\\
\leq C[f]_{s,p,\alpha}^{a\tau}\lVert f\rVert_{L^q(\HH^n)}^{(1-a)\tau}.
\end{multline}
Combining \eqref{eq 3.18}, \eqref{eq 3.20} and \eqref{3.20 a}, we get
\begin{equation}\label{eq 3.21}
\int_{\{\xi\in\HH^n:|z|>2^m\}} d(\xi)^{\tau\gamma}|z|^{\tau\gamma(p-2)\frac{\alpha}{ps}}|f|^{\tau}d\xi\leq C[f]_{s,p,\alpha}^{a\tau}\lVert f\rVert_{L^q(\HH^n)}^{(1-a)\tau}.
\end{equation}
Inequality \eqref{eq frac CKN in H^n i} follows by passing the limit $m\rightarrow-\infty$.\smallskip

\textbf{Step 2:} Proof of (ii). Choose $m\in\ZZ$ such that 
$$\mathrm{supp}(f)\cap \{\xi=(z,t):|z|<2^m\}=\phi.$$
Let $\delta=\big(1+2^{Q'-2}\big)/2$. Then $\delta<1$. Applying Lemma \ref{lemma 4.1} in \eqref{eq 3.19} with $c=\delta/2^{Q'-2}>1$, we get
$$|(f)_{\mathscr{B}_{k+1,j}}|^\tau\leq\delta 2^{-\left(Q'-2\right)}|(f)_{\mathscr{B}_{k,4j+i}}|^\tau+C\,2^{-kQ'}[f]_{s,p,\alpha,\mathscr{C}_{k+1,j}\cup\mathscr{C}_{k,4j+i}}^{a\tau}\lVert f\rVert_{L^q(\mathscr{C}_{k+1,j}\cup\mathscr{C}_{k,4j+i})}^{(1-a)\tau}.$$
Summation with respect to $i$ gives
$$2^{(k+1)Q'}|(f)_{\mathscr{B}_{k+1,j}}|^\tau\leq\delta 2^{kQ'}\sum_{i=0}^3|(f)_{\mathscr{B}_{k,4j+i}}|^\tau+C\sum_{i=0}^3[f]_{s,p,\alpha,\mathscr{C}_{k+1,j}\cup\mathscr{C}_{k,4j+i}}^{a\tau}\lVert f\rVert_{L^q(\mathscr{C}_{k+1,j}\cup\mathscr{C}_{k,4j+i})}^{(1-a)\tau}.$$
Again summing with respect to $j$ from $-n_{k+1}$ to $n_{k+1}-1$ and using \eqref{sum} we obtain
\begin{multline}\label{eq 3.23}
2^{(k+1)Q'}\sum_{j=-n_{k+1}}^{n_{k+1}-1}|(f)_{\mathscr{B}_{k+1,j}}|^\tau\leq \delta2^{kQ'}\sum_{j=-n_k}^{n_k-1}|(f)_{\mathscr{B}_{k,j}}|^\tau\\
+C\sum_{j=-n_{k+1}}^{n_{k+1}-1} \sum_{i=0}^3 [f]_{s,p,\alpha,\mathscr{C}_{k+1,j}\cup\mathscr{C}_{k,4j+i}}^{a\tau}\lVert f\rVert_{L^q(\mathscr{C}_{k+1,j}\cup\mathscr{C}_{k,4j+i})}^{(1-a)\tau}.
\end{multline}
Finally summation over $k$ from $m-1$ to $n_0-1$ and 
\eqref{3.20 a} gives
\begin{equation}\label{eq 3.24}
\sum_{k=m}^{n_0}(1-\delta)2^{kQ'}\sum_{j=-n_k}^{n_k-1}|(f)_{\mathscr{B}_{k,j}}|^\tau\leq C[f]_{s,p,\alpha}^{a\tau}\lVert f\rVert_{L^q(\HH^n)}^{(1-a)\tau}.
\end{equation}
Inequality \eqref{eq frac CKN in H^n ii} can be obtained by applying \eqref{eq 3.24} in \eqref{eq 3.18}. This completes the proof.
\end{proof}
\section{Proof of Theorem \ref{frac CKN in H^n b}}
For the proof of Theorem \ref{frac CKN in H^n b} we need the following two lemmas established in \cite{Adi-Mallick}.
\begin{lemma}\label{lemma 3.3}
Let $1\leq p<\infty$, $s\in(0,1)$ and $\alpha\in\RR$ be such that $(p-2)\alpha\geq 0$ and let $E\subset \HH^n$ be any measurable set with finite Lebesgue measure, i.e. $|E|<\infty$. Fix $\xi=(z,t)\in\HH^n$. Then there exists a positive constant $C$, depends only on $n$, $p$, $s$ and $\alpha$ such that
\begin{equation}\label{3.10}
\int_{E^C}\frac{d\xi'}{d({\xi}^{-1}\circ \xi')^{Q+ps} |z-z'|^{(p-2)\alpha}}\geq C|E|^{-\frac{ps+(p-2)\alpha}{Q}},
\end{equation}
where $\xi'=(z',t')\in E^C$.
\end{lemma}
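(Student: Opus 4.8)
The plan is to reduce the estimate to a statement about a single fixed weight independent of $\xi$, and then run a bathtub (layer–cake) argument based on the homogeneity of that weight under the Heisenberg dilations.

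\smallskip
\noindent\emph{Reduction to $\xi=0$.} Since the Haar measure on $\HH^n$ is the Lebesgue measure, left translation $L_\xi\colon\eta\mapsto\xi\circ\eta$ is measure preserving, and the horizontal component of $\xi\circ\eta$ equals $z+\eta_z$ (writing $\eta=(\eta_z,\eta_t)$). Hence the substitution $\eta=\xi^{-1}\circ\xi'$ is measure preserving, sends $E^C$ onto $F^C$ with $F:=\xi^{-1}\circ E$ (so $|F|=|E|$), and satisfies $d(\xi^{-1}\circ\xi')=d(\eta)$ and $|z-z'|=|\eta_z|$. Setting
\[
g(\eta):=\frac{1}{d(\eta)^{Q+ps}\,|\eta_z|^{(p-2)\alpha}},
\]
the claim reduces to: $\displaystyle\int_{F^C}g\,d\eta\ \ge\ C\,|F|^{-\frac{ps+(p-2)\alpha}{Q}}$ for every measurable $F$ with $|F|<\infty$.

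\smallskip
\noindent\emph{Homogeneity and distribution function of $g$.} Put $\beta_0:=Q+ps+(p-2)\alpha$. Under $D_\lambda\eta=(\lambda\eta_z,\lambda^2\eta_t)$ one has $g(D_\lambda\eta)=\lambda^{-\beta_0}g(\eta)$ and $|D_\lambda\Omega|=\lambda^Q|\Omega|$, whence $\{g>t\}=D_{t^{-1/\beta_0}}\{g>1\}$ and therefore $|\{g>t\}|=t^{-Q/\beta_0}|\{g>1\}|$ for all $t>0$. I would then check $|\{g>1\}|<\infty$: in the coordinates $z=\rho\omega$, $t=\rho^2\sigma$ used in the proof of Proposition~\ref{prop 2.1}, the set $\{g>1\}$ becomes $\{\rho<(1+\sigma^2)^{-(Q+ps)/(4\beta_0)}\}$, and integrating out $\rho$ leaves a dimensional constant times $\int_\RR(1+\sigma^2)^{-Q(Q+ps)/(4\beta_0)}\,d\sigma$, which converges precisely because the standing hypothesis $(p-2)\alpha<Q-2$ (together with $Q+ps>2$) forces $Q(Q+ps)>2\beta_0$, i.e.\ the exponent exceeds $\tfrac12$.

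\smallskip
\noindent\emph{Bathtub estimate.} We may assume $0<|F|<\infty$, the case $|F|=0$ being trivial since then $\int_{F^C}g=\int_{\HH^n}g=\infty$ (as $g(\eta)\ge d(\eta)^{-\beta_0}$ with $\beta_0>Q$). Choose $t^\ast>0$ with $|\{g>t^\ast\}|=2|F|$, i.e.\ $t^\ast=\big(|\{g>1\}|/(2|F|)\big)^{\beta_0/Q}$. Then $|\{g>t^\ast\}\setminus F|\ge|\{g>t^\ast\}|-|F|=|F|$ and $g>t^\ast$ on $\{g>t^\ast\}$, so
\[
\int_{F^C}g\ \ge\ \int_{\{g>t^\ast\}\setminus F}g\ \ge\ t^\ast\,|\{g>t^\ast\}\setminus F|\ \ge\ t^\ast|F|=\Big(\frac{|\{g>1\}|}{2}\Big)^{\beta_0/Q}|F|^{\,1-\beta_0/Q}.
\]
Since $1-\beta_0/Q=-\frac{ps+(p-2)\alpha}{Q}$, this is the asserted bound with $C=(|\{g>1\}|/2)^{\beta_0/Q}$, which depends only on $n,p,s,\alpha$. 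The only genuinely non-routine point is the finiteness of $|\{g>1\}|$ (and pinning down the exponents in the scaling); in fact even finiteness is not strictly needed, since if $|\{g>1\}|=\infty$ the left-hand side is $+\infty$ for every $F$ of finite measure and the inequality holds trivially — I would still include the computation because it is what produces the correct finite constant.
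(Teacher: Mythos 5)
Your proof is correct. Note that the paper itself does not prove this lemma --- it is quoted from \cite{Adi-Mallick} as a known ingredient --- so the comparison is with the cited argument rather than with an in-paper proof. Your route is the standard ``bathtub'' argument and is essentially the same as the one in the source: reduce to $\xi=0$ by left translation, then compare $E$ with a superlevel region of the kernel of measure $2|E|$. The only real difference is cosmetic: you use the exact superlevel sets $\{g>t\}$ of $g(\eta)=d(\eta)^{-(Q+ps)}|\eta_z|^{-(p-2)\alpha}$, which forces you to verify $|\{g>1\}|<\infty$ (correctly reduced to $Q(Q+ps)>2\beta_0$, which is where the standing hypothesis $(p-2)\alpha<Q-2$ enters), whereas the cited proof replaces the superlevel set by the gauge box $D_r(\xi)$ of \eqref{set D_r} with $|D_r(\xi)|=2|E|$: on $D_r(\xi)$ one has $d(\xi^{-1}\circ\xi')\leq Cr$ and $|z-z'|\leq r$ pointwise, so the kernel is bounded below by $Cr^{-Q-ps-(p-2)\alpha}$ and the same one-line estimate $\int_{E^C}\geq\int_{E^C\cap D_r(\xi)}\geq Cr^{-\beta_0}|E|$ closes the proof with no integrability computation. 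Your version buys the sharper explicit constant $(|\{g>1\}|/2)^{\beta_0/Q}$ at the cost of that computation; both are complete. All the individual steps you give (measure preservation of left translation, the homogeneity $g\circ D_\lambda=\lambda^{-\beta_0}g$, the distribution identity $|\{g>t\}|=t^{-Q/\beta_0}|\{g>1\}|$, and the exponent bookkeeping $1-\beta_0/Q=-\tfrac{ps+(p-2)\alpha}{Q}$) check out.
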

\begin{lemma}\label{lemma 3.4}
Let $1\leq p<\infty$, $s\in(0,1)$ and $\alpha\in\RR$ be such that $(p-2)\alpha\geq 0$ and $ps+(p-2)\alpha< Q$. Let $\{e_i\}$ and $\{d_i\}$ be two non-negative sequences of real numbers satisfying the following properties:\\
\emph{(a)} $e_i$ is decreasing;\\
\emph{(b)} $e_i=\sum_{j=i}^{\infty}d_j$.\\
Then the followings hold.\\
\emph{(i)} For any fixed $T>0$ we have
\begin{equation}\label{3.11}
\sum_{i\in\ZZ}e_i^{\frac{Q-ps-(p-2)\alpha}{Q}}T^i\leq T^{\frac{Q}{Q-ps-(p-2)\alpha}}\sum_{i\in\ZZ,e_i\neq0}e_{i+1}e_i^{-\frac{ps+(p-2)\alpha}{Q}}T^i.
\end{equation}
\emph{(ii)} Moreover, if $T>1$ then
\begin{equation}\label{3.12}
\sum_{i\in\ZZ,e_{i-1}\neq0}\, \sum_{j\geq i+1}T^{pi}e_{i-1}^{-\frac{ps+(p-2)\alpha}{Q}}d_j\leq \frac{1}{T^p-1}\sum_{i\in\ZZ,e_{i-1}\neq0}T^{pi}e_{i-1}^{-\frac{ps+(p-2)\alpha}{Q}}d_i.
\end{equation}
\end{lemma}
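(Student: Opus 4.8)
The plan is to obtain \eqref{3.11} from a single application of Hölder's inequality combined with a telescoping reindexing, and \eqref{3.12} from the classical discrete Hardy rearrangement. Throughout, write $\theta:=\frac{ps+(p-2)\alpha}{Q}$, so that the hypotheses $ps>0$, $(p-2)\alpha\ge0$ and $ps+(p-2)\alpha<Q$ give $\theta\in(0,1)$, $\frac{Q-ps-(p-2)\alpha}{Q}=1-\theta$ and $\frac{Q}{Q-ps-(p-2)\alpha}=\frac{1}{1-\theta}$. Since $\{e_i\}$ is nonnegative and decreasing, the set $\{i:e_i>0\}$ is either empty, all of $\ZZ$, or of the form $(-\infty,N]\cap\ZZ$; moreover $d_j\le e_{j-1}=\sum_{k\ge j-1}d_k$, so $e_{j-1}=0$ forces $d_j=0$. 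Hence every sum below is unambiguous once we agree to set a summand with a vanishing denominator equal to $0$, and with this convention we may reindex sums over $\ZZ$ freely. The case $\{e_i\}\equiv 0$ being trivial, assume $S:=\sum_{i\in\ZZ}e_i^{1-\theta}T^i>0$.

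For part (i), the crucial observation is the pointwise factorization, valid whenever $e_{i-1}>0$,
\[
e_i^{1-\theta}T^i=\bigl(e_i\,e_{i-1}^{-\theta}\,T^i\bigr)^{1-\theta}\,\bigl(e_{i-1}^{1-\theta}\,T^i\bigr)^{\theta}.
\]
Summing over $i$ and applying Hölder's inequality with the conjugate exponents $\tfrac{1}{1-\theta}$ and $\tfrac{1}{\theta}$ gives
\[
S\le\Bigl(\sum_{i}e_i\,e_{i-1}^{-\theta}\,T^i\Bigr)^{1-\theta}\Bigl(\sum_{i}e_{i-1}^{1-\theta}\,T^i\Bigr)^{\theta}.
\]
Replacing $i$ by $i+1$ in each of the two sums on the right identifies them as $T\sum_i e_{i+1}e_i^{-\theta}T^i$ and $TS$, respectively. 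Assuming $S<\infty$ (when $S=\infty$ the estimate is invoked only for $T>1$, where the right-hand side is also infinite, so there is nothing to prove), dividing the resulting inequality $S\le\bigl(T\sum_i e_{i+1}e_i^{-\theta}T^i\bigr)^{1-\theta}(TS)^{\theta}$ by $S^{\theta}$ and raising to the power $\tfrac{1}{1-\theta}$ yields precisely \eqref{3.11}. Note this argument uses only the monotonicity of $\{e_i\}$, not property (b).

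For part (ii), with $T>1$, interchange the order of summation:
\[
\sum_{i}\sum_{j\ge i+1}T^{pi}\,e_{i-1}^{-\theta}\,d_j=\sum_{j}d_j\sum_{i\le j-1}T^{pi}\,e_{i-1}^{-\theta}.
\]
For a fixed $j$ with $d_j>0$ one has $e_{j-2}\ge e_{j-1}\ge d_j>0$, and monotonicity gives $e_{i-1}^{-\theta}\le e_{j-2}^{-\theta}$ for all $i\le j-1$; summing the geometric series $\sum_{i\le j-1}T^{pi}=\tfrac{T^{pj}}{T^{p}-1}$ (the only place where $T>1$ is used) gives
\[
\sum_{i\le j-1}T^{pi}\,e_{i-1}^{-\theta}\le\frac{T^{pj}}{T^{p}-1}\,e_{j-2}^{-\theta}.
\]
Substituting this back and invoking $e_{j-2}^{-\theta}\le e_{j-1}^{-\theta}$ once more yields $\sum_{i}\sum_{j\ge i+1}T^{pi}e_{i-1}^{-\theta}d_j\le\tfrac{1}{T^{p}-1}\sum_{j}T^{pj}e_{j-1}^{-\theta}d_j$, which is \eqref{3.12}.

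I expect the one genuinely delicate point to be locating the factorization in part (i): once it is written down, Hölder together with the two harmless reindexings produces the sharp exponent $T^{Q/(Q-ps-(p-2)\alpha)}$ automatically. The geometric summation and the two uses of monotonicity in part (ii), and the bookkeeping of zero denominators throughout, are routine.
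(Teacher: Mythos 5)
The paper does not actually prove this lemma: it is quoted from \cite{Adi-Mallick}, so there is no internal proof to compare against. Your argument is the standard Maz'ya--type capacitary argument one would expect for such a statement. Part (ii) is correct and complete: Fubini, the geometric series $\sum_{i\le j-1}T^{pi}=T^{pj}/(T^p-1)$ (the only use of $T>1$), and two applications of monotonicity of $e_i^{-\theta}$, where $\theta:=\frac{ps+(p-2)\alpha}{Q}$. Part (i) is also correct whenever $0<S<\infty$: the pointwise factorization, the H\"older step with exponents $\frac{1}{1-\theta}$ and $\frac{1}{\theta}$, the two index shifts producing the factors $T$ and $TS$, and the division by $S^{\theta}$ all check out, and your bookkeeping of vanishing terms (using that $e_{i-1}=0$ forces $e_i=0$) is fine.

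The one genuine gap is your dismissal of the case $S=\infty$ in part (i). You assert that for $T>1$ the right-hand side of \eqref{3.11} is then automatically infinite; this is false for general sequences satisfying (a) and (b). With $r:=\frac{1}{1-\theta}$, take $T=2$, $e_{-k}=2^{r(2^k+k)}$ for $k\ge 0$ and $e_i=0$ for $i\ge 1$; this is a nonnegative decreasing sequence of real numbers which is the sequence of tail sums of $d_j:=e_j-e_{j+1}\ge 0$. The left side of \eqref{3.11} is $\sum_{k\ge0}2^{2^k}=\infty$, while the term of the right-hand sum at $i=-k$ equals $2^{2^{k-1}(2-r)-r}$, so the right side is finite whenever $r>2$, i.e.\ whenever $ps+(p-2)\alpha>Q/2$ (which is permitted by the hypotheses). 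So the lemma as literally stated needs an implicit boundedness assumption on $(e_i)$. This is harmless for the paper: in Proposition \ref{prop 4.1} one has $e_i=|E_i|\le|\Omega|<\infty$ and $e_i=0$ for $i$ large, so $S<\infty$ automatically when $T>1$, while for $T\le1$ and bounded, not identically zero $(e_i)$ both sides of \eqref{3.11} are indeed infinite. You should replace your one-line dismissal by this observation (or add the hypothesis $\sup_i e_i<\infty$ to the statement); with that repair your proof is complete and is, in all likelihood, the same argument as in the cited reference.
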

We next need the following  lemma to prove our theorem.
\begin{lemma}\label{lemma 3.5}
Let $\beta\leq0$ be such that $Q-2+\beta>0$. Then for any measurable set $D\subset B_{\lambda}(0)$, we have
\begin{equation}\label{3.13}
\int_D|z|^\beta dzdt\leq C\lambda^2|D|^{\frac{Q-2+\beta}{Q}},
\end{equation}
where the constant $C$ is independent of $\lambda$.
\end{lemma}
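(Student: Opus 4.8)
The plan is to reduce the arbitrary measurable set $D$ to a concrete sublevel set of $|z|$ on which $|z|^{\beta}$ takes its worst value, and then finish by an explicit computation in which the hypotheses $\beta\le 0$ and $Q-2+\beta>0$ are both used.

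\emph{Step 1: reduction by rearrangement.} Since $\beta\le 0$, the function $|z|^{\beta}$ is decreasing in $|z|$, so for a set of prescribed measure the integral $\int_D|z|^{\beta}$ is largest on a set of the form $D_\rho:=\{\xi=(z,t)\in B_\lambda(0):|z|<\rho\}$, $0\le\rho\le\lambda$. I would first note that $\rho\mapsto|D_\rho|$ is continuous and strictly increasing from $0$ to $|B_\lambda(0)|$, so (as $D\subset B_\lambda(0)$ forces $|D|\le|B_\lambda(0)|$) there is a $\rho_0\in[0,\lambda]$ with $|D_{\rho_0}|=|D|$. Using $|z|\le\rho_0$ on $D_{\rho_0}$, $|z|\ge\rho_0$ on $B_\lambda(0)\setminus D_{\rho_0}$, and $|D_{\rho_0}\setminus D|=|D\setminus D_{\rho_0}|$ (which follows from $|D_{\rho_0}|=|D|$), one obtains
$$\int_{D_{\rho_0}}|z|^{\beta}\,dz\,dt-\int_D|z|^{\beta}\,dz\,dt=\int_{D_{\rho_0}\setminus D}|z|^{\beta}\,dz\,dt-\int_{D\setminus D_{\rho_0}}|z|^{\beta}\,dz\,dt\ \ge\ \rho_0^{\beta}\bigl(|D_{\rho_0}\setminus D|-|D\setminus D_{\rho_0}|\bigr)=0,$$
so it suffices to prove \eqref{3.13} for $D=D_{\rho_0}$ (the case $\rho_0=0$, i.e.\ $|D|=0$, being trivial).

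\emph{Step 2: the model computation.} Writing $|z|=r$, $dz=\omega_{2n}r^{2n-1}\,dr$, and noting that the $t$-slice of $B_\lambda(0)$ over a point with $|z|=r$ has length $2\sqrt{\lambda^{4}-r^{4}}$, for $0<\rho\le\lambda$ I would bound $\sqrt{\lambda^{4}-r^{4}}\le\lambda^{2}$ to get, using $2n+\beta=Q-2+\beta>0$,
$$\int_{D_\rho}|z|^{\beta}\,dz\,dt=2\omega_{2n}\int_0^\rho r^{2n-1+\beta}\sqrt{\lambda^{4}-r^{4}}\,dr\le C\,\lambda^{2}\rho^{2n+\beta}.$$
For the matching lower bound on the measure, restrict the $r$-integral to $[0,\rho/2]$, where $\sqrt{\lambda^{4}-r^{4}}\ge\tfrac12\lambda^{2}$, giving $|D_\rho|\ge c\,\lambda^{2}\rho^{2n}$; since $\rho\le\lambda$ this yields $|D_\rho|\ge c\,\rho^{2}\rho^{2n}=c\,\rho^{Q}$. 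Raising the last inequality to the power $(2n+\beta)/Q=(Q-2+\beta)/Q$ (legitimate since $2n+\beta>0$) gives $\rho^{2n+\beta}\le C|D_\rho|^{(Q-2+\beta)/Q}$, and substituting into the upper bound produces exactly $\int_{D_\rho}|z|^{\beta}\,dz\,dt\le C\lambda^{2}|D_\rho|^{(Q-2+\beta)/Q}$ with $C=C(n,\beta)$ independent of $\lambda$; combined with Step~1 this proves the lemma.

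I do not anticipate a genuine obstacle. The two points needing a little care are: (a) all the integrals above are finite precisely because $2n+\beta>0$ makes $|z|^{\beta}$ locally integrable on $\RR^{2n}$, which is what justifies the rearrangement manipulation in Step~1 even when $\beta<0$; and (b) one must extract the bound $|D_\rho|\gtrsim\rho^{Q}$ (by spending the cheap inequality $\rho\le\lambda$) rather than only the weaker $|D_\rho|\gtrsim\lambda^{2}\rho^{2n}$, since the former is what yields the correct exponent $(Q-2+\beta)/Q$ on $|D|$ in \eqref{3.13} with the factor $\lambda^{2}$ out front.
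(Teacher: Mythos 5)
Your proof is correct, but it follows a genuinely different route from the paper's. The paper picks the ``Kor\'anyi box'' $D_R(0)=\{|z|<R,\,|t|<R^2\}$ whose measure equals $|D|$ (so $|D|\sim R^Q$ and $R\leq C\lambda$), splits $D$ into $D\cap D_R(0)$ and three pieces of the complement, and bounds each piece by crude pointwise estimates: the explicit radial integral on the box, $|z|^\beta\leq R^\beta$ where $|z|>R$, and $|t|\leq\lambda^2$ (from $D\subset B_\lambda(0)$) on the remaining piece; it never proves an extremality statement. You instead carry out a genuine bathtub rearrangement, replacing $D$ by the sublevel set $D_{\rho_0}=\{|z|<\rho_0\}\cap B_\lambda(0)$ of equal measure, and then finish with a single polar-coordinate computation. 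Your Step 1 is sound (the comparison only needs $\rho_0^\beta$ finite, and the degenerate case $|D|=0$ is handled), and Step 2 correctly pairs the upper bound $\int_{D_\rho}|z|^\beta\leq C\lambda^2\rho^{2n+\beta}$ with the lower bound $|D_\rho|\geq c\rho^Q$; the latter is the extra ingredient your route requires, since $|D_\rho|$ has no closed form, whereas the paper's box has measure exactly $cR^Q$ by inspection. Both arguments use the hypotheses in the same places ($\beta\leq0$ for monotonicity in $|z|$, $Q-2+\beta>0$ for convergence of the radial integral, and $D\subset B_\lambda(0)$ to cap the $t$-extent by $\lambda^2$), and both yield a constant depending only on $n$ and $\beta$. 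Your version buys a cleaner, case-free structure at the cost of invoking the rearrangement principle; the paper's is more pedestrian but entirely self-contained.
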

\begin{proof}
Let $R>0$ be such that $|D|=2\omega_{2n}R^Q=|D_R(0)|$, where the set $D_R(0)$ is defined in \eqref{set D_r}. Note that $R\leq C\lambda$. Now
$$I:=\int_D|z|^\beta dzdt=\int_{D\cap D_R(0)}|z|^\beta dzdt+\int_{D\cap D_R(0)^C}|z|^\beta dzdt.$$
Let us define the sets
$$D_{R,1}:=\{z\in\RR^{2n}:|z|>R\}\times\{t\in\RR:|t|>R^2\},$$
$$D_{R,2}:=\{z\in\RR^{2n}:|z|>R\}\times\{t\in\RR:|t|<R^2\},$$
$$D_{R,3}:=\{z\in\RR^{2n}:|z|<R\}\times\{t\in\RR:|t|>R^2\},$$
and the quantities
$$I_0:=\int_{D\cap D_R(0)}|z|^\beta dzdt,$$
$$I_i:=\int_{D\cap D_{R,i}}|z|^\beta dzdt,\ \ i=1,2,3.$$
Clearly, $D_R(0)^C=D_{R,1}\cup D_{R,2}\cup D_{R,3}$ and $I=I_0+I_1+I_2+I_3$. We will estimate the integrals separately.\\
\textbf{Estimate for $I_0$:}
\begin{multline}
I_0\leq\int_{D_R(0)}|z|^\beta dzdt= C\int_0^R\int_0^{R^2}r^{Q-3+\beta}drdt\\
=CR^2\frac{R^
{Q-2+\beta}}{Q-2+\beta}\leq C\lambda^2R^
{Q-2+\beta}=C\lambda^2|D|^{\frac{Q-2+\beta}{Q}}.
\end{multline}
\textbf{Estimate for $I_1$:}
\begin{equation}
I_1=\int_{D\cap D_{R,1}}|z|^\beta dzdt\leq R^\beta|D|=CR^{Q+\beta}=\leq C\lambda^2|D|^{\frac{Q-2+\beta}{Q}}.
\end{equation}
\textbf{Estimate for $I_2$:}
\begin{equation}
I_2=\int_{D\cap D_{R,2}}|z|^\beta dzdt\leq R^\beta|D|=CR^{Q+\beta}=\leq C\lambda^2|D|^{\frac{Q-2+\beta}{Q}}.
\end{equation}
\textbf{Estimate for $I_3$:}
\begin{equation}
I_3\leq C\int_0^R\int_0^{\lambda^2}r^{Q-3+\beta}drdt\leq C\lambda^2\frac{R^
{Q-2+\beta}}{Q-2+\beta}=C\lambda^2|D|^{\frac{Q-2+\beta}{Q}}.
\end{equation}
This completes the proof.
\end{proof}
Our next proposition  is a fractional CKN type inequality in a bounded domain which doesn't contain a ball around origin.
\begin{proposition}\label{prop 4.1}
Let $p$, $s$, $\alpha$, $\tau$, $a$, $\gamma$ and $q$ be as in Theorem \emph{\ref{frac CKN in H^n}} satisfying the relation \eqref{tau gamma rel}, \eqref{range of gamma} and $Q+\tau\gamma\left(1+(p-2)\frac{\alpha}{ps}\right)\geq0$. In addition we assume $\tau\gamma\leq-2$ when $Q+\tau\gamma\left(1+(p-2)\frac{\alpha}{ps}\right)\\>0$ and $\tau\gamma<-2$ when $Q+\tau\gamma\left(1+(p-2)\frac{\alpha}{ps}\right)=0$. Let $\Omega$ be a bounded open set in $\HH^n$ such that $\Omega\cap B_r(0)=\phi$ for some $r>0$. Then there exists a positive constant $C$ which depends on $n$, $p$, $s$, $\alpha$, $r$ and $\Omega$ such that for $q>\frac{(1-a)\tau Q}{Q-2+\tau\gamma(p-2)\frac{\alpha}{ps}}$, we have
\begin{equation}\label{3.18a}
\left(\int_{\Omega}d(\xi)^{\tau\gamma}|z|^{\tau\gamma(p-2)\frac{\alpha}{ps}}|f(\xi)|^{\tau}d\xi\right)^{\frac{1}{\tau}}\leq C[f]_{s,p,\alpha,\Omega}^a\lVert f\rVert_{L^q(\Omega)}^{1-a}\ \forall f\in C_c(\Omega).
\end{equation}
\end{proposition}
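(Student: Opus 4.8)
\emph{Reduction.} Since $\Omega$ is bounded and $\Omega\cap B_r(0)=\phi$, there is $R>r$ with $r\le d(\xi)\le R$ on $\Omega$, so (as $\tau\gamma\le 0$) the factor $d(\xi)^{\tau\gamma}$ lies between two positive constants depending only on $r,R,\tau,\gamma$. Put $\beta:=\tau\gamma(p-2)\alpha/(ps)$, which is $\le 0$ because $(p-2)\alpha\ge 0$ and $\gamma\le 0$ by \eqref{range of gamma}. Then \eqref{3.18a} is equivalent to
\[
\int_{\Omega}|z|^{\beta}|f(\xi)|^{\tau}\,d\xi\le C\,[f]_{s,p,\alpha,\Omega}^{a\tau}\,\lVert f\rVert_{L^{q}(\Omega)}^{(1-a)\tau},\qquad f\in C_c(\Omega).
\]
Write $\theta:=\tfrac{Q-2+\beta}{Q}$ and $\theta':=\tfrac{ps+(p-2)\alpha}{Q}$. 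Under the hypotheses one checks $\theta\in(0,1)$, and the condition $q>\tfrac{(1-a)\tau Q}{Q-2+\tau\gamma(p-2)\alpha/(ps)}$ is exactly $\tfrac{(1-a)\tau}{q}<\theta$; in particular $Q-2+\beta>0$, so Lemma \ref{lemma 3.5} is available for this $\beta$.

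\emph{A Maz'ya-type truncation.} Replacing $f$ by $|f|$ (which does not increase $[\,\cdot\,]_{s,p,\alpha,\Omega}$) assume $f\ge 0$. For $k\in\ZZ$ set $E_k:=\{\xi\in\Omega:f(\xi)>2^k\}$; since $f\in C_c(\Omega)$ each $E_k$ is a compact subset of $\Omega$ contained in $\mathrm{supp}\,f$, $E_k$ is empty for $k$ large, and $E_k\subset\Omega\subset B_R(0)$. Let $e_k:=|E_{k-1}|$ (non-increasing, and $\le|\Omega|$) and $d_k:=|E_{k-1}\setminus E_k|$, so that $e_k=\sum_{j\ge k}d_j$ — precisely the setup of Lemma \ref{lemma 3.4} — and note $\lVert f\rVert_{L^q(\Omega)}^q\asymp\sum_k 2^{kq}d_k$. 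Since $f\le 2^k$ on $E_{k-1}\setminus E_k$, Lemma \ref{lemma 3.5} (with $\lambda=R$) gives
\[
\int_{\Omega}|z|^{\beta}f^{\tau}\,d\xi=\sum_k\int_{E_{k-1}\setminus E_k}|z|^{\beta}f^{\tau}\le\sum_k 2^{k\tau}\int_{E_{k-1}}|z|^{\beta}\le C\sum_k 2^{k\tau}e_k^{\theta}.
\]
On the other side, for $\xi\in E_{k+1}$ and $\xi'\in E_{k-1}^{C}\cap\Omega$ one has $|f(\xi)-f(\xi')|\ge 3\cdot 2^{k-1}$, and the aim is to show, using Lemma \ref{lemma 3.3} together with the boundedness of $\Omega$ and $\Omega\cap B_r(0)=\phi$ (which keep the relevant region of integration inside $\Omega$ and away from the axis $\{z=0\}$, where the kernel factor $|z-z'|^{-(p-2)\alpha}$ is singular), that $\int_{E_{k-1}^{C}\cap\Omega}\frac{d\xi'}{d(\xi^{-1}\circ\xi')^{Q+ps}|z-z'|^{(p-2)\alpha}}\ge C\,e_k^{-\theta'}$ for such $\xi$, the constant depending only on $\Omega$ (here $e_k\le|\Omega|$ is used). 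Summing over disjoint shells in the $\xi$-variable and reindexing then yields
\[
[f]_{s,p,\alpha,\Omega}^{p}\ge C\sum_k 2^{kp}\,d_{k}\,e_k^{-\theta'}
\]
up to a harmless shift of indices.

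\emph{The discrete inequality, and the main obstacle.} It remains to combine the last three displays, i.e.\ to establish the purely sequential estimate
\[
\sum_k 2^{k\tau}e_k^{\theta}\le C\Big(\sum_k 2^{kp}d_k e_k^{-\theta'}\Big)^{a\tau/p}\Big(\sum_k 2^{kq}d_k\Big)^{(1-a)\tau/q}.
\]
For this I would first apply Lemma \ref{lemma 3.4}(i) to $(e_k)$ with exponent $\theta$ and $T=2^\tau$ to replace $e_k^{\theta}$ by $e_{k+1}e_k^{\theta-1}$, then Lemma \ref{lemma 3.4}(ii) (with $T=2^{\tau}>1$) to absorb $e_{k+1}=\sum_{j\ge k+1}d_j$ into a multiple of $d_k$, reducing the left side to $\sum_k 2^{k\tau}e_k^{\theta-1}d_k$; H\"older's inequality for series \eqref{3.24}, with the exponents $a\tau/p$ and $(1-a)\tau/q$ (whose sum is $\ge 1$ by \eqref{tau gamma rel} and \eqref{range of gamma}), then splits this into the two factors on the right, the slack coming from $e_k\le|\Omega|$ together with $\tfrac{(1-a)\tau}{q}<\theta$. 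Here the balance relation \eqref{tau gamma rel}, the condition $\tau\gamma\le-2$ and $Q+\tau\gamma(1+(p-2)\alpha/(ps))\ge 0$ (which confine $\theta$ and $\theta'$ to the ranges making the exponents of $d_k$ and $e_k$ match) are used in an essential way. The truly delicate step — and the reason this regime is \emph{not} contained in Theorem \ref{frac CKN in H^n} — is the seminorm lower bound above: on a bounded domain $[\,\cdot\,]_{s,p,\alpha,\Omega}$ only controls pairs in $\Omega\times\Omega$, so Lemma \ref{lemma 3.3} cannot be invoked on the full complement of $E_{k-1}$ as in \cite{Adi-Mallick}, and one must exhibit a comparison region lying inside $\Omega$ that still carries a fixed proportion of the mass predicted by Lemma \ref{lemma 3.3}; this is exactly where boundedness of $\Omega$, the hypothesis $\Omega\cap B_r(0)=\phi$, and $\tau\gamma\le-2$ come in.
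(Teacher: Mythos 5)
Your overall architecture is the same as the paper's: reduce the weight $d(\xi)^{\tau\gamma}$ to constants using $r\le d(\xi)\le R$ on $\Omega$, truncate over the level sets $E_i$, $D_i$, control the weighted integral by $\sum_i 2^{\tau i}(\cdot)^{\frac{Q-2+\beta}{Q}}$ via Lemma \ref{lemma 3.5}, bound the seminorm from below via Lemmas \ref{lemma 3.3}--\ref{lemma 3.4}, and finish with the discrete H\"older inequality \eqref{3.24}. But there is a genuine gap at the step you yourself single out as ``the truly delicate step'': you never establish the seminorm lower bound, and the intermediate inequality you propose to prove there is false. Specifically, the claim that for $\xi\in E_{k+1}$ one has $\int_{E_{k-1}^{C}\cap\Omega}d(\xi^{-1}\circ\xi')^{-Q-ps}|z-z'|^{-(p-2)\alpha}\,d\xi'\ge C e_k^{-\theta'}$ with $C=C(\Omega)$ cannot hold: if $f\ge 2^{k-1}$ on all of $\Omega$ except a set of measure $\varepsilon$ near $\partial\Omega$, the left side tends to $0$ as $\varepsilon\to0$ while the right side stays bounded below by $C|\Omega|^{-\theta'}>0$. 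So the ``comparison region inside $\Omega$'' you allude to cannot be produced in the form you state, and the proposal does not close this. (The paper itself finesses this point by identifying $\cup_{j\le i-2}D_j$ with $E_{i-1}^C$ and invoking Lemma \ref{lemma 3.3} on the full complement in $\HH^n$; whether that identification is airtight is a separate question, but your proposed replacement is not a proof of it.)

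A second, structural difference: you run the whole argument with the original exponent $s$ (your $\theta'=(ps+(p-2)\alpha)/Q$) and hope to absorb the resulting exponent mismatch using $e_k\le|\Omega|$. The paper instead replaces $s$ by $s'=s+\frac{\gamma}{a}+\frac{2}{a\tau}$, for which the balance relation \eqref{tau gamma rel} becomes the \emph{exact} identity \eqref{4.10}, $\frac{Q-2+\beta}{\tau Q}=a\frac{Q-ps'-(p-2)\alpha}{pQ}+\frac{1-a}{q}$; this is precisely where $\tau\gamma\le-2$ enters (it gives $s'\le s$, hence $[f]_{s',p,\alpha,\Omega}\le C[f]_{s,p,\alpha,\Omega}$ by Proposition \ref{prop 2.1}), where \eqref{range of gamma} gives $s'>0$, and where the stated range of $q$ gives $Q-ps'-(p-2)\alpha>0$. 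With $s'$ in hand the paper bounds $J\le C\sum_i2^{\tau i}d_i^{\frac{Q-2+\beta}{Q}}$ directly (Lemma \ref{lemma 3.5} applied to $D_i$, not to $E_{i-1}$), factors each term exactly as $(2^{pi}d_i^{1-\theta''})^{a\tau/p}(2^{qi}d_i)^{(1-a)\tau/q}$, and needs no sequence manipulation on the $J$ side at all; Lemma \ref{lemma 3.4} is used only once, on the seminorm side. Your route — $J\le C\sum 2^{k\tau}e_k^{\theta}$, then Lemma \ref{lemma 3.4}(i)--(ii) applied to exponents for which the lemma is not literally stated, then a slack argument from $e_k\le|\Omega|$ — can probably be repaired (the exponent inequality $\theta\ge\frac{a\tau}{p}(1-\theta')+\frac{(1-a)\tau}{q}$ does follow from $s'\le s$), but it is considerably more fragile and, combined with the unproven lower bound above, leaves the proposal short of a proof.
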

\begin{proof}
It is enough to prove \eqref{3.18a} for $0\leq f\in C_c(\Omega)$. For $i \in \ZZ,$ define the following sets:
$$E_i:=\{\xi\in\Omega: f(\xi)\geq2^i\},$$
$$D_i:=E_i\setminus E_{i+1}=\{\xi\in\Omega: 2^i\leq f(\xi)< 2^{i+1}\}.$$
Let $e_i=|E_i|$ and $d_i=|D_i|$. Clearly $d_i\leq e_i$ and they satisfy the conditions of Lemma \ref{lemma 3.4}.\smallskip

Relation \eqref{tau gamma rel} gives
\begin{equation}\label{4.10}
\frac{Q-2+\tau\gamma(p-2)\frac{\alpha}{ps}}{\tau Q}=a\left(\frac{Q-ps'-(p-2)\alpha}{pQ}\right)+\frac{1-a}{q},
\end{equation}
where $0<s'=s+\frac{\gamma}{a}+\frac{2}{a\tau}\leq s$ and $Q-ps'-(p-2)\alpha>0$ follows from the range of $q$. We first estimate the quantity $[f]_{s',p,\alpha,\Omega}$. Since $f\in C_c(\HH^n)$, for any $i,j\in \ZZ$ with $j\leq i-2$, if $\xi=(z,t)\in D_i$ and $\xi'=(z',t')\in D_j$, then we have $2^j\leq f(\xi')\leq 2^{j+1}\leq 2^{i-1}<2^i\leq f(\xi)\leq 2^{i+1}$, which gives $f(\xi)-f(\xi')\geq 2^{i-1}.$ Hence
\begin{multline*}
[f]_{s',p,\alpha,\Omega}^p\geq\sum_{i\in\ZZ}\, \sum_{j\leq i-2}\int_{D_i}\int_{D_j}\frac{|f(\xi)-f(\xi')|^pd\xi' d\xi}{d({\xi}^{-1}\circ \xi')^{Q+ps'} |z-z'|^{(p-2)\alpha}}\\
\geq\sum_{i\in\ZZ}\, \sum_{j\leq i-2}2^{p(i-1)}\int_{D_i}\int_{D_j}\frac{d\xi' d\xi}{d({\xi}^{-1}\circ \xi')^{Q+ps'} |z-z'|^{(p-2)\alpha}}\\
=\sum_{i\in\ZZ}2^{p(i-1)}\int_{D_i}\int_{\cup_{j\leq i-2}\,D_j}\frac{d\xi' d\xi}{d({\xi}^{-1}\circ \xi')^{Q+ps'} |z-z'|^{(p-2)\alpha}}\\
=\sum_{i\in\ZZ}2^{p(i-1)}\int_{D_i}\int_{E_{i-1}^C}\frac{d\xi' d\xi}{d({\xi}^{-1}\circ \xi')^{Q+ps'} |z-z'|^{(p-2)\alpha}}.
\end{multline*}
Using Lemma \ref{lemma 3.3} we obtain
\begin{equation}\label{3.19}
[f]_{s',p,\alpha,\Omega}^p\geq C\sum_{i\in\ZZ,e_{i-1}\neq0}2^{p(i-1)}e_{i-1}^{-\frac{ps'+(p-2)\alpha}{Q}}d_i.
\end{equation}
Putting $d_i=e_i-\sum_{j\geq i+1}d_j$ and then using Lemma \ref{lemma 3.4} (ii) with $T=2$, we get
\begin{multline*}
[f]_{s',p,\alpha,\Omega}^p\geq \frac{C}{2^p}\left(\sum_{i\in\ZZ,e_{i-1}\neq0}2^{pi}e_ie_{i-1}^{-\frac{ps'+(p-2)\alpha}{Q}}-\sum_{i\in\ZZ,e_{i-1}\neq0}\, \sum_{j\geq i+1}2^{pi}e_{i-1}^{-\frac{ps'+(p-2)\alpha}{Q}}d_j\right)\\
\geq \frac{C}{2^p}\left(\sum_{i\in\ZZ,e_{i-1}\neq0}2^{pi}e_ie_{i-1}^{-\frac{ps'+(p-2)\alpha}{Q}}-\frac{1}{2^p-1}\sum_{i\in\ZZ,e_{i-1}\neq0}2^{pi}e_{i-1}^{-\frac{ps'+(p-2)\alpha}{Q}}d_i\right)\\
\geq \frac{C}{2^p}\sum_{i\in\ZZ,e_{i-1}\neq0}2^{pi}e_ie_{i-1}^{-\frac{ps'+(p-2)\alpha}{Q}}-\frac{1}{2^p-1}[f]_{s',p,\alpha,\Omega}^p,\ \left[\mathrm{using}\ \mathrm{inequality}\ \eqref{3.19}\right]
\end{multline*}
which gives
$$[f]_{s',p,\alpha,\Omega}^p\geq\frac{2^p-1}{2^{2p}}C\sum_{i\in\ZZ,e_{i-1}\neq0}2^{pi}e_ie_{i-1}^{-\frac{ps'+(p-2)\alpha}{Q}}.$$
By Lemma \ref{lemma 3.4} (i) with $T=2^p$ we get
\begin{equation}\label{3.20}
[f]_{s',p,\alpha,\Omega}^p\geq C\sum_{i\in\ZZ}2^{pi}e_i^{\frac{Q-ps'-(p-2)\alpha}{Q}}.
\end{equation}
On the other hand,
\begin{equation}\label{3.21}
\int_{\Omega}|f(\xi)|^q d\xi=\sum_{i\in\ZZ}\int_{D_i}|f(\xi)|^q d\xi\geq\sum_{i\in\ZZ}2^{qi}d_i.
\end{equation}
Now let
$$J:=\int_{\Omega}|z|^{\tau\gamma(p-2)\frac{\alpha}{ps}}|f(\xi)|^{\tau}d\xi=\sum_{i\in\ZZ}\int_{D_i}|z|^{\tau\gamma(p-2)\frac{\alpha}{ps}}|f(\xi)|^{\tau}d\xi\leq\sum_{i\in\ZZ}2^{\tau(i+1)}\int_{D_i}|z|^{\tau\gamma(p-2)\frac{\alpha}{ps}}d\xi.$$
It follows from the assumptions that $Q-2+\tau\gamma(p-2)\frac{\alpha}{ps}>0$. Therefore, from Lemma \ref{lemma 3.5} with $\beta=\tau\gamma(p-2)\frac{\alpha}{ps},$ we get
\begin{equation}\label{3.22}
J\leq C\sum_{i\in\ZZ}2^{\tau i}d_i^{\frac{Q-2+\tau\gamma(p-2)\frac{\alpha}{ps}}{Q}}.
\end{equation}
Relation \eqref{4.10} gives
\begin{equation}\label{3.23}
J\leq C\sum_{i\in\ZZ}\left(2^{pi}d_i^{\frac{Q-ps'-(p-2)\alpha}{Q}}\right)^{\frac{a\tau}{p}}\left(2^{qi}d_i\right)^{\frac{(1-a)\tau}{q}}.
\end{equation}
By inequality \eqref{3.24} and by the fact $d_i\leq e_i$ we obtain
\begin{equation}\label{3.25}
J\leq C\left(\sum_{i\in\ZZ}2^{pi}e_i^{\frac{Q-ps'-(p-2)\alpha}{Q}}\right)^{\frac{a\tau}{p}}\left(\sum_{i\in\ZZ}2^{qi}d_i\right)^{\frac{(1-a)\tau}{q}}.
\end{equation}
Inequality \eqref{3.25}, \eqref{3.20} and \eqref{3.21} together imply
\begin{equation}\label{4.17}
\left(\int_{\Omega}|z|^{\tau\gamma(p-2)\frac{\alpha}{ps}}|f(\xi)|^{\tau}d\xi\right)^{\frac{1}{\tau}}\leq C [f]_{s',p,\alpha,\Omega}^a\lVert f\rVert_{L^q(\Omega)}^{1-a}.
\end{equation}
Inequality \eqref{3.18a} follows from \eqref{4.17} and the facts that $d(\xi)\geq r$ in $\Omega$ and $[f]_{s',p,\alpha,\Omega}\leq C[f]_{s,p,\alpha,\Omega}$.
\end{proof}
\begin{corollary}\label{coro 2}
Let $p$, $s$, $\alpha$, $\tau$, $a$, $\gamma$ and $q$ be as in Theorem \emph{\ref{frac CKN in H^n}}  satisfying the relation \eqref{tau gamma rel}, \eqref{range of gamma} and $Q+\tau\gamma\left(1+(p-2)\frac{\alpha}{ps}\right)\geq0$. In addition we assume $\tau\gamma\leq-2$ when $Q+\tau\gamma\left(1+(p-2)\frac{\alpha}{ps}\right)>0$ and $\tau\gamma<-2$ when $Q+\tau\gamma\left(1+(p-2)\frac{\alpha}{ps}\right)=0$. For $0<r<R$ consider the set $A_{r,R}=A_{r,R}(0)$ as defined in \eqref{set A_r,R}. Then there exists a constant $C>0$ depends on $n$, $p$, $s$, $\alpha$, $r$ and $R$ such that for $q>\frac{(1-a)\tau Q}{Q-2+\tau\gamma(p-2)\frac{\alpha}{ps}}$, we have
\begin{equation}\label{3.18}
\left(\int_{A_{r,R}}d(\xi)^{\tau\gamma}|z|^{\tau\gamma(p-2)\frac{\alpha}{ps}}|f(\xi)|^{\tau}d\xi\right)^{\frac{1}{\tau}}\leq C \lVert f\rVert_{s,p,\alpha,A_{\frac{r}{2},2R}}^a\lVert f\rVert_{L^q(A_{\frac{r}{2},2R})}^{1-a},\ \forall f\in C(\overline{{A}_{\frac{r}{2},2R}}).
\end{equation}
\end{corollary}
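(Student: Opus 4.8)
The plan is to deduce \eqref{3.18} from Proposition \ref{prop 4.1} applied on the slightly larger annulus $A_{r/2,2R}$, which is again a bounded open set disjoint from a ball about the origin, after multiplying $f$ by a cut-off that equals $1$ on $\overline{A_{r,R}}$. First I would record that $\overline{A_{r,R}}=\{r\le d(\xi)\le R\}$ is compact (it is closed and bounded, since $d(\xi)\le R$ forces $|z|\le R$ and $|t|\le R^2$) and is contained in $A_{r/2,2R}$, so $\overline{A_{r,R}}\Subset A_{r/2,2R}$. Hence one may fix $\psi\in C_c^\infty(\HH^n)$ with $0\le\psi\le1$, $\psi\equiv1$ on $\overline{A_{r,R}}$ and $\mathrm{supp}\,\psi\Subset A_{r/2,2R}$. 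For $f\in C(\overline{A_{r/2,2R}})$ the product $f\psi$ then belongs to $C_c(A_{r/2,2R})$ and coincides with $f$ on $A_{r,R}$. Since the integrand $d(\xi)^{\tau\gamma}|z|^{\tau\gamma(p-2)\frac{\alpha}{ps}}|f(\xi)|^{\tau}$ is nonnegative and $A_{r,R}\subset A_{r/2,2R}$, this gives
$$\int_{A_{r,R}}d(\xi)^{\tau\gamma}|z|^{\tau\gamma(p-2)\frac{\alpha}{ps}}|f(\xi)|^{\tau}d\xi\le\int_{A_{r/2,2R}}d(\xi)^{\tau\gamma}|z|^{\tau\gamma(p-2)\frac{\alpha}{ps}}|f\psi(\xi)|^{\tau}d\xi.$$

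Next I would apply Proposition \ref{prop 4.1} with $\Omega=A_{r/2,2R}$, which is disjoint from $B_{r/2}(0)$, to the admissible function $f\psi\in C_c(A_{r/2,2R})$. The conditions on $p,s,\alpha,\tau,a,\gamma$ (namely \eqref{tau gamma rel}, \eqref{range of gamma} and the sign/size hypotheses on $Q+\tau\gamma(1+(p-2)\frac{\alpha}{ps})$ and $\tau\gamma$) as well as the range $q>\frac{(1-a)\tau Q}{Q-2+\tau\gamma(p-2)\frac{\alpha}{ps}}$ are exactly those assumed here, so Proposition \ref{prop 4.1} yields
$$\left(\int_{A_{r/2,2R}}d(\xi)^{\tau\gamma}|z|^{\tau\gamma(p-2)\frac{\alpha}{ps}}|f\psi(\xi)|^{\tau}d\xi\right)^{\frac1\tau}\le C[f\psi]_{s,p,\alpha,A_{r/2,2R}}^{a}\lVert f\psi\rVert_{L^q(A_{r/2,2R})}^{1-a},$$
with $C$ depending on $n,p,s,\alpha,r$ and $A_{r/2,2R}$, i.e.\ ultimately on $n,p,s,\alpha,r,R$.

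Finally I would bound the two factors on the right by the corresponding norms of $f$ on $A_{r/2,2R}$. Since $0\le\psi\le1$ we have $\lVert f\psi\rVert_{L^q(A_{r/2,2R})}\le\lVert f\rVert_{L^q(A_{r/2,2R})}$. For the Gagliardo seminorm I would repeat the cut-off estimate from the proof of Corollary \ref{coro 1}: writing $|f\psi(\xi)-f\psi(\xi')|\le|f(\xi)|\,|\psi(\xi)-\psi(\xi')|+|f(\xi)-f(\xi')|\,|\psi(\xi')|$, using the Lipschitz-type bound $|\psi(\xi)-\psi(\xi')|\le M\,d(\xi^{-1}\circ\xi')$ valid for $\psi\in C_c^\infty(\HH^n)$ (\cite{Adi-Mallick}, Proposition 3.1), using $|\psi(\xi')|\le1$, and invoking the finiteness of the kernel integrals over $B_1(\xi)$ and $B_1(\xi)^C$ established there, one obtains $[f\psi]_{s,p,\alpha,A_{r/2,2R}}\le C\lVert f\rVert_{s,p,\alpha,A_{r/2,2R}}$ exactly as in \eqref{3.4}. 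Chaining the three displays proves \eqref{3.18}. I do not expect any genuine obstacle: the one point needing care is verifying $\overline{A_{r,R}}\Subset A_{r/2,2R}$, which is what makes both the cut-off construction and the reuse of \eqref{3.4} legitimate; everything else is a direct invocation of Proposition \ref{prop 4.1} and of the cut-off lemma already proved.
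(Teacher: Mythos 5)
Your proposal is correct and follows essentially the same route as the paper: multiply $f$ by a cut-off equal to $1$ on $A_{r,R}$ and compactly supported in $A_{\frac{r}{2},2R}$, apply Proposition \ref{prop 4.1} on $\Omega=A_{\frac{r}{2},2R}$ to $f\phi$, and then control $[f\phi]_{s,p,\alpha,A_{\frac{r}{2},2R}}$ and $\lVert f\phi\rVert_{L^q}$ via the cut-off estimate \eqref{3.4} and $0\leq\phi\leq1$. The only cosmetic difference is that the paper fixes the support of the cut-off inside $A_{\frac{3r}{4},\frac{3R}{2}}$ explicitly, while you only require it to be compactly contained in $A_{\frac{r}{2},2R}$; both are equivalent.
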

\begin{proof}
Let $\phi\in C_c^\infty(\HH^n)$ be such that $0\leq\phi\leq1$, $\phi\equiv1$ in $A_{r,R}$ and $\phi\equiv0$ in $A_{\frac{3r}{4},\frac{3R}{2}}^C$. Clearly $f\phi\in C_c(A_{\frac{r}{2},2R})$. By Proposition \ref{prop 4.1},
\begin{multline}\label{4.19}
\left(\int_{A_{r,R}}d(\xi)^{\tau\gamma}|z|^{\tau\gamma(p-2)\frac{\alpha}{ps}}|f(\xi)|^{\tau}d\xi\right)^{\frac{1}{\tau}}\\
\leq \left(\int_{A_{\frac{r}{2},2R}}d(\xi)^{\tau\gamma}|z|^{\tau\gamma(p-2)\frac{\alpha}{ps}}|f(\xi)|^{\tau}|\phi(\xi)|^{\tau} d\xi\right)^{\frac{1}{\tau}}
\leq C [f\phi]_{s,p,\alpha,A_{\frac{r}{2},2R}}^a\lVert f\phi\rVert_{L^q(A_{\frac{r}{2},2R})}^{1-a}.
\end{multline}
Inequality \eqref{3.18} follows from \eqref{4.19}, \eqref{3.4} and the fact $0\leq\phi\leq1$.
\end{proof}
\begin{lemma}\label{lemma 3.7}
Let $p$, $s$, $\alpha$, $\tau$, $a$, $\gamma$ and $q$ be as in Theorem \emph{\ref{frac CKN in H^n}} satisfying the relation \eqref{tau gamma rel}, \eqref{range of gamma} and $Q+\tau\gamma\left(1+(p-2)\frac{\alpha}{ps}\right)\geq0$. In addition we assume $\tau\gamma\leq-2$ when $Q+\tau\gamma\left(1+(p-2)\frac{\alpha}{ps}\right)>0$ and $\tau\gamma<-2$ when $Q+\tau\gamma\left(1+(p-2)\frac{\alpha}{ps}\right)=0$. Let $\lambda>0$, $0<r<R$ and denote the sets $A_{\lambda r,\lambda R}(0)$ and $A_{ \frac{\lambda r}{2},2\lambda R}(0)$ by $A_\lambda$ and $F_\lambda$ respectively. Then for $q>\frac{(1-a)\tau Q}{Q-2+\tau\gamma(p-2)\frac{\alpha}{ps}}$ and for $f\in C(\Bar{F}_\lambda)$, we have
\begin{multline}\label{3.26}
\left(\fint_{A_\lambda}d(\xi)^{\tau\gamma}|z|^{\tau\gamma(p-2)\frac{\alpha}{ps}}|f-(f)_{F_\lambda}|^{\tau}\right)^{\frac{1}{\tau}}\\
\leq C\lambda^{a\frac{ps+(p-2)\alpha-Q}{p}+\gamma\left(1+(p-2)\frac{\alpha}{ps}\right)}[f]_{s,p,\alpha,F_\lambda}^a\left(\fint_{F_\lambda}|f|^{q}d\xi\right)^{\frac{1-a}{q}},
\end{multline}
where the constant $C$ is independent of $\lambda$ and $f$.
\end{lemma}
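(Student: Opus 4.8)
The plan is to deduce \eqref{3.26} from Corollary \ref{coro 2} by a dilation argument, exactly parallel to the way Lemma \ref{lemma 3.2} was reduced to Corollary \ref{coro 1}. Fix the radii $0<r<R$ and, for $\lambda>0$, recall the dilation $D_\lambda\xi=(\lambda z,\lambda^2 t)$, which is a group automorphism of $\HH^n$ with Jacobian $\lambda^{Q}$. Set $A_1:=A_{r,R}(0)$, $F_1:=A_{\frac r2,2R}(0)$, and for $f\in C(\overline{F_\lambda})$ put $\tilde f:=f\circ D_\lambda\in C(\overline{F_1})$; then $D_\lambda$ maps $A_1$ onto $A_\lambda$ and $F_1$ onto $F_\lambda$. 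First I would record the elementary scaling identities
$$|A_\lambda|=\lambda^{Q}|A_1|,\qquad d(D_\lambda\xi)=\lambda\,d(\xi),\qquad (f)_{F_\lambda}=(\tilde f)_{F_1},\qquad \|\tilde f\|_{L^q(F_1)}^{q}=\lambda^{-Q}\|f\|_{L^q(F_\lambda)}^{q},$$
$$[\tilde f]_{s,p,\alpha,F_1}^{p}=\lambda^{\,ps+(p-2)\alpha-Q}\,[f]_{s,p,\alpha,F_\lambda}^{p},$$
the last one because $D_\lambda$ is an automorphism, so $d(\xi^{-1}\circ\xi')$ and $|z-z'|$ each scale by $\lambda$, combined with the two Jacobian factors (this is the same computation as \eqref{eq 3.8} in Lemma \ref{lemma 3.2}).

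Next I would apply Corollary \ref{coro 2}, with these fixed radii $r,R$, so that its constant does not depend on $\lambda$, not to $\tilde f$ itself but to $g:=\tilde f-(\tilde f)_{F_1}\in C(\overline{F_1})$. Subtracting a constant leaves the Gagliardo seminorm unchanged, so $[g]_{s,p,\alpha,F_1}=[\tilde f]_{s,p,\alpha,F_1}$; moreover $\|g\|_{L^p(F_1)}\le C[\tilde f]_{s,p,\alpha,F_1}$ by the Poincaré-type estimate established in \eqref{eq 3.6}--\eqref{eq 3.9} of the proof of Lemma \ref{lemma 3.2} (which applies on the bounded set $F_1$ because $d(\xi^{-1}\circ\xi')$ and, since $(p-2)\alpha\ge0$, also $|z-z'|^{(p-2)\alpha}$ are bounded above on $F_1\times F_1$), whence $\|g\|_{s,p,\alpha,F_1}\le C[\tilde f]_{s,p,\alpha,F_1}$. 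Finally $\|g\|_{L^q(F_1)}\le 2\|\tilde f\|_{L^q(F_1)}$, using the triangle inequality and $|(\tilde f)_{F_1}|\,|F_1|^{1/q}\le\|\tilde f\|_{L^q(F_1)}$ (Jensen/H\"older). Corollary \ref{coro 2} then gives
$$\left(\int_{A_1}d(\xi)^{\tau\gamma}|z|^{\tau\gamma(p-2)\frac{\alpha}{ps}}\,\bigl|\tilde f-(\tilde f)_{F_1}\bigr|^{\tau}\,d\xi\right)^{\frac1\tau}\le C\,[\tilde f]_{s,p,\alpha,F_1}^{a}\,\|\tilde f\|_{L^q(F_1)}^{1-a}.$$

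It then remains to change variables $\bar\xi=D_\lambda\xi$ on the left-hand side and collect the powers of $\lambda$. The weight $d(\xi)^{\tau\gamma}|z|^{\tau\gamma(p-2)\alpha/(ps)}$ contributes a factor $\lambda^{\tau\gamma\left(1+(p-2)\alpha/(ps)\right)}$ and $d\xi$ a factor $\lambda^{Q}$; dividing by $|A_\lambda|=\lambda^{Q}|A_1|$ turns $\int_{A_1}$ into $\fint_{A_\lambda}$ at the cost of $\lambda^{\gamma\left(1+(p-2)\alpha/(ps)\right)}$ after taking the $\tfrac1\tau$ power. On the right, $[\tilde f]_{s,p,\alpha,F_1}^{a}=\lambda^{a(ps+(p-2)\alpha-Q)/p}[f]_{s,p,\alpha,F_\lambda}^{a}$, while $\|\tilde f\|_{L^q(F_1)}^{1-a}=C\bigl(\fint_{F_\lambda}|f|^{q}\bigr)^{(1-a)/q}$ carries no power of $\lambda$ (the factor $\lambda^{-Q}$ cancels $|F_\lambda|=\lambda^{Q}|F_1|$). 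Multiplying the surviving $\lambda$-powers yields exactly $\lambda^{\,a\frac{ps+(p-2)\alpha-Q}{p}+\gamma\left(1+(p-2)\frac{\alpha}{ps}\right)}$, which is \eqref{3.26}. \textbf{The only step needing care} — the remainder being routine scaling bookkeeping — is the passage to $g=\tilde f-(\tilde f)_{F_1}$ before invoking Corollary \ref{coro 2}, so that the \emph{full} norm $\|\cdot\|_{s,p,\alpha,F_1}$ appearing there is absorbed into the \emph{seminorm} $[f]_{s,p,\alpha,F_\lambda}^{a}$; all the genuine analytic content, in particular the use of the hypotheses $\tau\gamma\le-2$ and $Q-2+\tau\gamma(p-2)\alpha/(ps)>0$ through Lemma \ref{lemma 3.5}, is already encapsulated in Proposition \ref{prop 4.1} and Corollary \ref{coro 2}.
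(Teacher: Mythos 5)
Your proposal is correct and follows essentially the same route as the paper's own proof: both reduce to Corollary \ref{coro 2} at the fixed unit scale applied to the dilated function minus its average, absorb the $L^p$ part of the full norm $\lVert\cdot\rVert_{s,p,\alpha}$ into the seminorm via the Poincar\'e-type estimate of \eqref{eq 3.6}--\eqref{eq 3.9}, and then collect the powers of $\lambda$ by a change of variables. The only (immaterial) difference is one of ordering: you perform the Poincar\'e and triangle-inequality absorptions at unit scale before rescaling, whereas the paper changes variables first and then applies the analogous bound \eqref{3.29} at scale $\lambda$.
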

\begin{proof}
For any function $f$ we define the function $\Tilde{f}(\xi)=f(D_\lambda\xi)$. Note that $\Tilde{f}\in C(\overline{A_{\frac{r}{2},2R}})$ if and only if $f\in C(\Bar{F}_\lambda)$. We have by Corollary \ref{coro 2},
\begin{multline*}
\left(\int_{A_{r,R}}d(\xi)^{\tau\gamma}|z|^{\tau\gamma(p-2)\frac{\alpha}{ps}}|\Tilde{f}-(f)_{F_\lambda}|^{\tau}\right)^{\frac{1}{\tau}}\\
\leq C \left(\lVert \Tilde{f}-(f)_{F_\lambda}\rVert_{L^p(A_{\frac{r}{2},2R})}^p+[\Tilde{f}]^p_{s,p,\alpha,A_{\frac{r}{2},2R}}\right)^{\frac{a}{p}}\left(\lVert \Tilde{f}-(f)_{F_\lambda}\rVert_{L^q(A_{\frac{r}{2},2R})}\right)^{1-a}.
\end{multline*}
A change of variable gives
\begin{multline}\label{3.28}
\left(\lambda^{-\tau\gamma\left(1+(p-2)\frac{\alpha}{ps}\right)}\fint_{A_\lambda}d(\xi)^{\tau\gamma}|z|^{\tau\gamma(p-2)\frac{\alpha}{ps}}|f-(f)_{F_\lambda}|^{\tau}\right)^{\frac{1}{\tau}}\\
\leq C\left(\fint_{F_\lambda}|f-(f)_{F_\lambda}|^p+\lambda^{ps+(p-2)\alpha-Q}[f]_{s,p,\alpha,F_\lambda}^p\right)^{\frac{a}{p}}\left(\fint_{F_\lambda}|f|^q\right)^\frac{1-a}{q}.
\end{multline}
One can observe that $d({\xi}^{-1}\circ \xi')\leq C\lambda$ for $\xi,\xi'\in F_\lambda$. A similar argument as for \eqref{eq 3.9} gives
\begin{equation}\label{3.29}
\fint_{F_\lambda}|f-(f)_{F_\lambda}|^p\leq C\lambda^{ps+(p-2)\alpha-Q}[f]_{s,p,\alpha,F_\lambda}^p.
\end{equation}
Inequality \eqref{3.26} follows from \eqref{3.28} and \eqref{3.29}.
\end{proof}
The above lemma is the main ingredient of the proof of Theorem \ref{frac CKN in H^n b}. The technique of proof is similar to that of Theorem \ref{frac CKN in H^n}.
\begin{proof}[\textbf{Proof of Theorem} \ref{frac CKN in H^n b}]
Let $f\in C_c^1(\HH^n)$. We choose $n_0\in \ZZ$ such that $\mathrm{supp}\,f\subset B_{2^{n_0+1}}(0)$. Let us consider the decomposition $\{A_k\}$ of $\HH^n$ defined in \eqref{decom b} and we define the sets $F_k:=A_{2^{k-1},2^{k+2}}(0)$. By Lemma \ref{lemma 3.7} and by the relation \eqref{tau gamma rel} we have
\begin{multline}\label{4.1}
\left(\fint_{A_k}d(\xi)^{\tau\gamma}|z|^{\tau\gamma(p-2)\frac{\alpha}{ps}}|f-(f)_{F_k}|^{\tau} d\xi\right)^{\frac{1}{\tau}}\\
\leq 
C2^{k\left(a\frac{ps+(p-2)\alpha-Q}{p}+\gamma\left(1+(p-2)\frac{\alpha}{ps}\right)\right)}[f]_{s,p,\alpha,F_k}^a\left(\frac{1}{2^{kQ}}\int_{F_k}|f|^{q} d\xi\right)^{\frac{1-a}{q}}\\
=C\,2^{-\frac{kQ}{\tau}}[f]_{s,p,\alpha,F_k}^a\lVert f\rVert_{L^q(F_k)}^{1-a}.
\end{multline}
Applying \eqref{4.1}, Lemma \ref{lemma 3.5} and the fact $|A_k|\sim|F_k|\sim2^{kQ}$, we obtain
\begin{multline}\label{4.2}
\int_{A_k}d(\xi)^{\tau\gamma}|z|^{\tau\gamma(p-2)\frac{\alpha}{ps}}|f|^{\tau}d\xi\\
\leq C2^{kQ}\fint_{A_k}d(\xi)^{\tau\gamma}|z|^{\tau\gamma(p-2)\frac{\alpha}{ps}}|f-(f)_{F_k}|^{\tau}+C2^{k\tau\gamma}|(f)_{F_k}|^{\tau}\int_{A_k}|z|^{\tau\gamma(p-2)\frac{\alpha}{ps}}d\xi\\
\leq C[f]_{s,p,\alpha,F_k}^{a\tau}\lVert f\rVert_{L^q(F_k)}^{(1-a)\tau}+C\,2^{k\left(Q+\tau\gamma\left(1+(p-2)\frac{\alpha}{ps}\right)\right)}|(f)_{F_k}|^{\tau}.
\end{multline}
Summation with respect to $k$ from $m$ to $n_0$, and inequality \eqref{3.24} gives
\begin{multline}\label{4.3}
\int_{d(\xi)>2^m}d(\xi)^{\tau\gamma}|z|^{\tau\gamma(p-2)\frac{\alpha}{ps}}|f|^{\tau}d\xi \\
\leq C\left(\sum_{k=m}^{n_0}[f]_{s,p,\alpha,F_k}^{p}\right)^{\frac{a\tau}{p}}\left(\sum_{k=m}^{n_0}\lVert f\rVert_{L^q(F_k)}^{q}\right)^{\frac{(1-a)\tau}{q}}+C\sum_{k=m}^{n_0}2^{k\left(Q+\tau\gamma\left(1+(p-2)\frac{\alpha}{ps}\right)\right)}|(f)_{F_k}|^{\tau}.
\end{multline}
Observe that $F_k\cup F_{k+1}$ is the set $A_{2^{k-1},2^{k+3}}(0)$. Let us define the set $G_k:=A_{2^{k-2},2^{k+4}}(0)$. Now
$$(f)_{F_k}-(f)_{F_{k+1}}=\frac{1}{|F_k|}\int_{F_k}\left(f-(f)_{G_k}\right)-\frac{1}{|F_{k+1}|}\int_{F_{k+1}}\left(f-(f)_{G_k}\right).$$
Since $|{F_k}|\sim|F_{k+1}|$ and $|z|\leq d(\xi)\leq 2^{k+3}$ in $F_k\cup F_{k+1}$, we have
\begin{multline*}
|(f)_{F_k}-(f)_{F_{k+1}}|\leq \frac{C}{|{F_k}\cup F_{k+1}|}\int_{F_k\cup F_{k+1}}\left|f-(f)_{G_k}\right|\\
\leq C\left(\fint_{F_k\cup F_{k+1}}\left|f-(f)_{G_k}\right|^\tau\right)^{\frac{1}{\tau}}\\
\leq C2^{-k\gamma\left(1+(p-2)\frac{\alpha}{ps}\right)}\left(\fint_{F_k\cup F_{k+1}}d(\xi)^{\tau\gamma}|z|^{\tau\gamma(p-2)\frac{\alpha}{ps}}\left|f-(f)_{G_k}\right|^\tau\right)^{\frac{1}{\tau}}.
\end{multline*}
Since $F_k\cup F_{k+1}\Subset G_k$, a similar argument as \eqref{4.1} implies
\begin{equation}\label{4.4}
|(f)_{F_k}-(f)_{F_{k+1}}|\leq C2^{-\frac{k}{\tau}\left(Q+\tau\gamma\left(1+(p-2)\frac{\alpha}{ps}\right)\right)}[f]_{s,p,\alpha,G_k}^a\lVert f\rVert_{L^q(G_k)}^{(1-a)}.
\end{equation}
Let $\delta=2/(1+2^{Q'})<1$, where $Q'=Q+\tau\gamma\left(1+(p-2)\frac{\alpha}{ps}\right)$. Applying Lemma \ref{lemma 4.1} with $c=\delta2^{Q'}$ we obtain
$$|(f)_{F_k}|^\tau\leq\delta2^{Q'}|(f)_{F_{k+1}}|^\tau+C2^{-kQ'}[f]_{s,p,\alpha,G_k}^{a\tau}\lVert f\rVert_{L^q(G_k)}^{(1-a)\tau},$$
which implies
$$2^{kQ'}|(f)_{F_k}|^\tau\leq\delta2^{(k+1)Q'}|(f)_{F_{k+1}}|^\tau +C[f]_{s,p,\alpha,G_k}^{a\tau}\lVert f\rVert_{L^q(G_k)}^{(1-a)\tau}.$$
Summation with respect to $k$ from $m$ to $n_0$ gives
$$2^{mQ'}|(f)_{F_m}|^\tau+(1-\delta)\sum_{k=m+1}^{n_0}2^{kQ'}|(f)_{F_k}|^\tau\leq C\sum_{k=m}^{n_0}[f]_{s,p,\alpha,G_k}^{a\tau}\lVert f\rVert_{L^q(G_k)}^{(1-a)\tau},$$
and it follows from \eqref{3.24}, 
\begin{equation}\label{4.5}
(1-\delta)\sum_{k=m}^{n_0}2^{kQ'}|(f)_{F_k}|^\tau\leq C\left(\sum_{k=m}^{n_0}[f]_{s,p,\alpha,G_k}^{p}\right)^{\frac{a\tau}{p}}\left(\sum_{k=m}^{n_0}\lVert f\rVert_{L^q(G_k)}^{q}\right)^{\frac{(1-a)\tau}{q}}.
\end{equation}
Inequality \eqref{eq frac CKN in H^n i} follows from \eqref{4.3} and \eqref{4.5} and from the facts
\begin{equation}\label{overlap}
\sum_{k=m}^{n_0}\int_{F_k}f(\xi)d\xi\leq3\int_{\HH^n}f(\xi)d\xi\ \ \mathrm{and}\ \sum_{k=m}^{n_0}\int_{G_k}f(\xi)d\xi\leq5\int_{\HH^n}f(\xi)d\xi
\end{equation}
for any non-negative function $f$.
\end{proof}
\begin{remark}
\emph{If $Q-ps-(p-2)\alpha>0$ and $\tau\gamma<-2$, Theorem \ref{frac CKN in H^n b} can be proved directly by the technique used in \cite{Adi-Mallick} to prove Theorem \ref{frac-har in H^n} as follows:\\
For $0\leq f\in C_c(\Omega)$ and for $i \in \ZZ,$ let us define the sets:
$$E_i:=\{\xi\in\HH^n: f(\xi)\geq2^i\},$$
$$D_i:=E_i\setminus E_{i+1}=\{\xi\in\HH^n: 2^i\leq f(\xi)< 2^{i+1}\},$$
and let $e_i=|E_i|$ and $d_i=|D_i|$. One can show that (see \cite{Adi-Mallick}, Section 5)
$$J:=\int_{\HH^n}d(\xi)^{\tau\gamma}|z|^{\tau\gamma(p-2)\frac{\alpha}{ps}}|f(\xi)|^{\tau}d\xi\leq C\sum_{i\in\ZZ}2^{\tau i}d_i^{\frac{Q+\tau\left(1+\gamma(p-2)\frac{\alpha}{ps}\right)}{Q}}.$$
By the relation \eqref{tau gamma rel} and inequality \eqref{3.24} it follows that
\begin{equation}\label{4.29}
J\leq C\left(\sum_{i\in\ZZ}2^{pi}d_i^{\frac{Q-ps-(p-2)\alpha}{Q}}\right)^{\frac{a\tau}{p}}\left(\sum_{i\in\ZZ}2^{qi}d_i\right)^{\frac{(1-a)\tau}{q}}.
\end{equation}
On the other hand, we have
\begin{equation}\label{4.30}
[f]_{s,p,\alpha}^p\geq C\sum_{i\in\ZZ}2^{pi}e_i^{\frac{Q-ps-(p-2)\alpha}{Q}}\geq C\sum_{i\in\ZZ}2^{pi}d_i^{\frac{Q-ps-(p-2)\alpha}{Q}}.
\end{equation}
and
\begin{equation}\label{4.31}
\int_{\Omega}|f(\xi)|^q d\xi=\sum_{i\in\ZZ}\int_{D_i}|f(\xi)|^q d\xi\geq\sum_{i\in\ZZ}2^{qi}d_i.
\end{equation}
Inequality \eqref{4.29}, \eqref{4.30} and \eqref{4.31} together establish \eqref{eq frac CKN in H^n iii} for the case $Q-ps-(p-2)\alpha>0$ along with $\tau\gamma<-2$.}
\end{remark}

\section{Proof of the limiting case}
Before proving Theorem \ref{theorem limiting case} we discuss the following remark:
\begin{remark}
\emph{In the proof of the limiting case $1/\tau+\gamma\left(1+(p-2)\frac{\alpha}{ps}\right)/{Q}=0$ we will be using Lemma \ref{lemma 3.7}. Though the condition $\tau\gamma\leq-2$ is crucial for this, it automatically follows from the assumption \eqref{eq 1.15} and $(p-2)\alpha<Q-2$ as follows:\\
Relation \eqref{tau gamma rel} gives
$$\frac{1}{\tau}=a\left(\frac{Q-(1+\gamma/as)(ps+(p-2)\alpha)}{pQ}\right)+\frac{1-a}{q}.$$
We have
\begin{multline*}
\frac{aps}{-\tau\gamma}=a\left(\frac{Q-(1+\gamma/as)(ps+(p-2)\alpha)}{(-\gamma/as)Q}\right)+\frac{1-a}{(-\gamma/as)}\frac{p}{q}\\
=a+(1-a)\frac{p}{q}+\frac{p(1+\gamma/as)}{(-\gamma/as)}\left[a\left(\frac{Q-ps-(p-2)\alpha}{pQ}\right)+\frac{1-a}{q}\right]\\
=a+(1-a)\frac{p}{q}+\frac{p(1+\gamma/as)}{(-\gamma/as)}\left[\frac{Q+\tau\gamma\left(1+(p-2)\frac{\alpha}{ps}\right)}{\tau Q}\right].
\end{multline*}
Assumption \eqref{eq 1.15} implies $\frac{aps}{-\tau\gamma}\geq a$, i.e., $-\tau\gamma\leq ps$. Therefore we have $Q-2+\tau\gamma(p-2)\frac{\alpha}{ps}>Q-2-(p-2)\alpha>0$. This together with \eqref{eq 1.15} implies $\tau\gamma<-2$.}
\end{remark}

\begin{proof}[\textbf{Proof of Theorem} \ref{theorem limiting case}]
Let $A_k$, $F_k$ and $G_k$ be as defined earlier. Choose $n_0\in\ZZ$ such that $2^{n_0}<R\leq 2^{n_0+1}$. From equation \eqref{4.2} we get
\begin{equation}\label{eq 4.1}
\int_{A_k}d(\xi)^{\tau\gamma}|z|^{\tau\gamma(p-2)\frac{\alpha}{p s}}|f|^{\tau}d\xi\leq C[f]_{s,p,\alpha,F_k}^{a\tau}\lVert f\rVert_{L^q(F_k)}^{(1-a)\tau}+C|(f)_{F_k}|^{\tau}.
\end{equation}
Observe that, $\ln(4R/d(\xi))\geq (n_0-k+1)\ln2,\,$ in $A_{k}$. Summing with respect to $k$ and using \eqref{3.24} and \eqref{overlap} we obtain
\begin{equation}\label{eq 4.3}
\int_{d(\xi)>2^m}\frac{d(\xi)^{\tau\gamma}|z|^{\tau\gamma(p-2)\frac{\alpha}{ps}}}{\ln^{\tau}(4R/d(\xi))}|f|^{\tau}d\xi\leq C[f]_{s,p,\alpha}^{a\tau}\lVert f\rVert_{L^q(\HH^n)}^{(1-a)\tau}+C\sum_{k=m}^{n_0}\frac{|(f)_{F_k}|^{\tau}}{(n_0-k+1)^\tau}.
\end{equation}
Here we use $\frac{1}{(n_0-k+1)^\tau}\leq1$ in the first term of right hand side. We now apply Lemma \ref{lemma 4.1} in \eqref{4.4} with $c=\left(\frac{n_0-k+1}{n_0-k+\frac{1}{2}}\right)^{\tau-1}$ and obtain
$$|(f)_{F_{k}}|^\tau\leq \frac{(n_0-k+1)^{\tau-1}}{(n_0-k+\frac{1}{2})^{\tau-1}}|(f)_{F_{k+1}}|^{\tau}+2^{\tau-1}(n_0-k+1)^{\tau-1}[f]_{s,p,\alpha,G_k}^{a\tau}\lVert f\rVert_{L^q(G_k)}^{(1-a)\tau},$$
which implies
$$\frac{|(f)_{F_{k}}|^\tau}{(n_0-k+1)^{\tau-1}}-\frac{|(f)_{F_{k+1}}|^{\tau}}{(n_0-k+\frac{1}{2})^{\tau-1}}\leq C[f]_{s,p,\alpha,G_k}^{a\tau}\lVert f\rVert_{L^q(G_k)}^{(1-a)\tau}.$$
Summation with respect to $k$ gives
\begin{equation}
\sum_{k=m}^{n_0}\frac{|(f)_{F_{k}}|^\tau}{(n_0-k+1)^{\tau-1}}-\sum_{k=m}^{n_0}\frac{|(f)_{F_{k+1}}|^{\tau}}{(n_0-k+\frac{1}{2})^{\tau-1}}\leq C[f]_{s,p,\alpha}^{a\tau}\lVert f\rVert_{L^q(\HH^n)}^{(1-a)\tau}.
\end{equation}
On the R.H.S we use \eqref{3.24} and \eqref{overlap}. Using a change of variable in the second sum of L.H.S, we obtain
\begin{multline}
\frac{|(f)_{F_{m}}|^\tau}{(n_0-m+1)^{\tau-1}}+\sum_{k=m+1}^{n_0}\left[\frac{1}{(n_0-k+1)^{\tau-1}}-\frac{1}{(n_0-k+\frac{3}{2})^{\tau-1}}\right]|(f)_{F_{k}}|^\tau\\
\leq C[f]_{s,p,\alpha}^{a\tau}\lVert f\rVert_{L^q(\HH^n)}^{(1-a)\tau}.
\end{multline}
One can observe that $\frac{1}{(n_0-k+1)^{\tau-1}}-\frac{1}{(n_0-k+\frac{3}{2})^{\tau-1}}\sim\frac{1}{(n_0-k+1)^{\tau}}$. Hence
\begin{equation}\label{eq 4.8}
\sum_{k=m}^{n_0}\frac{|(f)_{F_{k}}|^\tau}{(n_0-k+1)^{\tau}}\leq C[f]_{s,p,\alpha}^{a\tau}\lVert f\rVert_{L^q(\HH^n)}^{(1-a)\tau}.
\end{equation}
Inequality \eqref{eq limiting case} follows from \eqref{eq 4.3} and \eqref{eq 4.8} and by passing to the limit $m\rightarrow-\infty$.\\
One can prove \eqref{eq limiting case b} by a similar technique used in the proof of  \eqref{eq frac CKN in H^n ii} and \eqref{eq limiting case}.
\end{proof}
\textbf{Acknowledgements:} 
This work is  part of doctoral thesis of the second author. He is grateful for the support provided by IIT Kanpur, India and MHRD, Government of India (GATE fellowship). 
The third author acknowledges the support provided by SERB through the Core Research Grant (CRG/2022/007867).

\end{document}